\newif\iflncs
\newif\ifnotes\notesfalse
\newtheorem{theorem}{Theorem}[section]
\newtheorem{corollary}[theorem]{Corollary}
\newtheorem{lemma}[theorem]{Lemma}
\newtheorem{Claim}[theorem]{Claim}
\newcommand{\bb}{\mathbb}
\newcommand{\R}{\bb R}
\newcommand{\Z}{\bb Z}
\newcommand{\E}{\bb E}
\newcommand{\B}{\bb B}
\newcommand{\ceil}[1]{\lceil#1\rceil}
\newcommand{\sm}{\setminus}
\newcommand{\smz}{\setminus\{0\}}
\newcommand{\scalar}[1]{\left\langle #1\right\rangle}
\newcommand{\set}[1]{\left\{ #1 \right\}}
\newcommand{\spn}{\mathrm{span}}
\newcommand{\supp}{\mathrm{supp}}
\newcommand{\T}{\mathsf{T}}
\newcommand{\bart}{\theta}
\newcommand{\qued}{}
\DeclareMathOperator{\width}{\mathrm{width}}
\DeclareMathOperator{\conv}{\mathrm{conv}}
\DeclareMathOperator{\rk}{rk}
\DeclareMathOperator{\im}{im}
\DeclareMathOperator{\tr}{tr}
\DeclareMathOperator{\vol}{vol}
\def\const{\frac 1{11m}}  %%%%% This can be improved to 1/2m
\def\st{\,:\,}
\def\eps{\varepsilon}
\newcommand{\eqdef}{\mathbin{\stackrel{\rm def}{=}}}
\newcommand\ourpagewidth{.9\textwidth}
\newcommand{\notename}[2]{{\textcolor{red}{\footnotesize{\bf (#1:} {#2}{\bf ) }}}}
\newcommand{\gnote}[1]{{\notename{Giacomo}{#1}}}
\newcommand{\lnote}[1]{{\notename{Laci}{#1}}}
\newcommand{\dnote}[1]{{\notename{Daniel}{#1}}}
\newcommand{\notename}[2]{{}}
\newcommand{\gnote}[1]{}
\newcommand{\lnote}[1]{}
\newcommand{\dnote}[1]{}
\begin{document}

\title{Rescaling Algorithms for Linear Conic Feasibility\thanks{Partly based on the
  paper ``Rescaled coordinate descent methods for Linear Programming''
  presented at the  18th Conference on Integer Programming
and Combinatorial Optimization (IPCO 2016) \cite{IPCO-version}.}}
\author{
Daniel Dadush\thanks{Centrum Wiskunde \& Informatica. Supported by NWO Veni grant 639.071.510.}\\
\texttt{dadush@cwi.nl}
\and
L\'aszl\'o A. V\'egh\thanks{London School of Economics.} \thanks{Supported by EPSRC First Grant EP/M02797X/1.} \\
\texttt{l.vegh@lse.ac.uk}
\and
Giacomo Zambelli~\footnotemark[3] \\
\texttt{g.zambelli@lse.ac.uk}
}

\date{}

\maketitle

\abstract{We propose simple polynomial-time algorithms for two linear conic
  feasibility problems.
For a matrix $A\in \R^{m\times n}$, the {\em kernel problem} requires a positive vector in the
kernel of $A$, and the {\em image problem} requires a positive vector in the
image of $A^\T$.
 Both algorithms iterate between simple first order steps and rescaling steps. These rescalings improve natural geometric
potentials.
If Goffin's condition measure $\rho_A$ is
negative, then the kernel problem is feasible and the worst-case complexity of the kernel algorithm is
$O\left((m^3n+mn^2)\log{|\rho_A|^{-1}}\right)$; if $\rho_A>0$, then the image problem is feasible and
the image
algorithm runs in time
$O\left(m^2n^2\log{\rho_A^{-1}}\right)$. We also extend the image
algorithm to the oracle setting.

We address the degenerate case $\rho_A=0$ by extending our algorithms to find
maximum support nonnegative vectors in the kernel of $A$ and in the
image of $A^\top$. In this case the running time bounds are expressed in the bit-size model of computation: for an input matrix $A$ with integer entries and total encoding
length $L$, the maximum support kernel algorithm runs in time $O\left((m^3n+mn^2)L\right)$, while the  maximum support image
algorithm runs in time $O\left(m^2n^2L\right)$. The standard linear
programming feasibility problem can be easily reduced to either maximum
support problems, yielding polynomial-time algorithms for Linear Programming.}

\section{Introduction}\label{sec:intro}
We consider two fundamental linear conic
feasibility problems for an $m\times n$ matrix $A$. In the {\em kernel problem} the
goal is to find a positive vector in $\ker(A)$, whereas in the
{\em image problem} the goal is to find a positive vector in
$\im(A^\T)$. These can be formulated by the following feasibility problems.
 \noindent\begin{minipage}{.5\linewidth}
\begin{equation}\label{eq:main problem}\tag{$K_{++}$}
\begin{aligned}%
Ax&=0\\ x&>0
\end{aligned}
\end{equation}
\end{minipage}%
\noindent\begin{minipage}{.5\linewidth}
\begin{equation}\tag{$I_{++}$}\label{eq:main dual}
\begin{aligned}%
A^\T y>0
\end{aligned}
\end{equation}
\end{minipage}

We present simple polynomial-time algorithms for
the kernel problem  \eqref{eq:main problem}  and the image problem
\eqref{eq:main dual}. Both algorithms combine a first
order method with a geometric rescaling, which improve natural volumetric potentials.

The algorithms we propose fit into a line of research developed over the past 15
years
\cite{Basu-DeLoera-Junod,belloni,Betke,Chubanov-binary,Chubanov-s-poly,Chubanov-new,Dunagan-Vempala,li2015,penanew,Pena-Soheili,Roos,Vegh-Zambelli}.
These are polynomial algorithms for Linear Programming that combine simple
iterative updates, such as variants of perceptron \cite{Rosenblatt} or of the
relaxation method~\cite{Agmon,Motzin-Shoenberg}, with some form of geometric
rescaling.

\medskip

 Problems
\eqref{eq:main problem}  and \eqref{eq:main dual} have the following
natural geometric interpretations.
Let $a_1,\ldots,a_n$ denote the columns of the matrix $A$.
A feasible
solution to the \eqref{eq:main problem} means that $0$
is in the relative interior of the convex hull of the columns $a_i$, whereas a
feasible solution to
\eqref{eq:main dual} gives a hyperplane that strictly separates 0 from
the convex hull. We measure the running time of algorithms for
\eqref{eq:main problem} and \eqref{eq:main dual} in terms of Goffin's condition measure $\rho_A$,
where $|\rho_A|$ is the distance of 0 from the relative boundary of the convex hull of the vectors
$a_i/\|a_i\|$, $i\in[n]$. If
$\rho_A<0$, then \eqref{eq:main problem}  is feasible, if $\rho_A>0$, then
\eqref{eq:main dual} is feasible.

In case $\rho_A=0$, $0$ falls on the relative boundary of the convex hull of the
$a_i$'s, and  both problems  \eqref{eq:main problem}  and
\eqref{eq:main dual} are infeasible. We extend our kernel and image algorithms to
finding {\em maximum
support nonnegative vectors} in $\ker(A)$  and in $\im(A^\T)$, respectively. Geometrically, these amount to identifying
the face of the convex hull that contains $0$ in its relative interior. By
strong duality, the two maximum supports are complementary to each other.
%Running times, in this setting, will be expressed in the bit-size
%model of computation, that is, in terms of the encoding size $L$ of an input matrix $A$ with integer components.
The maximum support problems provides
fine-grained structural information on LP, and are crucial for exact LP
algorithms (see e.g. \cite{vavasis1995}). With either the
maximum support kernel or maximum support image algorithm, one can
solve
a general LP feasibility problem of the form $Ax\le b$ via
simple homogenization.
While LP feasibility (and thus LP optimization) can also
be reduced either to \eqref{eq:main problem} or to \eqref{eq:main dual} via
standard perturbation methods (see for example~\cite{Schrijver}), this is not
desirable for numerical stability.  Furthermore, we recall that the reduction from
LP optimization to feasibility creates degenerate (i.e.~non full-dimensional) systems by construction, and
hence in this sense most ``interesting'' LP feasibility problems are
degenerate.

\paragraph{Previous work}
We give a brief overview of geometric rescaling algorithms that
combine first order iterations and rescalings.
The first such algorithms were given by Betke
\cite{Betke} and by Dunagan and Vempala \cite{Dunagan-Vempala}. Both
papers address the problem \eqref{eq:main dual}. The deterministic algorithm of
Betke \cite{Betke} combines a variant of Wolfe's algorithm with a rank-one
update to the matrix $A$. Progress is measured by showing that the
spherical volume of the cone $A^\top y\ge 0$ increases by a constant factor at each rescaling. This approach
was further improved by Soheili and Pe\~na
\cite{Pena-Soheili} using different first order methods, in
particular, a smoothed perceptron algorithm
\cite{nesterov05,Pena-Soheili-smooth}.
Dunagan and Vempala \cite{Dunagan-Vempala} give a randomized
algorithm, combining two different first order methods. They also use
a rank-one update, but a different one from \cite{Betke}, and can show
progress directly in terms of Goffin's condition measure $\rho_A$.
The problem \eqref{eq:main dual} has been also studied in the more
general oracle setting \cite{belloni,Chubanov-oracle,penanew,Pena-Soheili}.

For \eqref{eq:main problem}, as well as for the maximum support version, a rescaling algorithm was given by
Chubanov \cite{Chubanov-new}, see also \cite{li2015,Roos}. A main iteration
of the algorithm concludes that in the system $Ax=0, 0\le x\le
\vec{e}$, one can identify at least one index $i$ such that $x_i\le
\frac12$ must hold for every solution. Hence
the rescaling multiplies $A$ from the right hand side by a diagonal
matrix.
(This is in contrast to the above mentioned algorithms,
where rescaling multiplies the matrix $A$ from the left hand side.)
The first order iterations are von Neumann steps on the system defined
by the projection matrix.

The algorithm \cite{Chubanov-new} builds on previous work by Chubanov
on binary integer programs and linear feasibility
\cite{Chubanov-binary,Chubanov-old-LP}, see also \cite{Basu-DeLoera-Junod}. A more
efficient variant of this algorithm was given in
\cite{Vegh-Zambelli}. These algorithms use a similar rescaling, but
for a non-homogeneous linear system, and the first order iterations
are variants of the relaxation method.

\paragraph{Our contributions}
We introduce new algorithms for \eqref{eq:main problem} and
\eqref{eq:main dual} as well as for their maximum support
versions, and improve on the running time bounds of the previous best geometric  rescaling
algorithm running time bounds in each of the settings.

For the {\em kernel problem}, that is, if $\rho_A<0$, we present a
simple new algorithm whose running time analysis is based on a new
volumetric potential, that serves as a proxy for $|\rho_A|$. In
contrast, \cite{Chubanov-new}  in essence reduces the problem to the
image setting. Using a direct volumetric argument enables us to obtain
a better running time in $O\left((m^3n+mn^2)\log{|\rho_A|^{-1}}\right)$ arithmetic operations.

For the {\em image problem}, that is, if  $\rho_A>0$, our new
algorithm is an enhanced version of Betke's
\cite{Betke} and Pe\~na and Soheili's \cite{Pena-Soheili}
algorithms. In contrast to rank-1 updates used in these
papers, we use higher rank updates.  The running
time of our algorithm is
$O\left(m^2n^2\log{\rho_A^{-1}}\right)$. This can be
improved to $O\left(m^3n\sqrt{\log n}\cdot \log{\rho_A^{-1}}\right)$
using smoothing techniques \cite{nesterov05,Pena-Soheili-smooth,Yu-Karzan-Carbonell}.
We also present an
extension of our algorithm for the case when the interior of a cone $\Sigma$
expressed by a separation oracle; the algorithm requires
$O\left(m^3\log{\rho_\Sigma^{-1}}\right)$ oracle calls and
$O\left(m^5\log{\rho_\Sigma^{-1}}\right)$  arithmetic operations,
where $\rho_\Sigma$ is the cone width. This oracle variant was
used in~\cite{submod} to develop new polynomial and strongly polynomial
algorithms for submodular function minimization.

We can obtain algorithms for the {\em maximum support versions} in both
settings by repeatedly applying the full support algorithm, and
observing the rescaled length of the column vectors. We show that if a
column vector becomes too long in the kernel setting (or too short in
the image setting) after a number of
rescalings, then it cannot be
contained in the maximum support. Thus, we can remove such vectors and
restart the algorithm.
For the maximum support image
problem, we obtain the  first rescaling algorithm.

These algorithms offer a particularly simple approach for
degenerate problems, even though these typically require substantial additional effort compared to the full
dimensional setting. For example, in the
ellipsoid method, the simultaneous Diophantine approximation technique
is used~\cite{glsbook}.
For interior point methods,
degeneracy must be dealt with both in the initialization phase, to set up a full
dimensional auxilliary system, and in the termination phase, to round to the optimal
face.

The full support algorithms can be implemented in
the {\em real model of computation}
\cite{blum1989} and the algorithms do not require an estimation of the
value of $|\rho_A|$. In contrast, for the maximum support algorithms, we need bounds
on the bit-complexity of the input to determine the threshold for
removing columns. Thus, we assume that
the input matrix $A$ is integer of total  encoding length
$L$. In this setting,  $|\rho_A|\ge 2^{-O(L)}$ whenever $\rho_A\neq 0$. This provides running time bounds
$O\left((m^3n+mn^2)\cdot L\right)$ for the full support kernel algorithm, and
$O\left(m^3n\sqrt{\log n}\cdot L\right)$ for the full support image
algorithm in the bit-complexity model. For the maximum support variants, the above described
column elimination method requires $n$ calls to the full support
algorithm in the kernel setting and $m$ calls in the image
setting. In the Appendix, we present improved versions for the maximum support variants of both
the kernel and image problems that can be implemented in the same
asymptotic complexity as the respective full support variants.
A summary of running times is given in
Table~\ref{table:running-time}.
% note that $\log |\rho_A|^{-1}=O(L)$.

\begin{table}[htb]
  \centering
  \begin{tabular}{|l|l|}
\hline
    \multicolumn{2}{|c|}{Kernel problem} \\
\hline
 Full support &   Maximum support  \\
\hline
$O(n^{18+3\varepsilon} \cdot L^{12+2\varepsilon} )$
    \cite{Chubanov-old-LP,Basu-DeLoera-Junod} &   \\
$O([n^5/\log n]\cdot L)$ \cite{Vegh-Zambelli} &  \\
   $O(n^4 \cdot L)$ \cite{Chubanov-new} & $O(n^4 \cdot L)$ \cite{Chubanov-new} \\
$O\left((m^3n+mn^2) \cdot \log |\rho_A|^{-1}\right)$  this paper &
                                                                 $O((m^3n+mn^2)    \cdot L) $
                                                                 this
                                                                 paper\\
\hline
\multicolumn{2}{c}{} \\
\hline
\multicolumn{2}{|c|}{Image problem} \\
\hline
 Full support &   Maximum support \\
\hline
$O(m^3n^3 \cdot \log \rho_A^{-1})$ \cite{Betke}& \\
$O(m^4 n\log m \cdot \log \rho_A^{-1})$\cite{Dunagan-Vempala}& \\
$O\left(m^{2.5}n^2\sqrt{\log n}\cdot \log \rho_A^{-1}\right)$ \cite{Pena-Soheili} & \\
$O\left(m^3n\sqrt{\log n}\cdot \log \rho_A^{-1}\right)$  this paper &  $O\left(m^3n\sqrt{\log
                                                n}\cdot L\right)$
                                                this paper\\
\hline
Full support oracle model& \\
\hline
%??? \cite{belloni} & \\
$O(({\rm SO} \cdot m^5 + m^6) \rho_\Sigma^{-1})$ \cite{Pena-Soheili} & \\
$O(({\rm SO} \cdot m^4 + m^6) \rho_\Sigma^{-1})$ \cite{Chubanov-oracle} & \\
$O(({\rm SO} \cdot m^3 + m^5) \rho_\Sigma^{-1})$ this paper & \\
\hline
  \end{tabular}
  \caption{Running time of geometric rescaling algorithms. In the
    oracle setting,  $\rm SO$ is the complexity of a separation oracle call.}
  \label{table:running-time}
\end{table}

The full support kernel algorithm  was first presented in the conference version
\cite{IPCO-version}. The image algorithm and the maximum support variants for
both the kernel and dual problems are new in this paper.  The full support image
algorithm was also independently obtained by Hoberg and Rothvo{\ss}
\cite{rothvoss}.

The rest of the paper is structured as
follows. Section~\ref{sec:prelim} introduces notation and important
concepts.
Section~\ref{sec:coord-desc-overview} briefly surveys relevant first
order methods. In Sections~\ref{sec:kernel-full} and~\ref{sec:image-full} we give the algorithms for
the full support problems \eqref{eq:main problem} and \eqref{eq:main dual}, respectively. These are extended in Section~\ref{sec:max support} to
the maximum support cases. Variants with improved running times are
given in Appendix~\ref{sec:app-max}.

%%% Local Variables:
%%% mode: latex
%%% TeX-master: "Coordinate_descent"
%%% End:

\subsection{Notation and preliminaries}\label{sec:prelim}

For a natural number $n$, let $[n]=\{1,2,\ldots,n\}$.
For a subset $X\subseteq [n]$, we let
$A_X\in \R^{m\times|X|}$ denote the submatrix formed by the columns of $A$
indexed by $X$. For any non-zero vector $v\in \R^m$ we denote by $\hat v$ the normal vector in the
direction of $v$, that is,
\[\hat v\eqdef \frac{v}{\|v\|}.\]
By convention, we also define $\hat{0} = 0$.
We let $\hat A\eqdef [\hat
a_1,\ldots,\hat a_n]$. Note that, given $v,w\in\R^m\sm\{0\}$, $\hat v^\T   \hat w$ is
the cosine of the angle between them. Given a vector $x\in\R^n$, the {\em support of $x$} is the subset of $[n]$ defined by $\supp(x)\eqdef \{i\in[n]\st x_i\neq 0\}$.

\medskip

Let $\R^n_+=\{x\in \R^n: x\ge 0\}$ and $\R^n_{++}=\{x\in \R^n: x> 0\}$
denote the set of nonnegative and  positive vectors in $\R^n$, respectively.
For any set $H\subseteq \R^n$, we let $H_+\eqdef H\cap\R^n_+$ and
$H_{++}\eqdef  H\cap \R^n_{++}$. These notations will be used in particular for the
kernel and image spaces
\[
\ker(A)\eqdef\{x\in\R^n\st Ax=0\},\quad
\im(A^\T)\eqdef\{A^\T y\st y\in \R^m\}\text{.}
\]
Clearly, $\im(A^\T)=\ker(A)^\bot$. Using this notation, \eqref{eq:main problem} is the problem of finding
a point in $\ker(A)_{++}$, and  \eqref{eq:main dual} amounts to finding
a point in $\im(A^\T)_{++}$. By strong duality, \eqref{eq:main problem}  is feasible if
and only if $\im(A^\T)_+=\{0\}$, that is,
\begin{equation}\label{eq:dual}
A^\T  y\geq 0,\tag{$I$}
\end{equation}
has no solution other than $y\in \ker(A^\T)$. Similarly, \eqref{eq:main dual} is
feasible if and only if $\ker(A)_+=\{0\}$, that is,
\begin{equation}\label{eq:primal}
\tag{$K$}
\begin{aligned}%
Ax&=0\\ x&\ge0
\end{aligned}
\end{equation}
has no solution other than $x=0$. Let us define

\[\Sigma_A\eqdef\{y\in \R^m: A^\T y\ge 0\}\] as the set of solutions to
\eqref{eq:dual}.

\medskip

Let  $I_d$ denote the $d$ dimensional identity matrix.
Let $\vec{e}_j$ denote the $j$th unit
vector, and $\vec{e}$ denote the all-ones vector of appropriate dimension
(depending on the context).
We denote by $\B^d$ the unit ball centered at the origin in $\R^d$, and let
$\nu_d=\vol(\B^d)$.

Given any set $C$ contained in $\R^d$, we denote by $\spn(C)$ the
linear subspace of $\R^d$ spanned by the elements of $C$. If $C\subseteq \R^d$ has affine dimension $r$, we denote by $\vol_r(C)$ the $r$-dimensional volume of $C$.

\paragraph{Projection matrices} For any matrix $A\in\R^{m\times n}$, we denote by $\Pi^I_A$ the {\em
  orthogonal projection matrix} to $\im(A^\T)$, and $\Pi^K_A$ as the
orthogonal projection matrix to $\ker(A)$. We recall that
\[\Pi^I_A=A^\T(AA^\T)^{+} A,\quad \Pi^K_A=I_n-A^\T(AA^\T)^{+} A,\]
 where $(\cdot)^+$ denotes the Moore-Penrose
pseudo-inverse. Note that, in order to compute $\Pi_A^I$ and $\Pi_A^K$, one does not
need to compute the pseudo-inverse of $AA^\T$; instead, if we let $B$
be a matrix comprised by $\rk(A)$ many linearly independent rows of
$A$, then $\Pi^I_A=B^\T(BB^\T)^{-1}B$, which can be computed in
$O(n^2m)$ arithmetic operations.

\paragraph{Scalar products}
We will often need to use scalar products and norms other than the
Euclidean ones. Let $\bb{S}^d_+$ and $\bb{S}^d_{++}$ be the sets of symmetric $d\times d$ positive semidefinite and positive definite matrices, respectively. Given   $Q\in \bb{S}^d_{++}$, we  denote by $Q^{\frac{1}{2}}$ the {\em square root} of $Q$, that is, the unique matrix in $\bb{S}^d_{++}$ such that $Q=Q^{\frac{1}{2}}Q^{\frac{1}{2}}$, and by $Q^{-\frac{1}{2}}$ the inverse of $Q^{\frac{1}{2}}$.
For $Q\in \bb{S}^d_{++}$ and two vectors $v,w\in \R^d$, we let
\[
\scalar{v,w}_Q \eqdef v^\top Q w,\quad \|v\|_Q \eqdef\sqrt{\scalar{v,v}_Q}.
\]
These define a scalar product and a norm over $\R^d$. We use
$\|\cdot\|_1$ for the $L^1$-norm and $\|\cdot\|_2$ for the Euclidean norm.
 When there is no risk of confusion we will simply
write $\|\cdot\|$ for $\|\cdot\|_2$.
Further, for any $Q\in \bb{S}^d_{++}$, we define the ellipsoid
\[
E(Q) \eqdef\{z\in \R^d\st \|z\|_Q\le 1\},
\]
and we recall that $E(Q)=Q^{-\frac{1}{2}}\bb{B}^d$ and $\vol(E(Q))=\det(Q)^{-\frac 12}\nu_d$.

We will use the following simple properties of matrix traces.
\begin{lemma}\label{lem:linalg} For any $X\in \bb{S}^{d}_+$,
\begin{enumerate}[(i)]
\item $\det(I_d+X)\ge 1+\tr(X)$.\label{det-trace-1}
\item   $\det(X)^{1/m}\le \tr(X)/m$.\label{det-trace-2}
\end{enumerate}
\end{lemma}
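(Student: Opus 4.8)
The plan is to prove both inequalities using the eigenvalues of $X$. Since $X \in \bb{S}^d_+$, it has nonnegative real eigenvalues $\lambda_1, \ldots, \lambda_d \ge 0$, and I can diagonalize $X = U^\T \Lambda U$ for some orthogonal $U$; both $\det$ and $\tr$ are invariant under this change of basis, and $I_d + X$ has eigenvalues $1 + \lambda_i$, so it suffices to work directly with the $\lambda_i$.

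\textbf{Part (i).} For the first inequality, I would expand the product:
\[
\det(I_d + X) = \prod_{i=1}^d (1 + \lambda_i) = 1 + \sum_{i=1}^d \lambda_i + \sum_{i<j} \lambda_i\lambda_j + \cdots \ge 1 + \sum_{i=1}^d \lambda_i = 1 + \tr(X),
\]
where all the omitted terms in the expansion are products of nonnegative numbers, hence nonnegative. This is the easy half. (Note the statement uses $I_d$, so the dimension here is $d$, not $m$.)

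\textbf{Part (ii).} For the second inequality, I would apply the AM--GM inequality to the eigenvalues: $\det(X)^{1/m} = \left(\prod_{i=1}^d \lambda_i\right)^{1/m}$ and $\tr(X)/m = \frac{1}{m}\sum_{i=1}^d \lambda_i$. Here I should be careful about the interplay between $d$ and $m$: the cleanest reading, matching the intended application where $X$ is an $m\times m$ PSD matrix, is $d = m$, in which case AM--GM on $\lambda_1,\dots,\lambda_m \ge 0$ gives exactly $\left(\prod \lambda_i\right)^{1/m} \le \frac1m \sum \lambda_i$. I would state it in this form.

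\textbf{Main obstacle.} There is essentially no real obstacle here — both parts are one-line consequences of eigenvalue expansions and AM--GM. The only thing to watch is notational consistency between $d$ and $m$ in part (ii), and to make sure the AM--GM step is invoked on the full list of (possibly zero) eigenvalues so that the case $\det(X)=0$ is handled trivially (the left side is $0$, the right side is nonnegative).

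\begin{proof}
Since $X\in\bb{S}^d_+$, write its (nonnegative) eigenvalues as $\lambda_1,\dots,\lambda_d\ge 0$. Both determinant and trace are unitarily invariant, so we may assume $X=\mathrm{diag}(\lambda_1,\dots,\lambda_d)$, and then $I_d+X=\mathrm{diag}(1+\lambda_1,\dots,1+\lambda_d)$.

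\emph{(\ref{det-trace-1})}
Expanding the product,
\[
\det(I_d+X)=\prod_{i=1}^d(1+\lambda_i)=1+\sum_{i=1}^d\lambda_i+\sum_{1\le i<j\le d}\lambda_i\lambda_j+\cdots\ge 1+\sum_{i=1}^d\lambda_i=1+\tr(X),
\]
since every omitted term is a product of nonnegative reals.

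\emph{(\ref{det-trace-2})}
Here $d=m$. By the arithmetic--geometric mean inequality applied to $\lambda_1,\dots,\lambda_m\ge 0$,
\[
\det(X)^{1/m}=\Bigl(\prod_{i=1}^m\lambda_i\Bigr)^{1/m}\le\frac1m\sum_{i=1}^m\lambda_i=\frac{\tr(X)}{m}.
\]
\end{proof}
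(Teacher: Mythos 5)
Your proof is correct and follows essentially the same route as the paper's: diagonalize, expand $\prod_i(1+\lambda_i)$ for part (i), and apply AM--GM to the eigenvalues for part (ii). Your remark on reading $d=m$ in part (ii) is a fair clarification of a notational slip that the paper's own proof leaves implicit.
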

\begin{proof}Let $\lambda_1\ge\lambda_2\ge\ldots\ge\lambda_d\ge 0$
  denote the eigenvalues of $X$. {\bf (i)}  $\det(I_d+X)=\prod_{i=1}^d (1+\lambda_i)\ge1+ \sum_{i=1}^d \lambda_i=1+\tr(X)$,
where the equality is by the nonnegativity of the $\lambda_i$'s.  {\bf (ii)} By the inequality of arithmetic and geometric means, $\det(X)^{1/m}= (\prod_{i=1}^d \lambda_i)^{1/m}\le\sum_{i=1}^d \lambda_i/m=\tr(X)/m$.
\qued\end{proof}

\paragraph{The Goffin measure}
The running time of our full support algorithms will depend on the
following condition measure  introduced by Goffin \cite{Goffin}.
Given $A\in\R^{m\times n}$, we define
\begin{equation}\label{def:rho}
\rho_A\eqdef\max_{y\in\im(A)\setminus\{0\}} \min_{j\in [n]}\hat a_j^\T   \hat y.
\end{equation}
We remark that, in the literature, $A$ is typically assumed to have full row-rank
(i.e. $\rk(A)=m$), in which case $y$ in the above definition ranges
over all of $\R^m$. However, in some parts of the paper it will be
convenient not to make such an assumption.
The following Lemma summarizes well-known properties of $\rho_A$; the
proof will be given in Appendix~\ref{app:proofs} for completeness. For the matrix
$A$, we let $\conv(A)$ denote the convex hull of the column vectors of
$A$.

\begin{restatable}{lemma}{rhodef}\label{lem:rho}
$|\rho_A|$ equals the distance of 0 from the relative boundary of
$\conv(\hat A)$. Further,
\begin{enumerate}[(i)]
\item  $\rho_A<0$ if and only if 0 is in the relative  interior of $\conv(A)$.
  \item $\rho_A>0$ if and only if 0 is outside $\conv(A)$. In this case, the Goffin measure $\rho_{A}$ equals the
   {\em width of the image cone} $\Sigma_A$, that is, the radius of the
largest ball in $\R^m$ centered on the surface of the unit sphere and inscribed in
$\Sigma_A$.
\end{enumerate}
\end{restatable}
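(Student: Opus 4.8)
The plan is to rewrite $\rho_A$ in terms of the support function of $K\eqdef\conv(\hat A)$ inside the subspace $U\eqdef\im(A)=\spn(\hat A)$, and then read off the three regimes from the sign of a single scalar. Since $\min_{j}\hat a_j^\T\hat y$ depends only on the direction $\hat y$, I first restrict the maximum in \eqref{def:rho} to unit vectors $y\in U$ (the maximum is attained by compactness; the case $A=0$ is trivial). For a unit $y\in U$ one has $\min_{j\in[n]}\langle\hat a_j,y\rangle=\min_{z\in K}\langle z,y\rangle$, a linear function being minimised over the polytope $K$ at a vertex, and substituting $-y$ for $y$ (a bijection of the unit sphere of $U$) turns this inner minimum into a maximum, so that
\[
\rho_A \;=\; \max_{y\in U,\ \|y\|=1}\ \min_{z\in K}\langle z,y\rangle \;=\; -\min_{y\in U,\ \|y\|=1} h_K(y), \qquad h_K(y)\eqdef\max_{z\in K}\langle z,y\rangle .
\]
I also record that $0\in\conv(A)\iff 0\in\conv(\hat A)$ and $0\in\relint\conv(A)\iff 0\in\relint\conv(\hat A)$: a representation $0=\sum_j\lambda_j a_j$ with $\lambda\ge 0$ (resp.\ $\lambda>0$) and $\sum_j\lambda_j=1$ turns into one for $\hat A$ after rescaling $\lambda_j\mapsto\lambda_j\|a_j\|$ and renormalising, and conversely; here I use the standard description of $\relint$ of the convex hull of a finite set as the set of convex combinations with all coefficients strictly positive.

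Next I would branch on the sign of $\tau\eqdef\min_{y\in U,\ \|y\|=1}h_K(y)=-\rho_A$. If $0\notin K$, a separating hyperplane inside $U$ yields a unit $y\in U$ with $\langle z,y\rangle<0$ for all $z\in K$, so $\tau<0$; conversely $\tau<0$ exhibits such a $y$, hence $0\notin K$. If $0\in K$, then $h_K\ge 0$ identically, so $\tau\ge 0$, and $\aff(K)=\spn(K)=U$ (since $0\in\aff(K)$ and the columns span $U$); in this case $\tau=0$ iff some unit $y\in U$ has $h_K(y)=0$, i.e.\ iff $K$ has a supporting hyperplane through $0$, i.e.\ iff $0\in\bd_{\mathrm{rel}}(K)$, whereas $\tau>0$ iff no such hyperplane exists, i.e.\ iff $0\in\relint(K)$. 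Combined with the equivalences above, this gives precisely $\rho_A<0\iff 0\in\relint\conv(A)$ and $\rho_A>0\iff 0\notin\conv(A)$, which are parts (i) and (ii).

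To identify $|\rho_A|$ quantitatively I would proceed by cases. When $\rho_A>0$, put $p\eqdef\proj_K(0)\neq 0$; the projection inequality $\langle 0-p,\,z-p\rangle\le 0$ for $z\in K$ rearranges to $\langle z,\hat p\rangle\ge\|p\|$ for all $z\in K$, so $\min_{z\in K}\langle z,\hat p\rangle\ge\|p\|$, whereas $\min_{z\in K}\langle z,y\rangle\le\langle p,y\rangle\le\|p\|$ for every unit $y\in U$; hence $\rho_A=\|p\|=\mathrm{dist}(0,\conv(\hat A))$, which is the distance of $0$ to the relative boundary of $\conv(\hat A)$ in this regime, $0$ lying outside the polytope. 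When $\rho_A<0$ we have $0\in\relint(K)$ and $\aff(K)=U$, so the distance from $0$ to the relative boundary of $K$ is $\sup\{r\ge 0:\ r\B_U\subseteq K\}$, with $\B_U$ the unit ball of $U$; writing $K$ as the intersection of its supporting halfspaces $\{z\in U:\langle z,y\rangle\le h_K(y)\}$, the inclusion $r\B_U\subseteq K$ is equivalent to $r\le h_K(y)$ for all unit $y\in U$, i.e.\ to $r\le\tau$, so this distance equals $\tau=|\rho_A|$. The case $\rho_A=0$ is immediate, since then $0\in\bd_{\mathrm{rel}}(K)$ and both quantities are $0$. Finally, for the width statement in (ii): a ball $B(y_0,r)$ with $\|y_0\|=1$ is contained in $\Sigma_A=\{y:\langle a_j,y\rangle\ge 0\ \forall j\}$ iff $\langle a_j,y_0\rangle\ge r\|a_j\|$ for all $j$, i.e.\ iff $\langle\hat a_j,y_0\rangle\ge r$ for all $j$; since the component of $y_0$ orthogonal to $\im(A)$ neither affects this membership nor increases $\|y_0\|$, the optimal centre may be taken in $\im(A)$, and the largest admissible $r$ equals $\max_{y\in\im(A),\ \|y\|=1}\min_j\langle\hat a_j,y\rangle=\rho_A$.

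No single step is deep; the care lies in the bookkeeping around degeneracy. I carry everything out inside $U=\im(A)$ rather than $\R^m$, which treats the case $\rk(A)<m$ uniformly (in particular the inscribed-ball computation for $\Sigma_A$ reduces to its cross-section with $\im(A)$), and one must be slightly careful about what ``relative boundary of $\conv(\hat A)$'' means in the regime $\rho_A>0$, where $0$ lies outside $\conv(\hat A)$. The only nontrivial external facts I would invoke without proof are the description of $\relint$ of a finite point set, and that for a point $p$ in the relative interior of a convex set the distance from $p$ to the relative boundary equals the radius of the largest ball about $p$, taken within the affine hull, that is contained in the set.
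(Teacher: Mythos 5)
Your proposal is correct, and on the most substantive point it takes a genuinely different route from the paper. The paper derives Lemma~\ref{lem:rho} from Lemma~\ref{lemma:radius}: it fixes a minimum-norm point $p$ of the relative boundary of $\conv(A)$ and, in the case $0\in\relint\conv(A)$, argues polyhedrally through a facet $F$ containing $p$, the affine hyperplane spanned by $F$ and its minimum-norm point $q$, concluding $p=q$ and $\tau_A=-\|p\|$. You avoid the facet structure entirely: after rewriting $\rho_A$ as minus the minimum of the support function of $K\eqdef\conv(\hat A)$ over unit vectors of $\im(A)$, you settle the sign trichotomy by separation/supporting hyperplanes, and in the interior case you identify the distance from $0$ to the relative boundary with the largest inscribed-ball radius, using that $K$ is the intersection of its supporting halfspaces inside $\im(A)$; this is cleaner and works verbatim for any compact convex set, whereas the paper's facet argument also yields the (unneeded here) extra fact that $p$ lies in the relative interior of a unique facet. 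Your treatment of the case $0\notin\conv(\hat A)$ (projection inequality for $\proj_K(0)$) and of the width statement in (ii) (distance from a unit point of $\Sigma_A$ to the hyperplanes $a_j^\T y=0$) coincides in substance with the paper's, though you are more careful on two points the paper glosses over: the reduction of the ball center to $\im(A)$ in the width claim, and the boundary value $\rho_A=0$. One shared caveat: in the regime $0\notin\conv(\hat A)$, both you and Lemma~\ref{lemma:radius}(i) identify $|\rho_A|$ with the distance from $0$ to $\conv(\hat A)$ and read this as the distance to the relative boundary; this is accurate only when $0\in\aff(\conv(\hat A))$ (otherwise the nearest point of $\conv(\hat A)$ need not lie on its relative boundary), so the imprecision sits in the statement itself and is not a defect of your argument relative to the paper's.
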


The following quantities will be needed for bit complexity estimations. Assume that
the $m\times n$ matrix $A$ has integer entries and encoding size  $L$. Letting $\mathcal{B}=\{B\subseteq [n]\st |B|=\rk(A_B)\}$, we
define
\begin{equation}\label{def:delta}
\Delta_A=\max_{B\in\mathcal{B}} \prod_{j\in B}\|a_j\|,\quad\mbox{and}\quad \bart_A=\frac {1}{m^2{\Delta_A}^2}.
\end{equation}

\begin{restatable}{lemma}{numerical}\label{lem:numerical-shit}
Let $A\in \Z^{m\times n}$ of total encoding length $L$. If $\rho_A\neq 0$, then  $\displaystyle|\rho_A|\geq \bart_A \geq 2^{-4L}$.
\end{restatable}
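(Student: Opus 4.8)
The plan is to show that $|\rho_A|$ equals the Euclidean distance from the origin to the affine hull $\aff(\hat A_J)$ of some index set $J$ of linearly independent normalized columns with $|J|\le m$, and then to bound this distance from below by a determinant estimate that crucially uses $\|\hat a_j\|=1$. The second inequality $\bart_A\ge 2^{-4L}$ is then a routine bit‑size estimate.

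\emph{Locating the distance.} Since $\rho_A\ne 0$, some column is nonzero, so $\im(A)\ne\{0\}$ and, by compactness, the maximum in \eqref{def:rho} is attained by a unit vector $y^*\in\im(A)$; let $J=\{j:\hat a_j^\T y^*=\rho_A\}$ be the indices realizing the inner minimum. First‑order optimality conditions for the maximization of the concave map $y\mapsto\min_j\hat a_j^\T y$ over the unit sphere of $\im(A)$ give multipliers $\mu_j\ge 0$ ($j\in J$), $\sum_{j\in J}\mu_j=1$, with $\sum_{j\in J}\mu_j\hat a_j=\lambda y^*$; taking the inner product with $y^*$ forces $\lambda=\rho_A$. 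Put $w\eqdef\rho_A y^*=\sum_{j\in J}\mu_j\hat a_j$. Then $\|w\|=|\rho_A|$ and $\hat a_j^\T w=\rho_A^2=\|w\|^2$ for all $j\in J$, i.e.\ $\hat a_j^\T w$ is constant over $j\in J$; this means $w$ is the orthogonal projection of $0$ onto $\aff(\hat A_J)$, so $|\rho_A|=\|w\|$ is the distance from $0$ to $\aff(\hat A_J)$ (compare Lemma~\ref{lem:rho}). By Carathéodory I would then shrink $J$ so that $\{\hat a_j\}_{j\in J}$ is affinely independent while keeping $w\in\aff(\hat A_J)$; since $\|w\|>0$ we have $0\notin\aff(\hat A_J)$, so $\{\hat a_j\}_{j\in J}$ — and hence $\{a_j\}_{j\in J}$ — is linearly independent, $J\in\mathcal B$, and $k\eqdef|J|\le\rk(A)\le m$.

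\emph{The determinant estimate.} Let $\hat G=(\hat a_i^\T\hat a_j)_{i,j\in J}\in\bb S^k_{++}$ be the Gram matrix of the normalized columns in $J$ (positive definite by linear independence). From $w=\hat A_J\mu$ with $\vec e^\T\mu=1$ and $\hat A_J^\T w=\|w\|^2\vec e$ one gets $\|w\|^2=1/(\vec e^\T\hat G^{-1}\vec e)$, and then $\vec e^\T\hat G^{-1}\vec e\le\|\vec e\|_2^2\,\lambda_{\min}(\hat G)^{-1}=k/\lambda_{\min}(\hat G)$. The key point is that $\tr(\hat G)=k$ (all diagonal entries equal $1$): ordering the eigenvalues $\lambda_1\ge\dots\ge\lambda_k>0$, AM--GM gives $\lambda_1\cdots\lambda_{k-1}\le(\tfrac{k}{k-1})^{k-1}<e$, so $\lambda_{\min}(\hat G)=\lambda_k=\det(\hat G)/(\lambda_1\cdots\lambda_{k-1})\ge\det(\hat G)/e$. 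Finally $\det(\hat G)=\det(A_J^\T A_J)/\prod_{j\in J}\|a_j\|^2\ge 1/\Delta_A^2$, because $\det(A_J^\T A_J)$ is a sum of squares of $k\times k$ integer minors of $A_J$ (Cauchy--Binet), at least one nonzero, and $\prod_{j\in J}\|a_j\|\le\Delta_A$ as $J\in\mathcal B$. Combining, $|\rho_A|^2\ge\lambda_{\min}(\hat G)/k\ge 1/(ek\Delta_A^2)\ge 1/(em\Delta_A^2)$, so $|\rho_A|\ge 1/(\sqrt{em}\,\Delta_A)\ge 1/(m^2\Delta_A^2)=\bart_A$ whenever $m^3\Delta_A^2\ge e$; this holds for all $m\ge 2$, and if instead $k=1$ (in particular if $m=1$) then $\hat G=(1)$, $|\rho_A|=1\ge\bart_A$ trivially.

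\emph{The bit bound, and the main obstacle.} For the second inequality, $\|a_j\|\le\prod_i(1+|a_{ij}|)$ gives $\Delta_A\le\prod_{i,j}(1+|a_{ij}|)=2^{\sum_{i,j}\log_2(1+|a_{ij}|)}\le 2^L$, and $m\le L$ for any standard encoding, hence $m^2\Delta_A^2\le 2^{4L}$ and $\bart_A\ge 2^{-4L}$. The step I expect to be most delicate is the eigenvalue estimate in the previous paragraph: the naïve bound $\lambda_{\min}(\hat G)\ge\det(\hat G)/\lambda_{\max}(\hat G)^{k-1}$ loses a factor exponential in $m$ (since $\lambda_{\max}(\hat G)$ can be of order $k$), yielding only $|\rho_A|\gtrsim(m^{m/2}\Delta_A)^{-1}$; it is essential to use $\tr(\hat G)=k$ to replace $\lambda_{\max}(\hat G)^{k-1}$ by the absolute constant $e$. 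One also has to be careful that the affinely independent family produced in the first step is genuinely linearly independent (so that $\hat G$ is invertible), and this is precisely where the hypothesis $\rho_A\ne 0$ enters.
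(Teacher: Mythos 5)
Your argument is correct, but it takes a genuinely different route from the paper's. The paper first passes to the unnormalized quantity $\tau_A$ (distance of $0$ to the relative boundary of $\conv(A)$, Lemma~\ref{lemma:radius}), splits into two cases, and in the case $0\notin\conv(A)$ lower-bounds the width via an auxiliary LP whose basic optimal solution is controlled by Cramer's rule and Hadamard's bound (Claim~\ref{lemma:width estimate}); when $0$ lies in the relative interior it reduces to a facet and reuses the same LP estimate, finally trading a factor $\Delta_A$ to return from $\tau_A$ to $\rho_A$. You instead characterize the optimizer directly: stationarity of $y\mapsto\min_j\hat a_j^\T y$ on the unit sphere identifies $\rho_A\hat y^*$ as the projection of $0$ onto the affine hull of an affinely (hence, since $\rho_A\neq 0$, linearly) independent subfamily of normalized columns, and then the distance formula $\|w\|^2=1/(\vec e^\T\hat G^{-1}\vec e)$ is bounded through $\lambda_{\min}(\hat G)\ge\det(\hat G)/e$ (using $\tr(\hat G)=k$) and $\det(\hat G)\ge\Delta_A^{-2}$ (Cauchy--Binet plus integrality). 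This buys a uniform treatment of both signs of $\rho_A$ with no facet case distinction, and in fact a sharper intermediate bound $|\rho_A|\ge 1/(\sqrt{em}\,\Delta_A)$, compared with the paper's $1/(m^2\Delta_A^2)$; the paper's route is more elementary, relying only on LP basic solutions and avoiding nonsmooth optimality conditions on a nonconvex constraint set. Two points in your write-up deserve a sentence of care rather than a fix: the stationarity condition on the sphere is not the KKT condition of a convex program when $\rho_A<0$ (you cannot pass to the unit ball there, since the ball maximum is $0$ at the origin), so you should invoke the Lagrange multiplier rule for a locally Lipschitz concave objective under the smooth equality constraint $\|y\|^2=1$ (LICQ holds), or argue directly via piecewise linearity that otherwise a tangent direction improves the minimum; and the residual case in your final comparison is really $m=1$ (where necessarily $k=1$ and $|\rho_A|=1$), the inequality $m^3\Delta_A^2\ge e$ for $m\ge 2$ using the easy fact $\Delta_A\ge 1$, which is worth stating explicitly since it follows from $\rho_A\neq 0$ guaranteeing a nonzero integer column.
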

The proof can be found in Appendix~\ref{app:proofs}. Let us note that $\Delta_A$ and $\bart_A$ can be efficiently
computed. Indeed, the value of $\Delta_A$ is the maximum weight base of a
linear matroid, and can be computed by the greedy algorithm in $O(m^2
n+n\log n)$ arithmetic operations,
since this amounts to sorting the columns of $A$ according to their length, and then applying Gaussian elimination (which requires $O(m^2n)$ operations).

\subsection{First order algorithms}\label{sec:coord-desc-overview}

Various first order methods are known for finding non-zero
solutions to \eqref{eq:primal} or to  \eqref{eq:dual}. Most algorithms
assume either the feasibility of \eqref{eq:main problem}, or the feasibility of \eqref{eq:main
  dual}. We outline the
common update steps of these algorithms.

At every iteration, maintain a non-negative, non-zero vector $x\in
\R^n$, and we
let $y=Ax$. If $y=0$, then $x$ is a non-zero point in $\ker(A)_+$. If  $A^\T  y> 0$,
then $A^\T y\in \im(A)_{++}$. Otherwise, choose an index $k\in [n]$ such that
$a_k^\T  y \le 0$, and update $x$ and $y$ as follows:
\begin{equation}\label{eq:general update} y':=\alpha y+\beta \hat a_k;\quad
x':=\alpha x+\frac{\beta}{\|a_k\|} \vec{e}_k, \end{equation} where
$\alpha,\beta>0$ depend on the specific algorithm. Below we discuss various
possible update choices.

\paragraph{Von Neumann's algorithm} The vector $y$ is maintained
throughout as a convex combination of $\hat a_1,\ldots,\hat a_n$.
The parameters $\alpha, \beta>0$ are chosen so that $\alpha+\beta=1$ and
$\|y'\|$ is smallest possible. That is, $y'$ is the point of minimum norm on the
line segment joining $y$ and $\hat a_k$. If we denote by $y^t$ the vector at
iteration $t$, and initialize $y^1=\hat a_k$ for an arbitrary $k\in [n]$, a simple argument shows that $\|y^t\|\leq 1/\sqrt{t}$ (see
Dantzig \cite{Dantzig-92}).  If 0 is contained in the interior of the convex
hull, that is $\rho_{A}<0$, Epelman and Freund \cite{Epelman-Freund} showed
that $\|y^t\|$ decreases by a factor of $\sqrt{1-\rho_{A}^2}$ in every
iteration. 

If $0$ is outside the convex hull  that is, $\rho_{A}>0$, then the algorithm terminates
after at most $1/\rho_{A}^2$ iterations.  A recent result by Pe\~na, Soheili, and
Rodriguez \cite{Pena-Soheili-away} gives a variant of the algorithm with a
provable convergence guarantee in the case $\rho_{A}=0$, that is, if $0$ is
on the boundary of the convex hull.

Among the previous geometric rescaling algorithms, variants of von
Neumann's algorithm have been used by  Betke~\cite{Betke} for the case
$\rho_A>0$, and by Chubanov~\cite{Chubanov-new} for the case
$\rho_A<0$. We will use this method in our Image Algorithm, that is,
for $\rho_A>0$.

We note that von Neumann's algorithm is the same as the Frank-Wolfe
conditional gradient descent method \cite{frankwolfe}  with optimal step size for the quadratic program $\min \|Ax\|^2$ s.t. $\vec{e}^\T x=1, x\ge 0$.

\paragraph{Perceptron algorithm}  The Perceptron algorithm chooses $\alpha=\beta=1$ at every
iteration. If $\rho_A>0$, then, similarly to the von Neumann algorithm, the Perceptron algorithm terminates
with a solution to the system \eqref{eq:main dual}  after at most $1/\rho_A^2$
iterations (see Novikoff \cite{Novikoff}).
The Perceptron and von Neumann algorithms can be interpreted as duals of each other, see Li and Terlaky \cite{Terlaky-Li}.

Pe\~na and Soheili gave a smoothed
variant of the Perceptron update which guarantees termination in time
$O(\sqrt{\log n}/|\rho_A|)$ iterations~\cite{Pena-Soheili-smooth}. This
was used in the polynomial-time rescaling algorithm
\cite{Pena-Soheili} for $\rho_A>0$.
The same iteration bound
$O(\sqrt{\log n}/|\rho_A|)$ was achieved by Yu et
al.~\cite{Yu-Karzan-Carbonell} by adapting the Mirror-Prox algorithm
of Nemirovski~\cite{Nemirovski}.

With a normalization to $\vec{e}^\T x=1$, Perceptron implements the
Frank-Wolfe algorithm for the same system
 $\min \|Ax\|^2$ s.t. $\vec{e}^\T x=1, x\ge 0$, with step length
 $1/(k+1)$ at iteration $k$.
An alternative view is to interpret Perceptron as a coordinate descent method for the
system $\min \|Ax\|^2$ s.t. $x\ge \vec e$, with
fixed step length 1 at every iteration.

\paragraph{Dunagan-Vempala algorithm} The first order method
used in \cite{Dunagan-Vempala}
chooses $\alpha=1$ and $\beta=-(\hat a^\T  _k y)$. The  choice of
$\beta$ is the one that makes $\|y'\|$ the smallest possible when
$\alpha=1$.
 It follows that $\|y'\|^2=\|y\|^2+2\beta(\hat a_k^\T   y)+\beta^2\|\hat a_k\|^2=\|y\|^2-2(\hat a_k^\T   y)^2+(\hat a_k^\T  y)^2=\|y\|^2(1-(\hat a_k^\T  \hat y)^2)$, hence
\begin{equation}\label{eq:decrease in norm}
\|y'\|= \|y\|\sqrt{1-(\hat a_k^\T  \hat y)^2}.
\end{equation}
In particular, the norm of $y'$ decreases at every iteration, and the
larger is the angle between $a_k$ and $y$, the larger the decrease. If
$\rho_A<0$, then $|\hat a_k^\T  \hat y|\geq |\rho_A|$, therefore this guarantees a decrease in the norm of
at least $\sqrt{1-\rho_A^2}$.

This is a coordinate descent for the system $\min \|Ax\|^2$ s.t. $x\ge \vec e$, but with the optimal step length.
One can also interpret it as the Frank-Wolfe algorithm with the
optimal step length for the same system.\footnote{The Frank-Wolfe method is originally
  described for a compact set, but the set here is
  unbounded. Nevertheless, one can easily modify the method by moving
  along an unbounded recession direction.}

While Dunagan and Vempala used these updates for $\rho_A>0$, we will
use them in our Kernel Algorithm for $\rho_A<0$.

\section{The Full Support Kernel Algorithm}\label{sec:kernel-full}

The Full Support Kernel Algorithm (Algorithm~\ref{alg:primal-algorithm-matrix}) solves the full support problem \eqref{eq:main
  problem}, that is, it finds a point in $\ker(A)_{++}$, assuming that
$\rho_A<0$, or equivalently,
$\ker(A)_{++}\neq \emptyset$. We assume that the columns of input
matrix $A$ are normalized, that is, $A=\hat A$. However, this property
does not hold throughout the algorithm, since
the rescalings will modify the length of the columns.
We use the parameter
\begin{equation}\label{eq:definition epsilon}\varepsilon\eqdef \const.
\end{equation}

\renewcommand{\algorithmicrequire}{\textbf{Input:}}
\renewcommand{\algorithmicensure}{\textbf{Output:}}

\begin{figure}[htb]
\begin{center}
\begin{minipage}{0.85\textwidth}
\begin{algorithm}[H]
\raggedright
  \begin{algorithmic}[1]
    \Require{A matrix $A\in\R^{m\times n}$  such that $\rho_A<0$ and $\|a_j\|=1$ for all $j\in [n]$.}
    \Ensure{A solution to the system (\ref{eq:main problem}).}
    \State Compute $\Pi:=\Pi_A^K=I_n-A^\T( AA^\T)^{+} A$.
    \State Set $x_j:=1$ for all $j\in [n]$,  and $y:=Ax$.
    \While{$\Pi x\not> 0$}
  	\State Let $\displaystyle k:=\arg\min_{j\in [n]} \hat a_j^\T   \hat y$;
       \If{$\hat a_k^\T   \hat y< -\varepsilon$}
          \State {\bf update} $\displaystyle x:=x-\frac{a_k^\T
            y}{\|a_k\|^2}\vec{e}_k$;\quad $\displaystyle y:=y-(\hat
          a_k^\T   y) \hat a_k$; \label{li:DV}
         \Else
\State {\bf rescale}
 $\displaystyle A:=\left(I_m+\hat y\hat y^\T  \right)A$;\quad  $\displaystyle y:=2y$;\label{li:Kernel rescale}
	\EndIf
    \EndWhile
    \Return{$\bar  x=\Pi x$ as a feasible solution to (\ref{eq:main problem}).}
 \end{algorithmic}
\caption{Full Support Kernel Algorithm}\label{alg:primal-algorithm-matrix}
\end{algorithm}
\end{minipage}
\end{center}
\end{figure}

 We use Dunagan-Vempala (DV) updates as the
first order method. We maintain a vector $x\in \R^n$, initialized as $x=\vec{e}$; the coordinates $x_i$ never decrease during the algorithm.
We also maintain $y=Ax$, and a main quantity of interest is the norm $\|y\|^2$.
In each iteration of the algorithm, we check whether $\bar x=\Pi x$, the projection of $x$ onto $\ker(A)$, is strictly positive.
If this happens, then
$\bar x$ is returned as a feasible solution to \eqref{eq:main
  problem}.

Every iteration performs either  a DV update to $x$ (line~\ref{li:DV}) or a rescaling of the matrix $A$ (line~\ref{li:Kernel rescale}).
Because DV updates are performed only for $k\in [n]$ satisfying $\hat a_k^\T   \hat y< -\varepsilon$, \eqref{eq:decrease in norm} ensures a substantial decrease in the norm, namely
\begin{equation}\label{eq:decrease in norm epsilon}
\|y'\|\leq \|y\|\sqrt{1-\varepsilon^2}.
\end{equation}

On the other hand, rescalings are performed when if $\hat a_j^\T   \hat y\geq -\varepsilon$ for all
$j\in[n]$, which implies that $|\rho_A|<\eps $. In this case, we show a substantial improvement in a volumetric potential.
Let us define the polytope $P_A$ as
\begin{equation}\label{eq:P_A}P_A\eqdef \conv(\hat A)\cap (-\conv(\hat
A)).\end{equation}
The volume of $P_A$ will be used as a proxy to
$|\rho_A|$.
Indeed, from Lemma~\ref{lem:rho}  $|\rho_A|$ is the radius of the
largest ball around the origin inscribed in $\conv(\hat A)$, and this ball
must be contained in $P_A$.

The condition $\hat a_j^\T   \hat y\geq -\varepsilon$ for all $j\in [n]$ implies then $P_A$  is contained in a ``narrow strip'' of width $2\varepsilon$, namely $P_A\subseteq \{z\in\R^m\st -\varepsilon\leq \hat y^\T   z\leq \varepsilon\}$. If we replace $A$ with the matrix $A':=(I+\hat y\hat
y^\T  )A$,  then Lemma~\ref{lem:volume-increase} implies that
$\vol(P_{A'})\geq \nicefrac{3}{2}\vol(P_A)$. Geometrically, $A'$ is obtained
by applying  to  the columns of $A$ the linear transformation that
``stretches'' them by a factor of two in the direction of $\hat y$
(see Figure~\ref{fig:rescale}).

\begin{figure}[htbp]
\begin{center}
\begin{minipage}{\ourpagewidth}
\psfrag{y}{$\hat y$}
\psfrag{1}{$a_1$}
\psfrag{2}[c]{$a_2$}
\psfrag{3}{$a_3$}
\psfrag{4}{$a_4$}
\psfrag{5}{$a_5$}
\psfrag{a}{$a'_1$}
\psfrag{b}{$a'_2$}
\psfrag{c}{$a'_3$}
\psfrag{d}{$a'_4$}
\psfrag{e}{$a'_5$}
\psfrag{P}{$P_A$}
\psfrag{Q}{$P_{A'}$}
\psfrag{v}{$\varepsilon$}
\includegraphics[scale=.5]{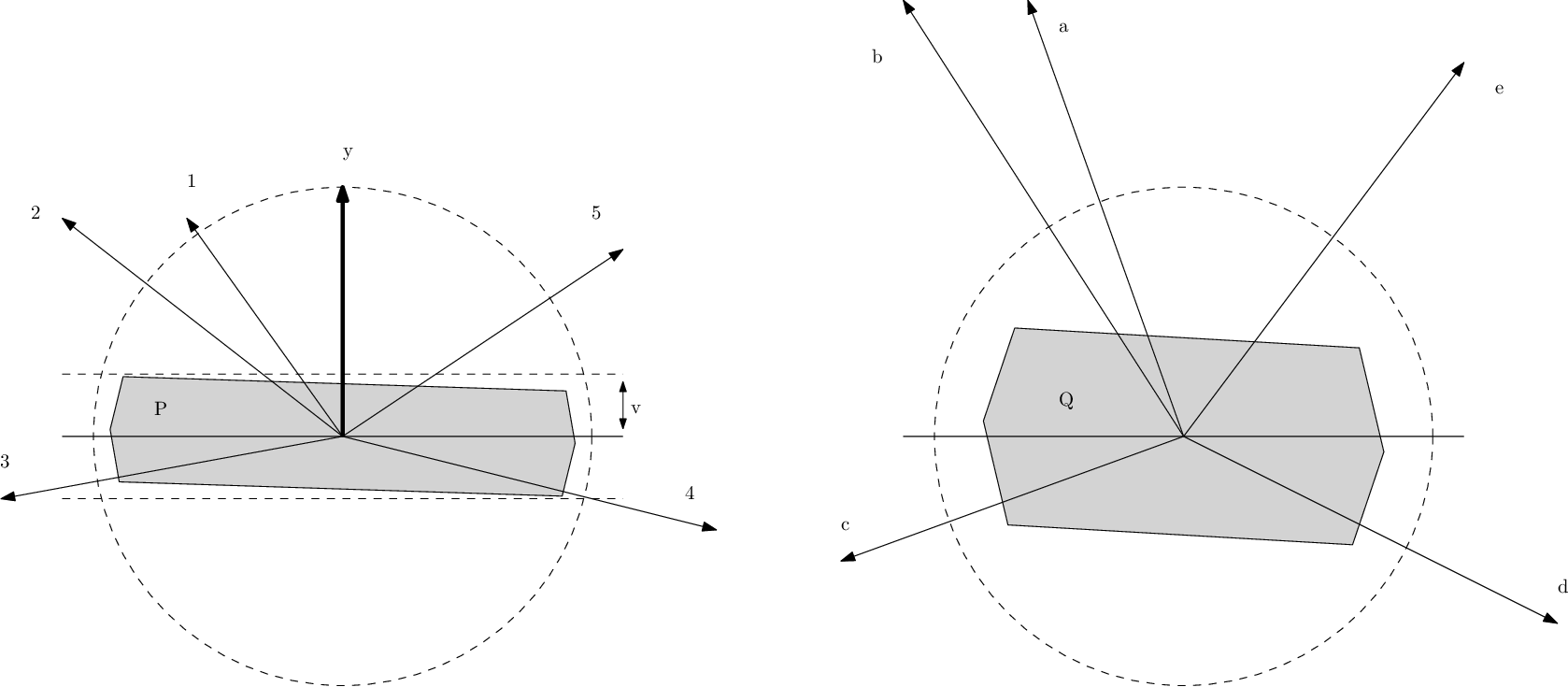}
\caption{\label{fig:rescale} \small{Effect of rescaling. The dashed circles represent the points of norm 1. The shaded areas are $P_A$ and $P_{A'}$.}}
\end{minipage}
\end{center}
\end{figure}

Thus, at every iteration we either have a substantial decrease in the length of
the current $y$, or we have a constant factor increase in the volume of $P_A$.

The volume of $P_A$ is bounded by the volume of the unit ball in
$\R^m$, and initially contains a ball of radius $|\rho_A|$ around the origin. Consequently, the number of rescalings cannot exceed
$ m\log_{\nicefrac{3}{2}}{|\rho_A|^{-1}} $.

The norm $\|y\|$ changes as follows. In every iteration where the DV update
is applied, the norm of $\|y\|$  decreases by a factor
$\sqrt{1-\varepsilon^2}$ according to \eqref{eq:decrease in norm
  epsilon}. At every rescaling, the norm of $\|y\|$ increases by a
factor 2.  Lemma~\ref{lem:delta-rounding} shows that once
$\|y\|<|\rho_A|$ for the  initial value of $|\rho_A|$, then the algorithm
terminates with $\bar x=\Pi x>0$. We will prove the following running
time bounds.

\begin{theorem}\label{thm:main-matrix} For any input matrix $A\in\R^{m\times
    n}$ such $\rho_A<0$ and $\|a_j\|= 1$ for all $j\in [n]$,  Algorithm~\ref{alg:primal-algorithm-matrix}
  finds a feasible solution of~\eqref{eq:main problem}  in
  $O(m^2\log n+m^3\log{|\rho_A|^{-1}})$ DV updates. The number of arithmetic
  operations is $O(m^2n\log n$ $+(m^3n+mn^2)\log{|\rho_A|^{-1}})$.
\end{theorem}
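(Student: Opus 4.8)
The plan is to turn the two ``progress per step'' statements already sketched in the exposition into a joint counting bound and then to account for the per‑iteration arithmetic. First I would pin down the invariants of Algorithm~\ref{alg:primal-algorithm-matrix}: no coordinate of $x$ ever decreases (line~\ref{li:DV} adds a positive multiple of $\vec e_k$, since $a_k^\T y<0$ there, and line~\ref{li:Kernel rescale} leaves $x$ fixed), so $x\ge\vec e$ throughout; the identity $y=Ax$ is preserved, the rescaling step being consistent because $(I_m+\hat y\hat y^\T)Ax=y+(\hat y^\T y)\hat y=2y$; and $\Pi=\Pi^K_A$ is unchanged by rescaling because $I_m+\hat y\hat y^\T$ is invertible, so the single computation of $\Pi$ in line~1 suffices and $\bar x=\Pi x$ is meaningful at all times.

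\textbf{Counting the steps.} For the rescalings I would invoke, as in the exposition, that at line~\ref{li:Kernel rescale} one has $\hat a_j^\T\hat y\ge-\eps$ for all $j$, so $P_A$ lies in the slab $\{z:|\hat y^\T z|\le\eps\}$ and Lemma~\ref{lem:volume-increase} gives $\vol(P_{A'})\ge\tfrac32\vol(P_A)$; combined with $P_A\subseteq\B^m$ and the initial inclusion of a ball of radius $|\rho_A|$ (Lemma~\ref{lem:rho}), this bounds the number of rescalings by $R\le m\log_{3/2}|\rho_A|^{-1}=O(m\log|\rho_A|^{-1})$. For the DV updates I would use the potential $\phi=\log_2\|y\|$: it starts at $\phi\le\log_2\|A\vec e\|\le\log_2 n$; each DV update decreases it by at least $-\tfrac12\log_2(1-\eps^2)=\Omega(\eps^2)=\Omega(1/m^2)$ by \eqref{eq:decrease in norm epsilon}; each rescaling increases it by exactly $1$; and, crucially, at the start of any iteration that performs an update we have $\Pi x\not>0$, which by the contrapositive of Lemma~\ref{lem:delta-rounding} forces $\|y\|\ge|\rho_A|$, i.e. $\phi\ge-\log_2|\rho_A|^{-1}$. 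Just before the last update this gives $\log_2 n-(D-1)\cdot\Omega(1/m^2)+R\ge-\log_2|\rho_A|^{-1}$, hence $D=O\big(m^2(\log n+\log|\rho_A|^{-1}+R)\big)=O(m^2\log n+m^3\log|\rho_A|^{-1})$, and since $R$ is dominated by this the total iteration count is of the same order.

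\textbf{Arithmetic per iteration.} To reach the stated operation count I would maintain, besides $x,y,\|y\|^2$, the Gram matrix $G=A^\T A$, the vector $A^\T y$, and $\bar x=\Pi x$. A DV update $y'=y-c\,a_k$ with $c=(a_k^\T y)/\|a_k\|^2$ (both scalars read off from $A^\T y$ and the diagonal of $G$) updates $A^\T y$ by $-c$ times the $k$th column of $G$, updates $\bar x$ by $-c$ times the $k$th column of $\Pi$, and updates $\|y\|^2$ by $-(a_k^\T y)^2/\|a_k\|^2$; choosing $k=\arg\min_j(a_j^\T y)/(\|a_j\|\|y\|)$ is an $O(n)$ scan of the maintained data. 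So a DV update costs $O(n)$ and all of them together $O(nD)=O(m^2n\log n+m^3n\log|\rho_A|^{-1})$. A rescaling multiplies $y$, $A^\T y$, $\|y\|^2$ by fixed constants, leaves $\bar x$ unchanged, and replaces $G$ by $(A')^\T A'=A^\T(I_m+\hat y\hat y^\T)^2A=G+3\,(A^\T\hat y)(A^\T\hat y)^\T$ via the identity $(I_m+\hat y\hat y^\T)^2=I_m+3\hat y\hat y^\T$; computing $A^\T\hat y$ and this rank‑one update cost $O(mn+n^2)$, so all rescalings together cost $O\big((mn+n^2)\cdot m\log|\rho_A|^{-1}\big)=O((m^2n+mn^2)\log|\rho_A|^{-1})$. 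The one‑time precomputation of $\Pi$ and $G$ takes $O(mn^2)$, which is dominated. Summing the three contributions gives $O(m^2n\log n+(m^3n+mn^2)\log|\rho_A|^{-1})$, as claimed.

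\textbf{Main obstacle.} The substantive difficulty is the interaction of the two potentials: DV updates shrink $\|y\|$ but rescalings double it, so neither potential alone controls termination; one must bound the rescalings independently via $\vol(P_A)$ and then feed that count back into the norm bookkeeping, and the arithmetic only closes because $\eps=\Theta(1/m)$ makes the $\Omega(\eps^2)=\Omega(1/m^2)$ norm‑decrease per DV update exactly offset the $+1$ per rescaling over $\Theta(m^2)$ updates. The only other point needing care is the incremental maintenance of $G$, $A^\T y$ and $\Pi x$ so that a DV update is $O(n)$ rather than $O(mn)$; keeping the Gram update rank one is exactly what the identity $(I_m+\hat y\hat y^\T)^2=I_m+3\hat y\hat y^\T$ buys.
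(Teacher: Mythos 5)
Your proposal follows essentially the same route as the paper's proof: rescalings are counted via the constant-factor growth of $\vol(P_A)$ from Lemma~\ref{lem:volume-increase} together with $P_A\subseteq \B^m$ and the initial ball of radius $|\rho_A|$; DV updates are counted by playing the $\sqrt{1-\eps^2}$ decrease of $\|y\|$ against the doubling at rescalings and the termination threshold; and the arithmetic is closed by maintaining $A^\T A$, $A^\T y$ and $\Pi x$, with the rank-one Gram update coming from $(I_m+\hat y\hat y^\T)^2=I_m+3\hat y\hat y^\T$. All of the bounds you derive match the paper's.

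The one step that needs repair is your appeal to ``the contrapositive of Lemma~\ref{lem:delta-rounding}'' to conclude $\|y\|\ge|\rho_A|$ while the algorithm is still running. That lemma is stated for a matrix with unit-norm columns and refers to the Goffin measure of the matrix it is applied to; after rescalings the current matrix no longer has unit columns and its Goffin measure is not the input's $\rho_A$, so the lemma cannot be applied directly to the current iterate $y=Ax$. The paper instead applies Lemma~\ref{lem:delta-rounding} to the \emph{original} matrix $\bar A$ and the vector $\bar y=\bar A x$ (which is legitimate, since $x\ge\vec e$ and $\bar A$ has unit columns), and then transfers the bound to the algorithm's $y$ via Lemma~\ref{lemma:rescaling stretch}, which gives $\|\bar y\|\le\|y\|$ because $y$ is obtained from $\bar y$ by applying the maps $I_m+v_iv_i^\T$. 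Since the inequality goes in the right direction, your conclusion is correct, but this bridging step must be stated explicitly; without it the lower bound $\phi\ge-\log_2|\rho_A|^{-1}$ in your potential argument is unjustified.
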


Using Lemma~\ref{lem:numerical-shit}, we obtain a running time bound in terms of
bit complexity.

\begin{corollary}\label{cor:bit complexity primal} Let $A$ be an $m\times n$ matrix with integer entries and encoding size $L$. If $\rho_A<0$, then Algorithm~\ref{alg:primal-algorithm-matrix} applied to $\hat A$ finds a feasible solution of~\eqref{eq:main problem} in $O\left((m^3n+mn^2)L\right)$ arithmetic operations.
\end{corollary}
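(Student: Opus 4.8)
The plan is to obtain the corollary as an immediate consequence of Theorem~\ref{thm:main-matrix} together with the numerical estimate of Lemma~\ref{lem:numerical-shit}. First I would check that the hypotheses of Theorem~\ref{thm:main-matrix} hold for the normalized matrix $\hat A$ on which the algorithm is actually run: the columns of $\hat A$ have unit norm by construction, and since $\rho_A$ depends on $A$ only through the normalized columns $\hat a_j$ (and $\im(\hat A)=\im(A)$), we have $\rho_{\hat A}=\rho_A<0$. Hence Theorem~\ref{thm:main-matrix} applies, and Algorithm~\ref{alg:primal-algorithm-matrix} run on $\hat A$ returns a feasible solution of~\eqref{eq:main problem} after $O\left(m^2n\log n+(m^3n+mn^2)\log|\rho_A|^{-1}\right)$ arithmetic operations.

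Next I would replace $\log|\rho_A|^{-1}$ by a bound in $L$. Since $A\in\Z^{m\times n}$ has encoding length $L$ and $\rho_A\neq 0$, Lemma~\ref{lem:numerical-shit} gives $|\rho_A|\ge\bart_A\ge 2^{-4L}$, so that $\log|\rho_A|^{-1}\le 4L=O(L)$. Substituting into the bound above, the running time becomes $O\left(m^2n\log n+(m^3n+mn^2)L\right)$.

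Finally I would absorb the additive $m^2n\log n$ term into the main term. Because the encoding length of an integer $m\times n$ matrix satisfies $L\ge n$ (and in particular $L\ge\log n$), we get $m^2n\log n\le m^2nL\le(m^3n+mn^2)L$, so the whole expression simplifies to $O\left((m^3n+mn^2)L\right)$, which is exactly the claimed bound.

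There is no real obstacle here: the statement is a routine specialization of the main theorem to the bit-complexity model. The only two points that require a line of justification are (i) the scaling invariance of $\rho$ in the columns, which lets Theorem~\ref{thm:main-matrix} be invoked for $\hat A$ with $\rho_{\hat A}=\rho_A$, and (ii) the bound $\log n=O(L)$, which lets the lower-order term be dropped; both are immediate.
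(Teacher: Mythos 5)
Your proposal is correct and matches the paper's (implicit) argument: the paper derives this corollary directly from Theorem~\ref{thm:main-matrix} via Lemma~\ref{lem:numerical-shit}, exactly as you do, with the lower-order term absorbed using $\log n = O(L)$. The two small justifications you supply (scale-invariance of $\rho_A$ and $n\le L$) are both consistent with facts the paper itself uses elsewhere.
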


\paragraph{Coordinate Descent with Finite Convergence}
Before proceeding to the proof of Theorem~\ref{thm:main-matrix}, let
us consider a modification of Algorithm~\ref{alg:primal-algorithm-matrix} without any
rescaling. That is, at every iteration we perform a DV update (even if
$\hat a_j^\T   \hat y\geq -\varepsilon$ for all $j\in [n]$), until
$\Pi_A^K x>0$.  We claim that if $\rho_A<0$, then the total number of DV steps is bounded by $O(\log(n/|\rho_{A}|)/\rho_{A}^2)$.

This is in contrast with von Neumann's algorithm that does not have
finite convergence for $\rho_A<0$; this aspect is discussed by Li and Terlaky
\cite{Terlaky-Li}. Dantzig \cite{dantzig1991} proposed a finitely
converging variant of
von Neumann's algorithm, but this involves running the algorithm $m+1$
times, and also an explicit lower bound on the parameter $|\rho_A|$. Our
algorithm does not incur a running time increase compared to the
original variant, and does not require such a bound.

Let us now verify the running time bound of our variant.
Again, let us assume $\|a_j\|=1$ for
all $j\in [n]$ for the input.
It follows by  \eqref{eq:decrease in norm
  epsilon} that the norm $\|y\|$ decreases by at least a factor
$\sqrt{1-\rho_A^2}$ in every DV update. Initially, $\|y\|\le {n}$,  and, as shown in
Lemma~\ref{lem:delta-rounding}, the algorithm terminates with a solution $\Pi_A^K x>0$ as soon as $\|y\|<|\rho_A|$.
This yields the bound $O(\log(n/|\rho_{A}|)/\rho_{A}^{2})$ on the number
of DV steps.

\subsection{Analysis}\label{sec:analysis rho}
We will use the following technical lemma.
\begin{lemma}
\label{lem:exp2}
Let $X \in \R$ be a random variable supported on the interval $[-\eps,\eta]$,
where $0 \leq \eps \leq \eta$, satisfying $\E[X] = \mu$. Then for any $c \geq 0$, we
have that $\E[\sqrt{1+ cX^2}] \leq \sqrt{1+c\eta(\eps+|\mu|)}$.
\end{lemma}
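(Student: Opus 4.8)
The plan is to reduce the problem to a two-point distribution via a convexity/extreme-point argument, and then optimize over such distributions explicitly. First I would observe that the function $g(t) = \sqrt{1+ct^2}$ is convex in $t$, so $X^2$ appearing inside is not directly the quantity we control — rather, we want to maximize $\E[g(X)]$ over all random variables $X$ supported on $[-\eps,\eta]$ with fixed mean $\mu$. Since $g$ is convex, and we are maximizing a convex functional of the distribution subject to two linear constraints (total mass $1$ and mean $\mu$), the maximum is attained at an extreme point of the feasible set of measures, which is supported on at most two points. Hence it suffices to prove the bound when $X$ takes value $-\eps$ with probability $p$ and $\eta$ with probability $1-p$, where $p$ is determined by $-p\eps + (1-p)\eta = \mu$, i.e. $p = (\eta-\mu)/(\eta+\eps)$ and $1-p = (\eps+\mu)/(\eta+\eps)$ (note $|\mu|\le\eta$ is needed here; if $\mu<-\eps$ the claim is vacuous or handled separately, but since $X\ge-\eps$ we automatically have $\mu\ge-\eps$, and we should also note the right-hand side is only meaningful/correct when $\eps+|\mu|$ and $\eta-|\mu|$ behave as expected — I'd double check the edge case $\mu<0$ versus $\mu\ge0$).

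For the two-point distribution, I would then compute directly
\[
\E[\sqrt{1+cX^2}] = p\sqrt{1+c\eps^2} + (1-p)\sqrt{1+c\eta^2},
\]
and want to show this is at most $\sqrt{1 + c\eta(\eps+|\mu|)}$. The cleanest route is to square, or better, to use concavity of $t\mapsto\sqrt{1+ct}$: since $\sqrt{1+c\eps^2}$ and $\sqrt{1+c\eta^2}$ are values of the concave function $h(t)=\sqrt{1+ct}$ at $t=\eps^2$ and $t=\eta^2$, Jensen gives
\[
p\,h(\eps^2) + (1-p)\,h(\eta^2) \le h\bigl(p\eps^2 + (1-p)\eta^2\bigr) = \sqrt{1 + c\bigl(p\eps^2+(1-p)\eta^2\bigr)}.
\]
So it remains to check the purely algebraic inequality $p\eps^2 + (1-p)\eta^2 \le \eta(\eps+|\mu|)$. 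Substituting $p=(\eta-\mu)/(\eta+\eps)$, the left side becomes $\frac{(\eta-\mu)\eps^2 + (\eps+\mu)\eta^2}{\eta+\eps}$; after expanding, the numerator is $\eps^2\eta - \mu\eps^2 + \eps\eta^2 + \mu\eta^2 = \eps\eta(\eps+\eta) + \mu(\eta^2-\eps^2) = (\eps+\eta)(\eps\eta + \mu(\eta-\eps))$, so the left side simplifies neatly to $\eps\eta + \mu(\eta-\eps)$. Thus I need $\eps\eta + \mu(\eta-\eps) \le \eta\eps + \eta|\mu|$, i.e. $\mu(\eta-\eps)\le\eta|\mu|$, which holds because $\eta-\eps\le\eta$ (using $\eps\ge0$) when $\mu\ge0$, and trivially when $\mu<0$ since then the left side is negative while the right side is nonnegative.

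The main obstacle I anticipate is making the reduction to two-point distributions fully rigorous — justifying that the supremum of the convex functional $\mu\mapsto\E[\sqrt{1+cX^2}]$ over the weak-* compact, convex set of probability measures on $[-\eps,\eta]$ with given mean is attained at an extreme point, and that extreme points of this moment-constrained set are supported on at most two atoms. If one wants to avoid measure-theoretic machinery, an alternative is the direct approach: fix that $X\in[-\eps,\eta]$ implies the pointwise bound $(X+\eps)(\eta-X)\ge 0$, i.e. $X^2 \le (\eta-\eps)X + \eps\eta$; taking expectations gives $\E[X^2]\le(\eta-\eps)\mu+\eps\eta$, and then by concavity of $h$, $\E[\sqrt{1+cX^2}] = \E[h(X^2)] \le h(\E[X^2]) \le h\bigl((\eta-\eps)\mu+\eps\eta\bigr) = \sqrt{1+c((\eta-\eps)\mu + \eps\eta)} \le \sqrt{1+c\eta(\eps+|\mu|)}$, where the last step is the same elementary inequality $\eps\eta+(\eta-\eps)\mu\le\eta(\eps+|\mu|)$ established above. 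This second argument is shorter and self-contained, so I would present that one, using only the pointwise quadratic bound and Jensen's inequality for the concave square-root function.
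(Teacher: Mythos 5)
Your proposal is correct, and your preferred (second) argument is essentially a reorganization of the paper's proof rather than a truly different one: the paper takes the affine interpolation $l(x)$ of the convex function $\sqrt{1+cx^2}$ through the endpoints $-\eps,\eta$, bounds $\E[\sqrt{1+cX^2}]\le l(\mu)$, and then uses concavity of $\sqrt{\cdot}$ to collapse $l(\mu)$ into $\sqrt{1+c(\eta\eps+(\eta-\eps)\mu)}$; you instead take the chord of $x\mapsto x^2$ at the same endpoints, i.e.\ the pointwise bound $X^2\le(\eta-\eps)X+\eps\eta$, and apply Jensen to the concave increasing map $t\mapsto\sqrt{1+ct}$, arriving at exactly the same intermediate quantity and the same closing inequality $\eps\eta+(\eta-\eps)\mu\le\eta(\eps+|\mu|)$. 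The two proofs use the same two ingredients (endpoint secant on $[-\eps,\eta]$ and concavity of the square root) in opposite order; yours is arguably marginally cleaner since the pointwise quadratic bound is more transparent than writing down the interpolant of $\sqrt{1+cx^2}$, and it needs monotonicity of $\sqrt{1+ct}$ (which holds as $c\ge 0$) to pass from $\E[X^2]$ to its upper bound. Your first sketched route, via extreme points of the moment-constrained set of measures, is correct in outcome (the extremal two-point distribution reproduces precisely the paper's weights $\frac{\eta-\mu}{\eta+\eps},\frac{\mu+\eps}{\eta+\eps}$) but invokes machinery the problem does not need, and you were right to discard it in favor of the self-contained argument.
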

\begin{proof}
Let $l(x) = \frac{\eta-x}{\eta+\eps} \sqrt{1+c \eps^2} +
\frac{x+\eps}{\eta+\eps} \sqrt{1+c \eta^2}$ denote the unique affine
interpolation of $\sqrt{1+cx^2}$ through the points $\set{-\eps, \eta}$. By
convexity of $\sqrt{1+cx^2}$, we have that $l(x) \geq \sqrt{1+cx^2}$ for all $x \in
[-\eps,\eta]$. It follows that
$\E[\sqrt{1+cX^2}]\leq \E[l(X)]= l(\E[X]) = l(\mu)$, where the first
equality holds since $l$ is affine.  From here, we get that
\begin{align*}
l(\mu) &= \frac{\eta-\mu}{\eta+\eps} \sqrt{1+c \eps^2} +
          \frac{\mu+\eps}{\eta+\eps} \sqrt{1+c \eta^2} \\
			 &\leq \sqrt{1+c\left(\frac{\eta-\mu}{\eta+\eps} \eps^2 +
			       \frac{\mu+\eps}{\eta+\eps}\eta^2\right)}
          \quad \left(\text{by concavity of $\sqrt{x}$}\right) \\
			 &= \sqrt{1+c\left(\eta \eps + (\eta-\eps)\mu \right)}
			 \leq \sqrt{1+c \eta(\eps + |\mu|)}
          \quad \left(\text{since $\eps \leq \eta$}\right) \text{,}
\end{align*}
as needed.
\qued \end{proof}

The crucial part of the analysis is to bound the volume increase of $P_A$
at every rescaling iteration.

\begin{lemma}
\label{lem:volume-increase} Let $A\in\R^{m\times n}$ and let $r=\rk(A)$. For some $0 < \eps \le
1/(11 r)$, let  $v \in \R^m$, $\|v\|=1$, such that $\hat{a}_j^\T   v \geq
-\eps~\forall j \in [n]$. Let $T = (I+vv^\T  )$, and let $A' = TA$. Then
\begin{enumerate}[(i)]
\item $TP_{A}\subseteq (1+3\varepsilon)P_{A'}$,
\item If $v\in\im(A)$, then $\vol_r(P_{A'}) \geq \nicefrac32\vol_r(P_A)$.
\end{enumerate}
\end{lemma}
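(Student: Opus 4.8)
The two parts concern, respectively, an upper bound on the image $TP_A$ inside a scaled copy of $P_{A'}$, and a lower bound on the volume of $P_{A'}$. I would handle them in that order, since (i) is a geometric containment that feeds directly into (ii).

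For part (i), the plan is to work with the support function / Minkowski‑gauge description of $P_A$. Recall $P_A = \conv(\hat A)\cap(-\conv(\hat A))$, so a point $z$ lies in $P_A$ iff it is a convex combination $z=\hat A\lambda$ with $\lambda\ge 0$, $\vec e^\T\lambda=1$, and simultaneously $-z$ is such a combination. I would take an arbitrary $z\in P_A$, write $z=\sum_j \lambda_j \hat a_j$, and consider $Tz=\sum_j\lambda_j\, T\hat a_j$. Since $\|\hat a_j\|=1$ and $\hat a_j^\T v\ge -\eps$, the stretched vector $T\hat a_j=\hat a_j+(\hat a_j^\T v)v$ has norm $\|T\hat a_j\|^2 = 1+3(\hat a_j^\T v)^2 \le 1+3$ in general, but more importantly its \emph{normalization} $\widehat{T\hat a_j}$ is a column of $\hat{A'}$ (up to the rescaling of column lengths — here one must be slightly careful, since $A'=TA$ is not normalized, but $\conv(\hat{A'})=\conv(\widehat{TA})$ only depends on directions, and $P_{A'}$ is defined via $\hat{A'}$). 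So $T\hat a_j = \|T\hat a_j\|\cdot \widehat{T\hat a_j}$, and thus $Tz = \sum_j (\lambda_j\|T\hat a_j\|)\,\widehat{T\hat a_j}$ is a nonnegative combination of columns of $\hat{A'}$ with total weight $\sum_j\lambda_j\|T\hat a_j\| \le \max_j\|T\hat a_j\|$. The key estimate is therefore $\max_j\|T\hat a_j\| = \max_j\sqrt{1+3(\hat a_j^\T v)^2}$. This is not bounded by $1+3\eps$ for \emph{all} $j$ — only those $j$ with $\hat a_j^\T v$ small — so the honest bound is $\sqrt{1+3}=2$ in the worst case, which is too weak. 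The fix, which I expect is the crux, is that the directions with $\hat a_j^\T v$ large positive get \emph{shortened} relative to the strip, so one should instead bound the gauge of $Tz$ in $P_{A'}$ directly: one shows $Tz/(1+3\eps)$ lies in both $\conv(\hat{A'})$ and $-\conv(\hat{A'})$ using that $P_A$ lies in the strip $\{|v^\T z|\le\eps\}$ (as noted in the text just before the lemma), which controls the $v$-component of $Tz$ (it at most doubles, staying $\le 2\eps$) while the component orthogonal to $v$ is unchanged. Combining the strip bound with the fact that $\conv(\hat A)\subseteq\conv(\hat{A'})$ after unstretching gives the factor $(1+3\eps)$; carrying out this decomposition carefully is the main technical obstacle.

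For part (ii), once (i) is in hand the volume statement is short. Assume $v\in\im(A)$, so $T$ maps $\im(A)$ to itself and acts as the identity on $\ker(A)\supseteq\ker(A)^\perp{}^\perp$; hence $\det$ of $T$ restricted to $\im(A)=\spn(\hat A)$ (the $r$-dimensional space in which $P_A$ and $P_{A'}$ both live) equals $2$, since $T=I+vv^\T$ has eigenvalue $2$ in direction $v$ and $1$ on $v^\perp\cap\im(A)$. Therefore $\vol_r(TP_A)=2\vol_r(P_A)$. On the other hand part (i) gives $\vol_r(TP_A)\le(1+3\eps)^r\vol_r(P_{A'})$. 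Combining,
\[
\vol_r(P_{A'})\ \ge\ \frac{2}{(1+3\eps)^r}\,\vol_r(P_A).
\]
It remains to check $2/(1+3\eps)^r\ge 3/2$, i.e. $(1+3\eps)^r\le 4/3$. Using $\eps\le 1/(11r)$ we get $3\eps r\le 3/11$, and $(1+3\eps)^r\le e^{3\eps r}\le e^{3/11}<4/3$ (since $e^{3/11}\approx 1.314$), which closes the argument. I would present the elementary inequality $e^{3/11}<4/3$ as a one-line numerical check, and Lemma~\ref{lem:exp2} does not seem to be needed for this particular lemma (it is presumably used elsewhere in the norm-decrease analysis), so I would not invoke it here.
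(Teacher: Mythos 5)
Your part (ii) is correct and is essentially the paper's argument: $\det$ of $T$ restricted to $\im(A)$ is $2$, so $\vol_r(TP_A)=2\vol_r(P_A)$, and combining with part (i) and $(1+3\eps)^r\le e^{3/11}<\nicefrac43$ gives the claim. The problem is part (i), where you correctly set up $Tz=\sum_j\lambda_j\|T\hat a_j\|\,\hat a'_j$ with $\|T\hat a_j\|=\sqrt{1+3(v^\T\hat a_j)^2}$ and correctly observe that a termwise bound only yields a factor $2$, but then you abandon this route and replace it with a vague ``decompose $Tz$ into its $v$-component and orthogonal component'' sketch that you yourself flag as the unresolved obstacle. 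That sketch does not close: knowing that the $v$-component of $Tz$ stays at most $2\eps$ says nothing about membership in $(1+3\eps)\conv(\hat A')$ unless you can compare $\conv(\hat A')$ with $\conv(\hat A)$, and the ingredient you invoke for this, ``$\conv(\hat A)\subseteq\conv(\hat A')$ after unstretching,'' is false in general (columns with large positive $v^\T\hat a_j$ tilt toward $v$, and the new hull need not contain the old normalized columns); the precise statement one can extract from $\|T\hat a_j\|\le 2$ and $0\in\conv(\hat A')$ is only $T\conv(\hat A)\subseteq 2\conv(\hat A')$, which is exactly the weak bound you already rejected.

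The missing idea is the one you explicitly set aside: Lemma~\ref{lem:exp2} is used precisely here. You should bound the \emph{weighted sum} $\sum_j\lambda_j\sqrt{1+3(v^\T\hat a_j)^2}$, not the individual terms. View it as $\E[\sqrt{1+3X^2}]$ where $X$ takes value $v^\T\hat a_j\in[-\eps,1]$ with probability $\lambda_j$; its mean is $v^\T z$, and $|v^\T z|\le\eps$ because both $z$ and $-z$ lie in $P_A$ (the strip bound). Convexity of $x\mapsto\sqrt{1+3x^2}$ on $[-\eps,1]$ (the chord bound, which is the content of Lemma~\ref{lem:exp2} with $\eta=1$, $c=3$) then gives
\begin{equation*}
\sum_j\lambda_j\sqrt{1+3(v^\T\hat a_j)^2}\;\le\;\sqrt{1+3(\eps+|v^\T z|)}\;\le\;\sqrt{1+6\eps}\;\le\;1+3\eps,
\end{equation*}
and since the coefficients are nonnegative with total weight at most $1+3\eps$ and $0\in\conv(\hat A')$ (as $P_{A'}\neq\emptyset$), this places $Tz$ in $(1+3\eps)\conv(\hat A')$; symmetry handles $-Tz$. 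Without this averaging step (or an equivalent exploitation of the fact that columns with large $v^\T\hat a_j$ must carry small total weight $\lambda_j$, forced by $|\E[X]|\le\eps$ and $X\ge-\eps$), the constant $1+3\eps$ is out of reach, so as written the proposal has a genuine gap in part (i). A small additional slip: in part (ii), $T$ acts on $\R^m$, so the remark about it being the identity on $\ker(A)$ should read $v^\perp$ (or $\im(A)^\perp$), though your eigenvalue computation on $\im(A)$ is correct.
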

\begin{proof} {\bf (i)} The statement is trivial if $P_A=\emptyset$, thus we assume $P_A\neq\emptyset$. Consider an
arbitrary point $z\in P_A$. By symmetry, it suffices to show $Tz\in
(1+3\varepsilon)\conv(\hat A')$.
By definition, there exists $\lambda\in\R^n_+$ such that $\sum_{j=1}^n \lambda_j= 1$ and $z=\sum_{j=1}^n \lambda_j \hat a_j$.
Note that
\[
Tz=\sum_{j=1}^n \lambda_j T\hat a_j=\sum_{j=1}^n (\lambda_j \|T\hat a_j\|) \hat a'_j
=\sum_{j=1}^n \lambda_j \sqrt{1+3(v^\T   \hat a_j)^2}\,\hat a'_j\text{.} \]
Since $P_{A'}\neq\emptyset$, it follows that $0\in \conv(\hat A')$, thus it suffices to show that $\sum_{j=1}^n \lambda_j \sqrt{1+3(v^\T   \hat a_j)^2}\leq
1+3\varepsilon\text{.}$ The latter expression is of the form $\E[\sqrt{1+3X^2}]$ where $X$ is a random
variable supported on $[-\varepsilon, 1]$ and $|\E[X]| = |\sum_{j=1}^n \lambda_j
v^\T   \hat{a}_j| = |v^\T   z|$. Note that $|v^\T  z| \leq \varepsilon$ because both $z$
and $-z$ are in $P_A$. Hence, by Lemma~\ref{lem:exp2}, \[\sum_{j=1}^n \lambda_j \sqrt{1+3(v^\T   \hat a_j)^2}\le
\sqrt{1+3(2\varepsilon)}\le 1+3\varepsilon.\]

\noindent{\bf (ii)} Note that $\vol_r(TP_A)=2\vol_r(P_A)$ as $\det(T)=2$. Thus
we obtain $\vol_r(P_{A'})\ge 2\vol_r(P_A)/(1+3\varepsilon)^r\ge \nicefrac32\vol_r(P_A)$,
since $(1+3\varepsilon)^r\le (1+{3}/{(11m)})^r\leq e^{\nicefrac{3}{11}}\leq \nicefrac43$.
\qued
\end{proof}

\begin{lemma}
\label{lem:delta-rounding} Let $A\in\R^{m\times n}$ with $\|a_j\|=1$ for all $j\in[n]$. Given $x\in\R^n$ such that $x\geq\vec{e}$, if $\|A x\| <|\rho_{(A,-A)}|$ then $\Pi_A^K x>0$. In particular, if $\rho_A<0$, then $\Pi_A^K x>0$ whenever $\|A x\| <|\rho_{A}|$.
\end{lemma}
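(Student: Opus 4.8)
Let $A\in\R^{m\times n}$ with $\|a_j\|=1$ for all $j\in[n]$. Given $x\in\R^n$ with $x\geq\vec e$, if $\|Ax\|<|\rho_{(A,-A)}|$ then $\Pi_A^K x>0$; in particular, if $\rho_A<0$, then $\Pi_A^Kx>0$ whenever $\|Ax\|<|\rho_A|$.

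The plan is to exploit the orthogonal splitting $\Pi_A^I + \Pi_A^K = I_n$ of $\R^n$ into $\im(A^\T) \oplus \ker(A)$. Put $p := \Pi_A^I x$, so that $\Pi_A^K x = x - p$; since $x \ge \vec e$, in order to conclude $\Pi_A^K x > 0$ it suffices to prove $\|p\|_2 < 1$, as then $p_i \le \|p\|_2 < 1 \le x_i$ for every $i$. Set $y := Ax$. If $y = 0$ then $x \in \ker(A)$ and $\Pi_A^K x = x \ge \vec e > 0$, so from now on assume $y \neq 0$.

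The heart of the argument is to rewrite $y$ as a cheap combination of the columns of $A$. Apply Lemma~\ref{lem:rho} to the matrix $(A, -A)$, whose normalized columns are $\hat a_1, \dots, \hat a_n$ together with $-\hat a_1, \dots, -\hat a_n$. The polytope $\conv(\hat a_1, \dots, \hat a_n, -\hat a_1, \dots, -\hat a_n)$ is centrally symmetric and linearly spans $\im(A)$, hence contains $0$ in its relative interior; therefore $|\rho_{(A,-A)}|$, being the distance from $0$ to its relative boundary, is the radius of a Euclidean ball around $0$ inside $\im(A)$ that is contained in this polytope. Since $y \in \im(A)$ and $\|y\| < |\rho_{(A,-A)}|$, the point $(|\rho_{(A,-A)}|/\|y\|)\, y$ lies in the polytope, so it can be written as $\sum_{j} \delta_j \hat a_j$ with $\|\delta\|_1 \le 1$; scaling by $\|y\|/|\rho_{(A,-A)}| < 1$ and using $a_j = \hat a_j$ produces $\gamma \in \R^n$ with $A\gamma = y$ and $\|\gamma\|_1 \le \|y\|/|\rho_{(A,-A)}| < 1$.

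It remains to combine the two. From $A\gamma = y = Ax$ we get $x - \gamma \in \ker(A)$, so $\Pi_A^I$ maps $x$ and $\gamma$ to the same point: $p = \Pi_A^I x = \Pi_A^I \gamma$. Orthogonal projections are nonexpansive and $\|\cdot\|_2 \le \|\cdot\|_1$, so $\|p\|_2 = \|\Pi_A^I \gamma\|_2 \le \|\gamma\|_2 \le \|\gamma\|_1 < 1$, as required; this proves the first assertion. For the ``in particular'' statement, suppose $\rho_A < 0$: then $0$ lies in the relative interior of $\conv(\hat A)$, so the ball of radius $|\rho_A|$ around $0$ lies in $\conv(\hat A)$ and, by central symmetry, also in $-\conv(\hat A)$, hence in $\conv(\hat a_1, \dots, \hat a_n, -\hat a_1, \dots, -\hat a_n)$; this gives $|\rho_A| \le |\rho_{(A,-A)}|$, so $\|Ax\| < |\rho_A|$ falls under the case just treated.

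The one point needing care is the second paragraph: one must justify that $|\rho_{(A,-A)}|$ genuinely is an inradius of the symmetric column polytope — which is where central symmetry, forcing $0$ into the relative interior, is used — and one must arrange the rescaling so that the \emph{strict} hypothesis $\|Ax\| < |\rho_{(A,-A)}|$ is exactly what yields $\|\gamma\|_1 < 1$ strictly; a non-strict bound would only give $\Pi_A^K x \ge 0$. The rest is routine linear algebra.
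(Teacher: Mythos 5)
Your proof is correct, but it takes a genuinely different route from the paper's. The paper works componentwise with the explicit projection formula: writing $(\Pi_A^K x)_j = x_j - a_j^\T (AA^\T)^+ Ax \ge 1 - \|(AA^\T)^+a_j\|\,\|Ax\|$, it reduces the claim to the bound $\delta \eqdef \min_j \|(AA^\T)^+a_j\|^{-1} \ge |\rho_{(A,-A)}|$, which it proves by testing the min--max definition of $\rho_{(A,-A)}$ on the single direction $z=(AA^\T)^+a_k$ for the worst column $k$ and using that the entries of the orthogonal projection matrix onto $\im(A^\T)$ are at most $1$ in absolute value. You instead argue ``primally'': you reduce to showing $\|\Pi_A^I x\|_2<1$, use the geometric characterization of $|\rho_{(A,-A)}|$ (Lemma~\ref{lem:rho}) as the inradius of the centrally symmetric polytope $\conv(\pm\hat a_1,\dots,\pm\hat a_n)$ to produce $\gamma$ with $A\gamma=Ax$ and $\|\gamma\|_1\le \|Ax\|/|\rho_{(A,-A)}|<1$, and finish with $\Pi_A^I x=\Pi_A^I\gamma$, nonexpansiveness of orthogonal projection, and $\|\cdot\|_2\le\|\cdot\|_1$; your handling of the subtle points (strictness, central symmetry forcing $0$ into the relative interior, and $|\rho_A|\le|\rho_{(A,-A)}|$, which the paper asserts without proof) is sound. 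Your version is more geometric and avoids the pseudo-inverse and entrywise projection bounds entirely; the paper's version is a shorter computation once $\delta$ is introduced and yields the explicit, possibly sharper, termination threshold $\delta\ge|\rho_{(A,-A)}|$ in terms of the columns of $A$.
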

\begin{proof} Let $\Pi:= \Pi_A^K$ and define $\delta\eqdef \min_{j\in[n]} \|(AA^\T)^{+}a_j\|^{-1}$. Observe that, if $\|A x\| < \delta$, then $\Pi x>0$. Indeed, for $j \in [n]$, $(\Pi x)_j =   x_j - a_j^\T (AA^\T)^+  y \geq 1 - \|(AA^\T)^+ a_j\|\| Ax\|    > 1 - \delta^{-1}\delta = 0$, as required.

Thus it suffices to show that $\delta\geq \displaystyle |\rho_{(A,-A)}|$. Let $k:=\arg\max_{j\in [n]}\|(AA^\top)^+ a_j\|$, define $z:=(AA^\T)^+ a_k$, and note that $\|z\|=1/\delta$. Note that $\rho_{(A,-A)}<0$, thus
\[
|\rho_{(A,-A)}| = -\rho_{(A,-A)} = \min_{y\in\im(A)\smz} \max_{j \in [n]} |a_j^\T \hat y|\leq \max_{i \in [n]} |a_i^\T \hat z|=\delta\max_{j \in [n]} |{\Pi_{jk}}|
                \leq \delta
\text{.}
\]
where the last inequality follows from the fact that $|\Pi_{ij}|\leq 1$ for all $i,j\in [n]$. The last part of the statement follows from the fact that $|\rho_A|\leq |\rho_{(A,-A)}|$
\qued \end{proof}

\begin{lemma}\label{lemma:rescaling stretch} Let $v\in\R^m$, $\|v\|=1$. For any
$y,\bar y\in \R^m$ such that $y=(I+vv^\T)\bar y$, we have $\|\bar y\|\leq \|y\|$.
\end{lemma}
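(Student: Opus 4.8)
The plan is to exploit the spectral structure of $T = I + vv^\T$. Since $\|v\|=1$, the matrix $vv^\T$ is the orthogonal projector onto $\spn(v)$, so $T$ acts as multiplication by $2$ on $\spn(v)$ and as the identity on the orthogonal complement $v^\perp$. In particular $T$ is invertible, and $T^{-1}$ has eigenvalues $\tfrac12$ (on $\spn(v)$) and $1$ (on $v^\perp$), all at most $1$ in absolute value.

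Concretely, I would decompose $\bar y = \bar y_\parallel + \bar y_\perp$ with $\bar y_\parallel = (v^\T\bar y)\, v \in \spn(v)$ and $\bar y_\perp \perp v$. Then $vv^\T \bar y = (v^\T\bar y)\,v = \bar y_\parallel$, so $y = T\bar y = \bar y + \bar y_\parallel = \bar y_\perp + 2\bar y_\parallel$. Since $\bar y_\perp$ and $\bar y_\parallel$ are orthogonal, the Pythagorean identity gives $\|y\|^2 = \|\bar y_\perp\|^2 + 4\|\bar y_\parallel\|^2 \geq \|\bar y_\perp\|^2 + \|\bar y_\parallel\|^2 = \|\bar y\|^2$, and taking square roots yields the claim.

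There is no genuine obstacle here; the only point worth stating explicitly is that $T$ is invertible (so that $\bar y$ is determined by $y$ and $v$), which is immediate from the eigenvalue description above. Equivalently, the whole argument can be compressed to $\|\bar y\| = \|T^{-1} y\| \leq \|T^{-1}\|_{\mathrm{op}}\, \|y\| = \|y\|$, using that the operator norm of $T^{-1}$ is $1$.
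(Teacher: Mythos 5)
Your proof is correct and is essentially the paper's argument: both reduce to the identity $\|y\|^2=\|\bar y\|^2+3(v^\T\bar y)^2\ge\|\bar y\|^2$, which the paper gets by directly expanding $\|\bar y+(v^\T\bar y)v\|^2$ and you get via the orthogonal decomposition along $v$. The closing remark about $\|T^{-1}\|_{\mathrm{op}}=1$ is a fine equivalent packaging but adds nothing beyond the same spectral observation.
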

\begin{proof}
We have $\|y\|^2=\|\bar y+(v^\T \bar y)v\|^2=\|\bar y\|^2+2(v^\T \bar y)^2+(v^\T \bar y)^2\|v\|^2\geq \|\bar y\|^2$.
\qued \end{proof}

\begin{proof}[Proof of Theorem~\ref{thm:main-matrix}]
We use $\bar A$ for the input matrix and $A$ for the current matrix
during the algorithm, so that, after $k$ rescalings, $A=(I+v_1v_1^\T)\cdots
(I+v_kv_k^\T)\bar A$ for some vectors $v_1,\ldots,v_k\in \im(A)$ with
$\|v_i\|=1$ for $i\in [n]$.

Let $\rho:=|\rho_{\bar A}|$
and $\Pi:=\Pi^K_{\bar A}$. Note that $\ker(A)=\ker(\bar A)$, and hence
$\Pi^K_A=\Pi$ throughout the algorithm. The matrix $\Pi$ needs to be computed only once, requiring $O(n^2m)$ arithmetic operations, which is clearly dominated by the stated running-time of the algorithm.

Let $x$ and
$y=Ax$ be the vectors computed in every iteration, and define $\bar
y:=\bar Ax$ and $\bar x=\Pi x$. Lemma~\ref{lemma:rescaling stretch} implies that $\|\bar
y\|\leq \|y\|$, thus it follows from Lemma \ref{lem:delta-rounding} that $\bar x>0$ whenever $\|y\|< \rho$.  This shows that the algorithm terminates with the $\bar x$ to \eqref{eq:main problem}  as soon as $\|y\|<\rho$.

As previously discussed, Lemma~\ref{lem:volume-increase} implies that
the number $K$ of rescalings cannot exceed
$m\log_{\nicefrac{3}{2}}{|\rho|^{-1}}$. At every rescaling, $\|y\|$
increases by a factor 2. In every iteration where the DV update is
applied, $\|y\|$  decreases by a factor $\sqrt{1-\varepsilon^2}$
according to \eqref{eq:decrease in norm epsilon}. Initially, $y=\bar
A\vec{e}$, therefore  $\|y\|\le {n}$ since all columns of $A$ have
unit norm. This shows that the number of DV iterations is bounded by $\kappa$, where $\kappa$ is the smallest integer such that $n^2(1-\varepsilon^2)^\kappa 4^K<\rho^2$.
Taking the logarithm on both sides, and using the fact that
$\log(1-\varepsilon^2)<-\varepsilon^2$, it follows that $\kappa\in
O(m^2(\log n+K+\log|\rho|^{-1}))=O(m^2\log n+m^3\log{|\rho|^{-1}})$.

We can implement every DV update in $O(n)$ time, at the cost of an
$O(n^2)$ time preprocessing at every rescaling, as explained next.
After every rescaling we compute the matrix
$F:=A^\T A$  and the norms of the columns of $A$. Computing the norms requires time $O(nm)$. The matrix $F$ is updated as $F:=A^\T(I+\hat y\hat y^\T)^2
A=AA^\T+3(A^\T\hat y\hat y^\T A)=F+3 z z^\T/\|y\|^2$, which requires
time $O(n^2)$.

Further, at every DV update, we maintain the vectors $z=A^\T y$ and $\bar x=\Pi x$.
Using the vector $z$, we can compute  $\arg\min_{j\in [n]}
\hat a_j^\T   \hat y=\arg\min_{j\in [n]} z_j/\|a_j\| $ in time
$O(n)$ at any DV update. We also need to
recompute $y,z$, and $\bar x$. Using $F=[f_1,\ldots,f_n]$, these can be obtained as $y:=y-(\hat a_k^\T   y) \hat a_k$,
$z:=z-f_k(\hat a_k^\T   y)/\|a_k\|$, and $\bar x:=\bar x-
\Pi_{k}(\hat a_k^\T   y) /\|a_k\|$, where $\Pi_k$ denotes the $k$th column of $\Pi$. These updates altogether take $O(n)$
time.

Therefore the number of arithmetic operations is $O(n)$
times the number of DV updates plus $O(n^2)$ times the number of
rescalings. The overall running time estimate
follows.
\qued \end{proof}

\section{The Full Support Image Algorithm}\label{sec:image-full}
The Image Algorithm maintains a positive definite matrix $Q$,
initialized as $Q=I_m$. We use the von Neumann algorithm
(Algorithm~\ref{alg:Neumann}) as the first order method, with the
scalar product $\scalar{.,.}_Q$.
Within $O(m^2)$ iterations, the von Neumann algorithm obtains a vector
$y\in\conv(a_1/\|a_1\|_Q,\ldots,a_n/\|a_n\|_Q)$ with  $\|y\|_Q\le
\varepsilon$. Then, we update the matrix $Q$, using the coefficients of the convex
combination.

Algorithm~\ref{alg:Neumann} is same as von Neumann's algorithm as described by
Dantzig  \cite{Dantzig-92}, with the standard scalar product replaced by
$\scalar{.,.}_Q$  for a
 matrix $Q\in\bb{S}^m_{++}$, and using the normalized columns $a_i/\|a_i\|_Q$.
We remark that running the algorithm with $\scalar{.,.}_Q$ is the same as running it for the standard scalar product for the unit vectors $Q^{1/2}a_i/\|Q^{1/2}a_i\|_2$.

\renewcommand{\algorithmicrequire}{\textbf{Input:}}
\renewcommand{\algorithmicensure}{\textbf{Output:}}

\begin{figure}[htb!]
\begin{center}
\begin{minipage}{0.85\textwidth}
\begin{algorithm}[H]
\raggedright
  \begin{algorithmic}[1]
    \Require{A matrix $A\in\R^{m\times n}$, a positive definite
      matrix $Q\in \R^{m\times m}$ and an $\varepsilon>0$.}
    \Ensure{Vectors $x\in \R^n$, $y\in \R^m$  such that
      $y=\sum_{i=1}^n{x_i a_i/\|a_i\|_Q}$, $\vec e^\top x=1$, $x\ge 0$, and
      either  $A^\T Qy>0$ or  $\|y\|_Q\le \varepsilon$.}
    \State Set $x:=\vec e_1$, $y:={a_1}/{\|a_1\|_Q}$.
    \While{$\|y\|_Q>\varepsilon$}
        \If{$\scalar{a_i,y}_Q  >0$ for all $i\in[n]$} \Return{$x$ and $y$ satisfying $A^\T Qy>0$.  }
        \State{Terminate.}
    \Else{ Select $k\in[n]$ such that $\scalar{a_k,y}_Q \leq 0$;}
      \State Let $\lambda:=\displaystyle\frac{\scalar{y-a_k/\|a_k\|_Q,y}_Q}{\|y-a_k/\|a_k\|_Q\|^2_Q}$;
       \State {\bf update} $\displaystyle x:=(1-\lambda)x+\lambda
       \vec{e}_k$; \quad $\displaystyle y:=(1-\lambda)y+\lambda\frac{a_k}{\|a_k\|_Q}$;
   \EndIf
	\EndWhile
\State{\Return{the vectors $(x,y)$.}}
 \end{algorithmic}
\caption{The von Neumann algorithm}\label{alg:Neumann}
\end{algorithm}
\end{minipage}
\end{center}
\end{figure}

\begin{lemma}\label{lemma:VN running time}
For a given $\varepsilon>0$, the von Neumann algorithm terminates in at
most  $\lceil 1/\varepsilon^2\rceil$
updates. Each iteration requires $O(n)$ arithmetic operations, provided that the matrix $A^\T QA$ has been precomputed.
\end{lemma}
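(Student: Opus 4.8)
The plan is to analyze the two assertions of Lemma~\ref{lemma:VN running time} separately. For the iteration count, I would follow the classical Dantzig analysis of von Neumann's algorithm, carried out in the $Q$-inner product. Writing $y^t$ for the iterate after $t$ updates and $g_i = a_i/\|a_i\|_Q$ for the normalized columns (so $\|g_i\|_Q = 1$), the key is the standard potential argument: when we pick $k$ with $\scalar{a_k,y}_Q \le 0$ and take the optimal step $\lambda$ to the point of minimum $Q$-norm on the segment $[y^t, g_k]$, one shows $1/\|y^{t+1}\|_Q^2 \ge 1/\|y^t\|_Q^2 + 1$ — exactly Dantzig's recursion, since the only facts used are $\|g_k\|_Q = 1$, $\scalar{g_k, y^t}_Q \le 0$, and optimality of $\lambda$, all of which are preserved verbatim when $\scalar{\cdot,\cdot}$ is replaced by $\scalar{\cdot,\cdot}_Q$. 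Since $\|y^1\|_Q = \|g_1\|_Q = 1$, after $t$ updates we get $\|y^{t+1}\|_Q^2 \le 1/(t+1)$, hence once $t+1 \ge 1/\varepsilon^2$ — i.e. after at most $\lceil 1/\varepsilon^2\rceil$ updates — we have $\|y\|_Q \le \varepsilon$ and the algorithm halts. (If instead $\scalar{a_i,y}_Q > 0$ for all $i$ at some point, it halts even earlier with the claimed output.) I would also note the invariants $\vec e^\top x = 1$, $x \ge 0$, $y = \sum_i x_i g_i$ are trivially maintained by the convex update.

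For the per-iteration cost, the point is that with $M \eqdef A^\T Q A$ precomputed, every quantity the algorithm touches is a cheap function of $M$, $x$, and the scalars $\|a_i\|_Q = \sqrt{M_{ii}}$. Specifically I would maintain the vector $w \eqdef A^\T Q y \in \R^n$ alongside $x$ and $y$; note $\scalar{a_i, y}_Q = w_i$, so the sign test ``$\scalar{a_i,y}_Q > 0$ for all $i$'' and the selection of $k$ both cost $O(n)$. The step length $\lambda$ needs $\scalar{y - g_k, y}_Q = \|y\|_Q^2 - w_k/\sqrt{M_{kk}}$ and $\|y - g_k\|_Q^2 = \|y\|_Q^2 - 2 w_k/\sqrt{M_{kk}} + 1$, where $\|y\|_Q^2 = \scalar{x, w}$ is maintained in $O(n)$; so $\lambda$ is $O(1)$ given these. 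The update $x := (1-\lambda)x + \lambda \vec e_k$ is $O(n)$, and the corresponding update to $w$ is $w := (1-\lambda) w + \lambda\, M e_k / \sqrt{M_{kk}} = (1-\lambda)w + \lambda\, M_{\cdot k}/\sqrt{M_{kk}}$, which reads off the $k$th column of the precomputed $M$ in $O(n)$. Thus each iteration is $O(n)$. One need never form $y \in \R^m$ explicitly during the loop; if it is wanted on output it costs a single $O(mn)$ computation $y = \sum_i x_i a_i/\|a_i\|_Q$, which does not affect the per-iteration bound.

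I do not expect a genuine obstacle here: the only thing to be careful about is making sure the minimum-norm / optimal-step argument is stated in a way that is manifestly inner-product-agnostic, so that Dantzig's $1/\sqrt{t}$ bound transfers without re-deriving it — equivalently, one may simply observe (as the text already remarks) that running Algorithm~\ref{alg:Neumann} with $\scalar{\cdot,\cdot}_Q$ is literally running ordinary von Neumann on the Euclidean-normalized vectors $Q^{1/2}a_i/\|Q^{1/2}a_i\|_2$, and invoke Dantzig's bound as a black box. The second, bookkeeping, half is routine once one commits to carrying $w = A^\T Q y$ and the scalar $\|y\|_Q^2$ as auxiliary state.
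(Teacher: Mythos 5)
Your proposal is correct and follows essentially the same route as the paper: the iteration bound is obtained by transferring Dantzig's $1/\sqrt{t}$ analysis to the $Q$-inner product (equivalently, running ordinary von Neumann on $Q^{1/2}a_i/\|Q^{1/2}a_i\|_2$), and the $O(n)$ per-iteration cost comes from maintaining the vector $A^\T Q y$ (your $w$, the paper's $z$) and updating it via a column of the precomputed $A^\T Q A$. The only cosmetic difference is that you avoid maintaining $y\in\R^m$ inside the loop, whereas the paper updates it explicitly; this does not change the bound.
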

 \begin{proof}
The  $\lceil 1/\varepsilon^2\rceil$ bound on the number of iterations is due to Dantzig \cite{Dantzig-92}.
If we maintain the vector $z:=A^\T Q y$, then checking whether or not $\scalar{a_i,y}_Q>0$ for all $i\in[n]$ amounts to checking if $z>0$,
which can be performed in time $O(n)$. Recomputing $x':=(1-\lambda)x+\lambda\vec{e}_k$ and $y':=(1-\lambda)y+\lambda{a_k}/{\|a_k\|_Q}$ requires time $O(n)$.
Recomputing $z':=A^\T Qy$ requires to compute
$z':=(1-\lambda)z+\lambda A^\T Q a_k/\|a_k\|_Q$, which can also be done in time $O(n)$ provided that $A^\T Q A$ has been precomputed.
 \qued \end{proof}

Algorithm~\ref{alg:dual-alg-full} shows the Full Support Image
Algorithm. We set the same $\varepsilon=\const$ as in the kernel algorithm.
Without loss of generality, we can  assume that the matrix A has full row rank, that is,
$\im(A)=\R^m$.
\renewcommand{\algorithmicrequire}{\textbf{Input:}}
\renewcommand{\algorithmicensure}{\textbf{Output:}}
\begin{figure}[htb]
\begin{center}
\begin{minipage}{0.85\textwidth}
\begin{algorithm}[H]
\raggedright
  \begin{algorithmic}[1]
    \Require{A matrix $A\in\R^{m\times n}$ such that $\rk(A)=m$ and
      \eqref{eq:main dual} is feasible.}
    \Ensure{A feasible solution to  \eqref{eq:main dual}.}
   \State Set $Q:=I_m$, $R:=I_m$. Call \Call{von Neumann}{$A,Q,\varepsilon$} to obtain $(x,y)$.
 \While{$A^\T Qy\not>0$}
    \State {\bf rescale}
   \[R:=\frac{1}{1+\varepsilon}\left(R+\sum_{i=1}^n \frac{x_i}{\|a_i\|_Q^2} a_ia_i^\top \right);\quad  Q:=R^{-1}.\]
    \State Call \Call{von Neumann}{$A,Q,\varepsilon$} to obtain $(x,y)$.
 \EndWhile
\Return{The feasible solution $\bar y:=Qy$ to \eqref{eq:main dual}.}
 \end{algorithmic}
\caption{Full Support Image Algorithm}\label{alg:dual-alg-full}
\end{algorithm}

\end{minipage}
\end{center}
\end{figure}

\begin{theorem}\label{thm:main-matrix-dual} For any input matrix $A\in\R^{m\times
    n}$ such that $\rk(A)=m$ and \eqref{eq:main dual} is feasible,
Algorithm~\ref{alg:dual-alg-full} finds a
feasible solution to \eqref{eq:main dual} by performing
$O\left(m^3\log{\rho_A^{-1}}\right)$ von Neumann iterations. The
total number of arithmetic operations is
$O\left(m^2n^2\log{\rho_A^{-1}}\right)$.
\end{theorem}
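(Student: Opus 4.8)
The plan is to mirror the proof of Theorem~\ref{thm:main-matrix}: bound the length of each call to the von Neumann subroutine, show that every rescaling multiplies a bounded geometric potential by a fixed factor strictly above $1$, and then combine the two facts. By Lemma~\ref{lemma:VN running time} each invocation of the von Neumann subroutine (Algorithm~\ref{alg:Neumann}) terminates within $\lceil 1/\varepsilon^2\rceil=O(m^2)$ updates, since $\varepsilon=\const$. If such a call ever returns $(x,y)$ with $A^\T Qy>0$, then $\bar y:=Qy$ satisfies $A^\T\bar y>0$, so it solves \eqref{eq:main dual} and the algorithm halts; and, as noted before Algorithm~\ref{alg:dual-alg-full}, we may assume $\rk(A)=m$ (this is also what makes the rank-increasing update $R'=\tfrac1{1+\varepsilon}(R+ADA^\T)$ stay positive definite). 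Hence it suffices to prove that the number $K$ of rescalings satisfies $K=O(m\log\rho_A^{-1})$: the total number of von Neumann updates is then $(K+1)\lceil 1/\varepsilon^2\rceil=O(m^3\log\rho_A^{-1})$, and since the potential below is bounded above, only finitely many rescalings occur, so the algorithm must terminate by returning a feasible $\bar y$.

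As the potential I would take $\Phi(Q)\eqdef\vol(\Sigma_A\cap E(R))/\vol(E(R))$ with $R=Q^{-1}$, i.e. the fraction of the ellipsoid $E(R)=Q^{1/2}\B^m$ lying inside the cone $\Sigma_A$. Writing $B\eqdef Q^{1/2}A$ — the matrix whose normalized columns $\hat b_i$ the subroutine actually operates on, via the remark preceding Algorithm~\ref{alg:Neumann} — one has $\Sigma_B=\{z:\hat b_i^\T z\ge 0\ \forall i\}=Q^{-1/2}\Sigma_A$, and a linear change of variables turns $\Phi(Q)$ into $\vol(\Sigma_B\cap\B^m)/\nu_m$, the normalized spherical volume of $\Sigma_B$; in particular $0<\Phi(Q)\le 1$. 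For the starting value $Q=R=I$ one gets $\Phi(I)=\vol(\Sigma_A\cap\B^m)/\nu_m$, and since by Lemma~\ref{lem:rho}(ii) $\Sigma_A$ contains a ball of radius $\rho_A$ centred at a unit vector, $\Sigma_A$ contains a circular cone of half-angle at least $\rho_A$, whence $\Phi(I)\ge c\,\rho_A^{m-1}/\sqrt m$ for an absolute constant $c>0$ and $\log\Phi(I)^{-1}=O(m\log\rho_A^{-1})$. So everything reduces to the volume-increase step.

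This step — the analogue of Lemma~\ref{lem:volume-increase} — is the heart of the matter. At a rescaling the subroutine has returned $(x,y)$ with $\vec e^\T x=1$, $x\ge 0$, $\|y\|_Q\le\varepsilon$. Put $N\eqdef\sum_i x_i\hat b_i\hat b_i^\T=Q^{1/2}\big(\sum_i\tfrac{x_i}{\|a_i\|_Q^2}a_ia_i^\T\big)Q^{1/2}\succeq 0$ and $w\eqdef\sum_i x_i\hat b_i=Q^{1/2}y$, so $\tr N=\sum_i x_i=1$ and $\|w\|=\|y\|_Q\le\varepsilon$. From the update rule, the new matrices satisfy $Q^{1/2}R'Q^{1/2}=\tfrac1{1+\varepsilon}(I+N)$; writing $\Sigma_{B'}=(Q')^{-1/2}\Sigma_A=L\Sigma_B$ with $L=(Q')^{-1/2}Q^{1/2}$ and using that spherical volume is unchanged both by the orthogonal factor of the polar decomposition of $L$ and by scaling (as $\Sigma_B$ is a cone), one obtains $\Phi(Q')=\vol\big((I+N)^{1/2}\Sigma_B\cap\B^m\big)/\nu_m$. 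The crucial inequality is that for every unit $z\in\Sigma_B$,
\[
z^\T N z=\sum_i x_i(\hat b_i^\T z)^2\le\Big(\max_i\hat b_i^\T z\Big)\sum_i x_i(\hat b_i^\T z)=\Big(\max_i\hat b_i^\T z\Big)\,w^\T z\le\|w\|\le\varepsilon,
\]
using $0\le\hat b_i^\T z\le1$. Hence $(1+\varepsilon)^{-1/2}(\Sigma_B\cap\B^m)\subseteq\Sigma_B\cap E(I+N)$, so $\vol(\Sigma_B\cap E(I+N))\ge(1+\varepsilon)^{-m/2}\vol(\Sigma_B\cap\B^m)$; combining this with the identity $(I+N)^{1/2}\Sigma_B\cap\B^m=(I+N)^{1/2}\big(\Sigma_B\cap E(I+N)\big)$ and with $\det(I+N)\ge 1+\tr N=2$ (Lemma~\ref{lem:linalg}(i)) yields
\[
\Phi(Q')=\det(I+N)^{1/2}\,\frac{\vol(\Sigma_B\cap E(I+N))}{\nu_m}\ \ge\ \frac{\sqrt 2}{(1+\varepsilon)^{m/2}}\,\Phi(Q)\ \ge\ \tfrac43\,\Phi(Q),
\]
the last step since $(1+\const)^{m/2}\le e^{1/22}$. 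Therefore $K\le\log_{4/3}\Phi(I)^{-1}=O(m\log\rho_A^{-1})$. The one genuinely delicate point is the bound $z^\T Nz\le\varepsilon$ on the cone and the ensuing passage through the ellipsoid $E(I+N)$; the rest is bookkeeping with the ellipsoids $E(\cdot)$.

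For the arithmetic count, each von Neumann update costs $O(n)$ once $A^\T QA$ is stored (Lemma~\ref{lemma:VN running time}), so all $O(m^3\log\rho_A^{-1})$ updates together cost $O(m^3n\log\rho_A^{-1})$. Each of the $O(m\log\rho_A^{-1})$ rescalings forms $\sum_i\tfrac{x_i}{\|a_i\|_Q^2}a_ia_i^\T$ (the quantities $\|a_i\|_Q^2$ being the diagonal of $A^\T QA$) in $O(m^2n)$, inverts the $m\times m$ matrix $R'$ in $O(m^3)$, and recomputes $A^\T QA$ in $O(m^2n+mn^2)$; since $\rk(A)=m\le n$ this is $O(mn^2)$ per rescaling, i.e.\ $O(m^2n^2\log\rho_A^{-1})$ overall, which dominates. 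This gives the claimed $O(m^2n^2\log\rho_A^{-1})$ bound.
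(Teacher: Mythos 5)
Your proof is correct, but it runs on a genuinely different potential from the paper's. The paper stays in the original space: Lemma~\ref{lem:F-in} shows that the invariant $F_A\subseteq E(R)$ is preserved by each rescaling, and Lemma~\ref{lem:dual-vol-dec} shows that $\det(R)$ grows by a factor at least $16/9$ per rescaling; since $\vol(E(R))=\det(R)^{-1/2}\nu_m$ is squeezed between the fixed lower bound $\vol(F_A)\ge(\rho_A/2)^m\nu_m$ and a geometrically shrinking quantity, the $O(m\log\rho_A^{-1})$ bound on rescalings follows. You instead track the Betke-style, scale-invariant quantity $\Phi(Q)=\vol(\Sigma_A\cap E(R))/\vol(E(R))$, i.e.\ the normalized spherical volume of the rescaled cone $\Sigma_B$ with $B=Q^{1/2}A$, and show it is multiplied by at least $\sqrt2/(1+\varepsilon)^{m/2}\ge 4/3$ at every rescaling while being at most $1$ and at least $2^{-O(m\log\rho_A^{-1})}$ initially. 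The two arguments share their technical core: your inequality $z^\T N z\le\bigl(\max_i\hat b_i^\T z\bigr)\,w^\T z\le\varepsilon$ for unit $z$ in the rescaled cone is exactly the computation inside the proof of Lemma~\ref{lem:F-in}, transported to the coordinates $Q^{1/2}$, and both proofs invoke Lemma~\ref{lem:linalg}(\ref{det-trace-1}) for the determinant gain; indeed your per-rescaling factor is precisely the square root of the paper's determinant factor $2/(1+\varepsilon)^m\ge 16/9$, as it must be since volumes scale as $\det^{1/2}$. What the paper's formulation buys is reusability: the explicit containment $F_A\subseteq E(R)$ and the determinant potential are exactly the tools that the maximum-support analysis of Section~\ref{sec:image-max} builds on (Lemmas~\ref{lem:lower-width-2}, \ref{lem:a-k-Q-bound} and~\ref{lem:remove-kosher}), whereas your relative-volume potential, while more self-contained and projectively natural (no separate invariant to maintain, and the upper bound $\Phi\le 1$ is free), would have to be reworked for that extension. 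Your supporting steps --- the polar-decomposition reduction to $(I+N)^{1/2}\Sigma_B$, the containment $(1+\varepsilon)^{-1/2}(\Sigma_B\cap\B^m)\subseteq\Sigma_B\cap E(I+N)$, the termination argument, and the operation counts (using $m\le n$, which follows from $\rk(A)=m$) --- are all sound and yield the claimed bounds; only the parenthetical remark that $\rk(A)=m$ is needed for $R$ to stay positive definite is superfluous, since $R'$ is a positive multiple of $R$ plus a positive semidefinite matrix in any case.
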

This will be proved in Section~\ref{sec:dual-full-analysis}.
Using Lemma~\ref{lem:numerical-shit}, we obtain the running time in terms of the
encoding length $L$.

\begin{corollary}\label{cor:bit complexity dual} Let $A\in\Z^{m\times n}$ be an
  integer matrix of encoding size $L$. If
  $\rk(A)=m$ and \eqref{eq:main dual} is feasible,
then Algorithm~\ref{alg:dual-alg-full} finds a feasible
  solution of~\eqref{eq:main dual} in $O\left(m^2n^2L\right)$ arithmetic operations.
\end{corollary}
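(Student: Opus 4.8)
The plan is to obtain the corollary as an immediate consequence of Theorem~\ref{thm:main-matrix-dual} together with the bit-complexity estimate of Lemma~\ref{lem:numerical-shit}; no new ideas are needed beyond combining the two. First I would record that the hypothesis ``\eqref{eq:main dual} is feasible'' means $\im(A^\T)_{++}\neq\emptyset$, so by Lemma~\ref{lem:rho}(ii) the origin lies outside $\conv(A)$ and hence $\rho_A>0$. In particular $\rho_A\neq 0$, which is exactly the side condition under which Lemma~\ref{lem:numerical-shit} applies to the integer matrix $A$ of encoding size $L$, giving $\rho_A\geq \bart_A\geq 2^{-4L}$, where $\bart_A$ is as in \eqref{def:delta}.

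Next I would pass to logarithms: from $\rho_A\geq 2^{-4L}$ we get $\log\rho_A^{-1}\leq 4L=O(L)$, the base of the logarithm being irrelevant since the bounds are asymptotic. Substituting this into the arithmetic-operation count $O\left(m^2n^2\log\rho_A^{-1}\right)$ supplied by Theorem~\ref{thm:main-matrix-dual} yields $O\left(m^2n^2 L\right)$, which is the claimed bound; the iteration count $O\left(m^3\log\rho_A^{-1}\right)$ likewise becomes $O\left(m^3 L\right)$, though only the arithmetic count is stated in the corollary.

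The remaining points are bookkeeping rather than genuine obstacles. One should check that the standing assumption $\rk(A)=m$ demanded by Theorem~\ref{thm:main-matrix-dual} is already part of the corollary's hypothesis, so no auxiliary rank-reduction preprocessing is needed; and that the quantities $\Delta_A$ and $\bart_A$ are well defined and the inequality $|\rho_A|\geq\bart_A$ valid precisely for integer $A$, which is the content of Lemma~\ref{lem:numerical-shit}. The only conceivable subtlety is the convention used for ``encoding size $L$'': one must ensure $L$ is large enough that $\log n$, $\log m$, and the logarithms of the entries of $A$ are all $O(L)$, so that constant factors absorbed into the $O(\cdot)$ are harmless; under the standard binary encoding this is automatic. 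Hence the proof is the one-line deduction: $\log\rho_A^{-1}=O(L)$ by Lemmas~\ref{lem:rho} and~\ref{lem:numerical-shit}, plug into Theorem~\ref{thm:main-matrix-dual}.
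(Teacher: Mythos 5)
Your proposal is correct and matches the paper, which derives the corollary in exactly this way: the feasibility of \eqref{eq:main dual} gives $\rho_A>0$, Lemma~\ref{lem:numerical-shit} gives $\rho_A\ge 2^{-4L}$ so that $\log\rho_A^{-1}=O(L)$, and substituting into the bound of Theorem~\ref{thm:main-matrix-dual} yields $O(m^2n^2L)$. No further comment is needed.
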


In the above framework,  the only important property of
von Neumann's algorithm is that it delivers a vector
$y\in\conv(a_1/\|a_1\|_Q,\ldots,a_n/\|a_n\|_Q)$ with $\|y\|\leq
O(1/m)$ in time polynomial in $m$ and $n$.
This can be also achieved using other first order
methods, such as Perceptron, the DV-updates, or Wolfe's nearest-point
algorithm~\cite{Wolfe}. The best running times can be obtained using the Smoothed Perceptron algorithm of Pe\~na and Soheili \cite{Pena-Soheili-smooth} or the  Mirror Prox for Feasibility Problems (MPFP) by Yu et
 al. \cite{Yu-Karzan-Carbonell}.

\begin{theorem}\label{thm:main-matrix-smoothed} For any input matrix
$A\in\R^{m\times n}$ such that $\rk(A)=m$ and \eqref{eq:main dual} is feasible,  Algorithm~\ref{alg:dual-alg-full} with
the Smoothed Perceptron of \cite{Pena-Soheili-smooth} or the  Mirror Prox method of \cite{Yu-Karzan-Carbonell} finds a
feasible solution to \eqref{eq:main dual} by performing
$O\left(m^2\sqrt{\log n}\cdot\log \rho_A^{-1}\right)$ iterations. The
number of arithmetic operations is $O\left(m^3 n \sqrt{\log n}\cdot
  \log \rho_A^{-1}\right)$.
If $A\in\Z^{m\times n}$ is integer with encoding length $L$, then the running time is $O\left(m^3 n \sqrt{\log n}\cdot L\right)$.
\end{theorem}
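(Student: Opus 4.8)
The plan is to reduce Theorem~\ref{thm:main-matrix-smoothed} to Theorem~\ref{thm:main-matrix-dual} by observing that the analysis of Algorithm~\ref{alg:dual-alg-full} in Section~\ref{sec:dual-full-analysis} uses von Neumann's algorithm only as a black box with the following interface: given $A$, $Q\in\bb S^m_{++}$, and $\eps>0$, it returns $(x,y)$ with $y=\sum_i x_i a_i/\|a_i\|_Q$, $\vec e^\top x=1$, $x\ge 0$, and either $A^\top Q y>0$ or $\|y\|_Q\le\eps$. The volumetric progress argument underlying Theorem~\ref{thm:main-matrix-dual} depends only on $\eps=\const$ and the convex-combination structure of the returned $y$; it is insensitive to \emph{how} the first order method found it. Hence the only quantity that changes when we swap in the Smoothed Perceptron of \cite{Pena-Soheili-smooth} or the Mirror-Prox method of \cite{Yu-Karzan-Carbonell} is the per-phase iteration count.

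First I would record that, as stated in Section~\ref{sec:coord-desc-overview}, both of these first order methods produce, for the system $\min\|Ax\|_Q^2$ over $\vec e^\top x=1,\ x\ge0$, a convex-combination vector of the normalized columns with $Q$-norm at most $\eps$ in $O(\sqrt{\log n}/\eps)$ iterations rather than the $O(1/\eps^2)$ of plain von Neumann (Lemma~\ref{lemma:VN running time}); here too one runs the method in the inner product $\scalar{\cdot,\cdot}_Q$, equivalently in the standard inner product on the vectors $Q^{1/2}a_i/\|Q^{1/2}a_i\|_2$. With $\eps=\const=\frac1{11m}$ this is $O(m\sqrt{\log n})$ iterations per phase, against $O(m^2)$ for von Neumann. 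Since the number of rescaling phases is unchanged — it is $O(m\log\rho_A^{-1})$, exactly as in the proof of Theorem~\ref{thm:main-matrix-dual} — the total iteration count becomes $O(m^2\sqrt{\log n}\cdot\log\rho_A^{-1})$.

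Next I would account for the arithmetic cost. As in Lemma~\ref{lemma:VN running time}, after each rescaling we precompute $A^\top Q A$ (which costs $O(n^2 m)$, or $O(m n^2)$ if one maintains things appropriately, and in any case is dominated), after which each smoothed-perceptron or Mirror-Prox iteration costs $O(n)$ arithmetic operations on top of the maintained vectors, plus the cheap gradient/smoothing bookkeeping which is also $O(n)$ per step. Thus each phase costs $O(mn\sqrt{\log n})$ beyond the $O(mn^2)$ preprocessing; over $O(m\log\rho_A^{-1})$ phases this gives $O\!\left(m^3 n\sqrt{\log n}\cdot\log\rho_A^{-1}\right)$ arithmetic operations, the per-phase preprocessing term $O(mn^2)\cdot O(m\log\rho_A^{-1})=O(m^2n^2\log\rho_A^{-1})$ being absorbed once $m\sqrt{\log n}\ge n$ is \emph{not} assumed — one keeps the dominant of the two, and the stated bound is the one claimed in the theorem. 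Finally, substituting $\log\rho_A^{-1}=O(L)$ from Lemma~\ref{lem:numerical-shit} gives the $O(m^3 n\sqrt{\log n}\cdot L)$ bit-complexity bound.

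The main obstacle — and the only genuine content beyond bookkeeping — is verifying that the rescaling analysis of Theorem~\ref{thm:main-matrix-dual} is truly oblivious to the choice of first order routine. Concretely, one must check that the matrix update $R:=\frac1{1+\eps}\big(R+\sum_i \frac{x_i}{\|a_i\|_Q^2}a_ia_i^\top\big)$ and its volumetric potential guarantee (the analogue for $Q$ of Lemma~\ref{lem:volume-increase}) use only $\vec e^\top x=1$, $x\ge0$, and $\|\sum_i x_i a_i/\|a_i\|_Q\|_Q\le\eps$ — not any convergence property of von Neumann specifically — together with the bound $\eps\le\frac1{11m}$; granting that (which the proof in Section~\ref{sec:dual-full-analysis} does), the theorem follows immediately. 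I would therefore phrase the proof as: ``The proof is identical to that of Theorem~\ref{thm:main-matrix-dual}, except that each call to the von Neumann subroutine is replaced by a call to the Smoothed Perceptron of \cite{Pena-Soheili-smooth} (resp. Mirror-Prox of \cite{Yu-Karzan-Carbonell}), which by \cite{Pena-Soheili-smooth,Yu-Karzan-Carbonell} returns a vector of the required form with $\|y\|_Q\le\eps$ in $O(\sqrt{\log n}/\eps)=O(m\sqrt{\log n})$ iterations. The number of phases is unchanged, giving the iteration and arithmetic bounds; the bit-complexity bound follows from Lemma~\ref{lem:numerical-shit}.''
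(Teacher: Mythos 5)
Your reduction to the analysis of Theorem~\ref{thm:main-matrix-dual} is exactly the paper's route, and the iteration count is handled correctly: the rescaling/volumetric argument only uses that the returned $y$ is a convex combination of the $a_i/\|a_i\|_Q$ with $\|y\|_Q\le\eps=\const$, both smoothed methods deliver this in $O(\sqrt{\log n}/\eps)=O(m\sqrt{\log n})$ iterations, and the number of rescalings stays $O(m\log\rho_A^{-1})$, giving $O(m^2\sqrt{\log n}\log\rho_A^{-1})$ iterations. The gap is in your accounting of arithmetic operations. You transplant the per-iteration cost of $O(n)$ from Lemma~\ref{lemma:VN running time}, but that amortization relies on von Neumann's update being a single-coordinate move (so $z=A^\T QAx$ can be refreshed using one column of the precomputed $A^\T QA$); the Smoothed Perceptron and Mirror-Prox iterates update $x$ densely (softmax-type weights over all $n$ coordinates), so the $O(n)$-per-iteration trick does not carry over — the natural per-iteration cost is $O(mn)$ (recompute $y=\sum_i x_i a_i/\|a_i\|_Q$ and $A^\T Qy$ directly), which is what the paper charges. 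More importantly, even granting your $O(n)$ claim, your scheme precomputes $A^\T QA$ at every rescaling at cost $O(mn^2)$, so the total would be $O(m^2n^2\log\rho_A^{-1})$ — this does \emph{not} imply the claimed $O(m^3 n\sqrt{\log n}\log\rho_A^{-1})$ when $n\gg m\sqrt{\log n}$, and the remark about ``keeping the dominant of the two'' does not repair this. (Your own per-phase figure $O(mn\sqrt{\log n})$ multiplied by $O(m\log\rho_A^{-1})$ phases also gives $m^2$, not the $m^3$ you wrote, which signals the bookkeeping went astray.)

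The fix, which is how the paper argues, is to \emph{not} precompute $A^\T QA$ for the smoothed methods: each iteration costs $O(mn)$ working directly with $A$ and $Q$, and between rescalings one only recomputes $R$ and $Q$ in $O(m^2n)$ time. A phase then costs $O(m\sqrt{\log n})\cdot O(mn)+O(m^2n)=O(m^2n\sqrt{\log n})$, and over $O(m\log\rho_A^{-1})$ rescalings this yields $O(m^3n\sqrt{\log n}\log\rho_A^{-1})$; avoiding the $O(mn^2)$-per-phase preprocessing is precisely why the smoothed bound can beat the $O(m^2n^2\log\rho_A^{-1})$ of Theorem~\ref{thm:main-matrix-dual} for $n\gg m$. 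The final bit-complexity statement via $\log\rho_A^{-1}=O(L)$ (Lemma~\ref{lem:numerical-shit}) is fine as you wrote it.
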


\paragraph{Comparison to previous work}
The main difference between Algorithm~\ref{alg:dual-alg-full} and the
algorithms by Betke's \cite{Betke} or by Pe\~na and
Soheili's \cite{Pena-Soheili} is the use of a multi-rank rescaling, as opposed to rank-1 updates.
The multi-rank rescaling allows for a factor $n$ improvement in the overall number of iterations. While we use a
similar volumetric potential, the multi-rank update guarantees a constant factor decrease in potential (Lemma~\ref{lem:dual-vol-dec})
whenever  in the algorithm $\|y\|_Q\in O(1/m)$, whereas the rank-1 update
provides the same guarantee only when $\|y\|_Q\in O(1/(m\sqrt{n}))$.

\subsection{Analysis}\label{sec:dual-full-analysis}
It is easy to see that the matrix $R$ remains positive semidefinite throughout the algorithm,
and admits the following decomposition.

\begin{lemma}\label{lem:gamma-form}
At any stage of the algorithm, we can write the matrix $R$ in the form
\[
R=\alpha I_m+\sum_{i=1}^n \gamma_i \hat{a}_i\hat{a}_i^\T
\]
where $\alpha=1/(1+\varepsilon)^t$ for the total number of rescalings
$t$ performed thus far, and $\gamma_i\ge 0$. The
trace is $\tr(R)=\alpha m+\sum_{i=1}^n\gamma_i$.
\end{lemma}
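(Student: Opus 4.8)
The plan is to prove the statement by a straightforward induction on the number $t$ of rescalings performed so far. For the base case $t=0$ we have $R=I_m$, which is of the claimed form with $\alpha=1=1/(1+\varepsilon)^0$ and $\gamma_i=0$ for all $i\in[n]$.

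For the inductive step, suppose that after $t$ rescalings we have $R=\alpha I_m+\sum_{i=1}^n\gamma_i\hat a_i\hat a_i^\T$ with $\alpha=1/(1+\varepsilon)^t$ and $\gamma_i\ge 0$, and let $(x,y)$ be the output of the subsequent call to the von Neumann algorithm; in particular $x\ge 0$. The next rescaling replaces $R$ by
\[
R'=\frac{1}{1+\varepsilon}\left(R+\sum_{i=1}^n\frac{x_i}{\|a_i\|_Q^2}a_ia_i^\T\right).
\]
Writing $a_ia_i^\T=\|a_i\|^2\,\hat a_i\hat a_i^\T$ and substituting the inductive form of $R$ gives
\[
R'=\frac{\alpha}{1+\varepsilon}I_m+\sum_{i=1}^n\gamma_i'\,\hat a_i\hat a_i^\T,\qquad \gamma_i':=\frac{1}{1+\varepsilon}\left(\gamma_i+\frac{x_i\|a_i\|^2}{\|a_i\|_Q^2}\right).
\]
Since $Q$ is positive definite we have $\|a_i\|_Q>0$, and together with $x_i\ge 0$ and the inductive hypothesis $\gamma_i\ge 0$ this yields $\gamma_i'\ge 0$; moreover $\alpha/(1+\varepsilon)=1/(1+\varepsilon)^{t+1}$, which is exactly the claimed value of $\alpha$ after $t+1$ rescalings. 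This completes the induction.

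Finally, the trace identity follows by linearity of the trace together with $\tr(\hat a_i\hat a_i^\T)=\|\hat a_i\|^2=1$, giving $\tr(R)=\alpha\tr(I_m)+\sum_{i=1}^n\gamma_i\tr(\hat a_i\hat a_i^\T)=\alpha m+\sum_{i=1}^n\gamma_i$; the same decomposition also shows $R$ is positive semidefinite (indeed positive definite, as $\alpha>0$ and each $\hat a_i\hat a_i^\T$ is positive semidefinite), so that $Q=R^{-1}$ in the algorithm is well defined. There is no genuine obstacle in this lemma — it is a bookkeeping statement — and the only point requiring a moment's care is verifying that the coefficient $x_i\|a_i\|^2/\|a_i\|_Q^2$ contributed by a rescaling is nonnegative, which is precisely where the sign guarantee $x\ge 0$ from the von Neumann subroutine is used.
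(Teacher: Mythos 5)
Your induction is correct and is exactly the intended argument: the paper leaves this lemma without an explicit proof (prefacing it with ``it is easy to see''), but the same induction, with the identity $a_ia_i^\T=\|a_i\|^2\,\hat a_i\hat a_i^\T$ and the sign guarantee $x\ge 0$ from the von Neumann subroutine, is spelled out in the proof of the strengthened Lemma~\ref{lem:gamma-form-2}, which your argument matches step for step.
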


Recall that we denote by $\Sigma_A=\{y\in \R^m: A^\T y\ge 0\}$ the image cone. Let us define the set
\begin{equation}
\label{def:FA}
F_A=\Sigma_A\cap \B^m.
\end{equation}

The ellipsoid
$E(R)=\{z\in \R^m: \|z\|^2_{R}\le 1\}$
plays a key role in the analysis, due to the following properties.

\begin{lemma}\label{lem:F-in}
Throughout Algorithm~\ref{alg:dual-alg-full}, $F_A\subseteq E(R)$ holds.
\end{lemma}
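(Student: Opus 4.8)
The plan is to prove the containment $F_A \subseteq E(R)$ by induction on the number $t$ of rescalings performed so far. For the base case $t=0$ we have $R = I_m$, so $E(R) = \B^m$, and since $F_A = \Sigma_A \cap \B^m \subseteq \B^m$ the claim is immediate. For the inductive step, suppose $F_A \subseteq E(R)$ before a rescaling, and let $R' = \frac{1}{1+\varepsilon}\bigl(R + \sum_{i=1}^n \frac{x_i}{\|a_i\|_Q^2} a_i a_i^\T\bigr)$ be the updated matrix; we must show $F_A \subseteq E(R')$, i.e.\ that $\|z\|_{R'}^2 \le 1$ for every $z \in F_A$.

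The key computation is to bound $\|z\|_{R'}^2 = \frac{1}{1+\varepsilon}\bigl(\|z\|_R^2 + \sum_{i=1}^n \frac{x_i}{\|a_i\|_Q^2}\langle a_i, z\rangle^2\bigr)$ for $z \in F_A$. The first term is at most $1$ by the inductive hypothesis. For the second term, the idea is that $z \in \Sigma_A$ means $a_i^\T z \ge 0$ for all $i$, while $\|z\| \le 1$ means each $|a_i^\T z| = |\langle a_i, z\rangle|$ is controlled. More precisely, I expect to use that $\langle a_i, z\rangle_Q / \|a_i\|_Q$ appears in the von Neumann output: the vector $y = \sum_i x_i\, a_i/\|a_i\|_Q$ satisfies $\|y\|_Q \le \varepsilon$, and since $z \in \Sigma_A$ we should relate $\sum_i \frac{x_i}{\|a_i\|_Q}\langle a_i, z\rangle$ to $\langle y, \cdot\rangle$-type quantities. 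The cleanest route is probably: since $\langle a_i, z\rangle \ge 0$ for $z\in\Sigma_A$, and writing things in the $Q$-inner product, bound $\sum_i \frac{x_i}{\|a_i\|_Q^2}\langle a_i,z\rangle^2$ by $\bigl(\max_i \frac{\langle a_i, z\rangle}{\|a_i\|_Q}\bigr)\cdot \sum_i \frac{x_i}{\|a_i\|_Q}\langle a_i, z\rangle$ and then argue each factor is small — the second factor being essentially $\langle Q^{-1/2} y, Q^{1/2} z\rangle$-like and bounded via Cauchy–Schwarz by $\|y\|_Q \|z\|_{Q^{-1}}$, and $\|z\|_{Q^{-1}} = \|z\|_R \le 1$ on $F_A$. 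One then needs the resulting bound on the whole second term to be at most $\varepsilon$, so that $\|z\|_{R'}^2 \le \frac{1+\varepsilon}{1+\varepsilon} = 1$.

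The main obstacle I anticipate is the bookkeeping with the two inner products $\langle\cdot,\cdot\rangle_Q$ and $\langle\cdot,\cdot\rangle_R$ (with $Q = R^{-1}$) and making sure the Cauchy–Schwarz estimate is applied in the right space so that the factors $\|a_i\|_Q$ cancel correctly and the $z \in \Sigma_A$ nonnegativity is genuinely used to kill the sign issues. A secondary point is that one must use the \emph{current} $R$ (before the rescaling) in the induction hypothesis while the von Neumann call that produced $(x,y)$ also used the current $Q = R^{-1}$; these are consistent, but I would state carefully which $Q$ is in force. Once the estimate $\sum_i \frac{x_i}{\|a_i\|_Q^2}\langle a_i, z\rangle^2 \le \varepsilon$ is in hand for all $z \in F_A$, the inductive step closes immediately and the lemma follows.
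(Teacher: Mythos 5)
Your plan is correct and follows essentially the same route as the paper's proof: induction on rescalings, expanding $\|z\|_{R'}^2$, using $0\le a_i^\T z\le\|a_i\|_Q\|z\|_R\le\|a_i\|_Q$ to bound each squared term by the corresponding linear term, and then Cauchy--Schwarz in the form $y^\T z\le\|y\|_Q\|z\|_R\le\varepsilon$ to conclude $\|z\|_{R'}^2\le(1+y^\T z)/(1+\varepsilon)\le1$. The only cosmetic difference is that the paper bounds $(a_i^\T z/\|a_i\|_Q)^2\le a_i^\T z/\|a_i\|_Q$ term by term rather than factoring out the maximum, but the estimates are identical.
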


\begin{proof}
The proof is by induction on the number of rescalings. At
initialization, $F_A\subseteq E(I_m)=\B^m$ is trivial. Assume
$F_A\subseteq E(R)$, and we rescale $R$ to $R'$. We show
$F_A\subseteq E(R')$. Consider an arbitrary point $z\in F_A$;
then $ a_i^\T z\ge 0$ for all $i\in[n]$ and, by the induction hypothesis, $\|z\|^2_{R}\le 1$ because $z\in E(R)$.

For the vector $x$ returned by the von Neumann algorithm, the
algorithm sets
\[
R'=\frac{1}{1+\varepsilon}\left(R+\sum_{i=1}^n \frac{x_i }{\|a_i\|_Q^2}a_i a_i^\T\right).
\]
Recall that, in the algorithm, the vector $y=\sum_{i=1}^n x_i \frac{a_i}{\|a_i\|_Q}$ satisfies $\|y\|_Q\leq\varepsilon$.
By the Cauchy-Schwartz inequality, we have $y^\T z=y^\T Q^{1/2}{Q^{-1/2}}z\leq \|y\|_Q\|z\|_R\leq \varepsilon$, and similarly $ a_i^\T z\le \|a_i\|_Q\|z\|_R\leq \|a_i\|_Q$ for every $i\in[m]$. We then have
\begin{eqnarray*}
\|z\|^2_{R'}&=&\frac{1}{1+\varepsilon} z^\T \left(R+\sum_{i=1}^n\frac{x_i}{\|a_i\|_Q^2} a_ia_i^\T\right) z
=\frac{1}{1+\varepsilon}\left(\|z\|^2_{R}+\sum_{i=1}^n x_i \left(\frac{a_i^\T z}{\|a_i\|_Q} \right)^2\right)\\
&\le&\frac{1}{1+\varepsilon}\left(1+\sum_{i=1}^n x_i \frac{a_i^\T z}{\|a_i\|_Q} \right) = \frac{1+ y^\T z}{1+\varepsilon}\leq 1,
\end{eqnarray*}
where the first inequality follows from the facts that $\|z\|_R\leq 1$, $x\geq 0$, and $0\le a_i^\T z\le \|a_i\|_Q$ for all $i\in[n]$, while the second follows from  $y^\T z\leq \varepsilon$. Consequently, $z\in E(R')$, completing the proof.
\qued \end{proof}

\begin{lemma}\label{lem:dual-vol-dec}
$\det(R)$ increases by a factor at least $16/9$ at
every rescaling.
\end{lemma}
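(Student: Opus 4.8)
The plan is to track exactly how $\det(R)$ transforms under the rescaling step. Write the new matrix as $R' = \frac{1}{1+\varepsilon}\bigl(R + M\bigr)$, where I abbreviate $M \eqdef \sum_{i=1}^n \frac{x_i}{\|a_i\|_Q^2}\, a_i a_i^\T$ and $(x,y)$ is the pair returned by the von Neumann subroutine, so in particular $x \ge 0$ and $\vec e^\T x = 1$. Note that $M \in \bb{S}^m_+$, and by Lemma~\ref{lem:gamma-form} we have $R \succeq \alpha I_m \succ 0$, so $R$ is positive definite throughout; hence $R^{-1/2}$ is well defined, $Q = R^{-1}$, and all the manipulations below make sense.

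First I would factor out $R$ from the determinant:
\[
\det(R') = \frac{1}{(1+\varepsilon)^m}\det(R+M) = \frac{\det(R)}{(1+\varepsilon)^m}\,\det\!\bigl(I_m + R^{-1/2}MR^{-1/2}\bigr).
\]
Since $R^{-1/2}MR^{-1/2} \in \bb{S}^m_+$, Lemma~\ref{lem:linalg}(\ref{det-trace-1}) gives $\det(I_m + R^{-1/2}MR^{-1/2}) \ge 1 + \tr(R^{-1/2}MR^{-1/2}) = 1 + \tr(R^{-1}M) = 1 + \tr(QM)$. The trace is then computed explicitly:
\[
\tr(QM) = \sum_{i=1}^n \frac{x_i}{\|a_i\|_Q^2}\,\tr\bigl(Q a_i a_i^\T\bigr) = \sum_{i=1}^n \frac{x_i}{\|a_i\|_Q^2}\, a_i^\T Q a_i = \sum_{i=1}^n x_i = 1,
\]
using $a_i^\T Q a_i = \|a_i\|_Q^2$ and $\vec e^\T x = 1$. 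Combining the two displays yields $\det(R') \ge \dfrac{2\,\det(R)}{(1+\varepsilon)^m}$.

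To conclude with the stated factor, I would bound the normalization using $\varepsilon = \const$: one has $(1+\varepsilon)^m \le e^{m\varepsilon} = e^{1/11} \le \tfrac{9}{8}$ (since $\tfrac{1}{11} < \ln\tfrac{9}{8}$), hence $\det(R') \ge \tfrac{2}{9/8}\det(R) = \tfrac{16}{9}\det(R)$. There is no genuine obstacle here — the only care needed is to keep the inequalities pointing the right way and to invoke positive definiteness of $R$ so that the factorization and $Q = R^{-1}$ are legitimate; the rest is the trace identity $\tr(QM)=\vec e^\T x = 1$ together with Lemma~\ref{lem:linalg}(\ref{det-trace-1}).
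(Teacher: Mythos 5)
Your proof is correct and follows essentially the same route as the paper's: factor out $R$, apply Lemma~\ref{lem:linalg}(\ref{det-trace-1}) to $I_m+R^{-1/2}MR^{-1/2}$, compute $\tr(QM)=\vec e^\T x=1$, and finish with $(1+\varepsilon)^m\le e^{1/11}\le 9/8$. The only differences are cosmetic (you use cyclicity of the trace where the paper expands term by term, and you spell out the final numerical bound that the paper leaves implicit).
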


\begin{proof}
Let $R$ and $R'$ denote the matrix before and after the rescaling. Let $X=\sum_{i=1}^n
  x_ia_ia_i^\T/\|a_i\|_Q^2$; hence $R'={(R+X)}/(1+\varepsilon)$. The ratio of the
  two determinants is
\[
\frac{\det(R')}{\det(R)}=\frac{\det(R+X)}{(1+\varepsilon)^m\det(R)}=\frac{\det\left(I_m+R^{-1/2}XR^{-1/2}\right)}{(1+\varepsilon)^m}
\]
Now $R^{-1/2}=Q^{1/2}$, and $Q^{1/2}XQ^{1/2}$ is a positive
semidefinite matrix. The determinant can be lower bounded using
using Lemma~\ref{lem:linalg}(\ref{det-trace-1}) and the linearity of
the trace.
\[
\frac{\det(R')}{\det(R)}\ge \frac{1+\tr(Q^{1/2}XQ^{1/2})}{(1+\varepsilon)^m}=
\left(1+\sum_{i=1}^n \frac{x_i}{\|a_i\|_Q^2} \tr(Q^{1/2}a_ia_i^\T Q^{1/2})\right)/(1+\varepsilon)^m.
\]
Finally,
$\tr(Q^{1/2}{a_ia_i^\T }Q^{1/2})=\tr(a_i^\T Qa_i)=\|a_i\|_Q^2$. Therefore
we conclude
\[
\frac{\det(R')}{\det(R)}\ge \frac{1+\sum_{i=1}^n x_i}{(1+\varepsilon)^m}=\frac{2}{(1+\varepsilon)^m}.
\]
The claims follows using that $\varepsilon=\const$.
\qued \end{proof}

 We now present the proofs of
Theorems~\ref{thm:main-matrix-dual} and \ref{thm:main-matrix-smoothed} based on these lemmas.

\begin{proof}[Proof of Theorem~\ref{thm:main-matrix-dual}]

By Lemma~\ref{lem:rho}, $\Sigma_A$ contains a ball $B$ of radius $\rho_A$ centered on the surface of the unit sphere.
Consequently, $B\subseteq \Sigma_A\cap (1+\rho_A) \B^m=(1+\rho_A)F_A$.
In particular, $F_A$ contains a ball of radius $\rho_A/(1+\rho_A)$, therefore
$\vol(F_A)\geq (\rho_A/(1+\rho_A))^m \vol(\B^m)\geq (\rho_A/2)^m \vol(\B^m)$
On the other hand, since $\vol(E(R))=\det(R)^{-1/2}\vol(\B^m)$,
Lemma~\ref{lem:dual-vol-dec} implies that $\vol(E(R))$ decreases at
least  by a factor  $2/3$ at every rescaling.
Lemma~\ref{lem:F-in} ensures that $\vol(E(R))\geq \vol(F_A)$.
Consequently, the total number of rescalings during the entire course of the algorithm  provides the bound $O(m\log\rho_A^{-1})$.

By Lemma~\ref{lemma:VN running time}, the von Neumann algorithm performs
$O(m^2)$ iterations between two consecutive rescalings, where each von Neumann
iteration can be implemented in time $O(n)$ assuming that we compute
the matrix $A^\T Q A$ at the beginning and after every
rescaling. Thus the total number of arithmetic operations required by the von Neumann iterations between two rescalings is $O(m^2n)$. To compute $A^\T Q A$, provided  we have computed $Q$, requires time $O(n^2m)$. Updating the matrix $R$ requires time $O(m^2n)$, since we need time $O(m^2)$ to compute each of the $n$ terms $x_i a_i a_i^\T/\|a_i\|_Q^2$, $i\in[n]$. The inverse $Q$ of $R$ can be computed in time $O(m^3)$. Hence the overall number of arithmetic operations needed between two rescalings is $O(n^2m)$. This gives an overall complexity of $O(n^2m^2\log \rho_A^{-1})$ arithmetic operations.
\qued \end{proof}
We note that the higher running time compared to Algorithm~\ref{alg:primal-algorithm-matrix} is
due to the time required to update $A^\top QA$. This has to be
recomputed from scratch; whereas the corresponding update to $A^\top
A$ in the kernel case was done in $O(n^2)$, since a rank-1 rescaling
was used.

\begin{proof}[Proof of Theorem~\ref{thm:main-matrix-smoothed}]
Both the Smoothed Perceptron of \cite{Pena-Soheili-smooth} or the  Mirror Prox method of \cite{Yu-Karzan-Carbonell}
terminate in $O(\sqrt{\log n}/\varepsilon)$ iterations with output $x\in\R^n_+$ and $y\in \R^m$ such that $\|x\|_1=1$, $y=\sum_{i=1}^n x_i a_i/\|a_i\|_Q$, and either
$A^\T Q y>0$, or  $\|y\|_Q\le\varepsilon$. For both methods, each iteration requires $O(mn)$ arithmetic operations.
As before, $R$ and $Q$ can be recomputed in time $O(m^2 n)$. Thus the overall number of operations required between rescalings is $O(m^2n\sqrt{\log n})$.
As before the total number of rescalings is $O(m\log\rho_A^{-1})$. These together give
the claimed bound.
\qued \end{proof}

\subsection{Oracle model for strict conic feasibility}

Observe that Algorithm~\ref{alg:dual-alg-full} does not require explicit knowledge of the matrix $A$.
In particular, Algorithm~\ref{alg:dual-alg-full} can be easily adapted to an oracle
model. Here the purpose is to find a point in the interior of a full dimensional cone defined as $\Sigma=\{y\in\R^m\st a_i^\T y\geq 0 \; \forall a_i\in I\}$,
where $I$ is a set (possibly infinite) indexing vectors $a_i\in\R^m$, $i\in I$.
We assume that we have access to  a {\em strict separation oracle} SO,
where for each $v\in \R^m$ the call SO$(v)$ returns `YES' if $v\in \mathrm{int}(\Sigma)$ (that is, if $a_i^\T v> 0$ for all $i\in I$),
or it returns $a_k$ for some $k\in I$ such that $a_k^\T v\leq 0$.

Below we present Algorithm~\ref{alg:conic} to determine a point in the interior of $\Sigma$.
The algorithm is nearly identical to Algorithm~\ref{alg:dual-alg-full}, and it uses an oracle version of von Neumann (Algorithm~\ref{alg:orac-Neumann}).
The running time is expressed in terms of the Goffin measure $\rho_\Sigma$ of a full-dimensional cone $\Sigma$,
which is the radius of the largest ball contained in $\Sigma$ centered
on the surface of the unit sphere,
\[\rho_\Sigma\eqdef\sup\{r\st \B^m(p,r)\subseteq \Sigma\;\exists\, p\in\R^m\mbox{ s.t. } \|p\|=1\}.\]

\begin{figure}[htb!]
\begin{center}
\begin{minipage}{0.85\textwidth}
\begin{algorithm}[H]
\raggedright
  \begin{algorithmic}[1]
    \Require{A positive definite
      matrix $Q\in \R^{m\times m}$ and an $\varepsilon>0$.}
    \Ensure{Vectors $\{a_i: i\in N\}$ for $N\subseteq I$, $x\in \R_+^N$, $y$, such that
      $y=\sum_{i\in N}{x_i a_i/\|a_i\|_Q}$, $\sum_{i\in N} x_i=1$, and
      either  $Qy\in \mathrm{int}(\Sigma)$ or  $\|y\|_Q\le \varepsilon$.}
    \State Call SO$(0)$ to obtain  $a_k$. Set $N:=\{k\}$, $x_k:=1$, $y:=a_k/\|a_k\|_Q$.
    \While{$\|y\|_Q>\varepsilon$}
        \If{SO$(Qy)$ returns `YES'} \Return{$\{a_i: i\in N\}$, $x$, $y$  }
        \State Terminate.
    \Else{ let $a_k$, $k\in I$, be the output of SO$(Qy)$;}
      \State Let $\lambda:=\displaystyle\frac{\scalar{y-a_k/\|a_k\|_Q,y}_Q}{\|y-a_k/\|a_k\|_Q\|^2_Q}$;
       \State {\bf update} $x_i:=(1-\lambda)x_i$ for all $i\in N\sm\{k\}$, $x_k:=(1-\lambda)x_k+\lambda$;
       \footnote{For notational convenience, we consider $x_k$ to be $0$ if $k\notin N$.}
       \State \quad $\displaystyle y:=(1-\lambda)y+\lambda\frac{a_k}{\|a_k\|_Q}$, $N:=N\cup\{k\}$;
   \EndIf
	\EndWhile
\State{\Return{$\{a_i: i\in N\}$, $x$, $y$.  }}
 \end{algorithmic}
\caption{Oracle von Neumann algorithm}\label{alg:orac-Neumann}
\end{algorithm}
\end{minipage}
\end{center}
\end{figure}

\renewcommand{\algorithmicrequire}{\textbf{Input:}}
\renewcommand{\algorithmicensure}{\textbf{Output:}}
\begin{figure}[h!]
\begin{center}
\begin{minipage}{0.85\textwidth}
\begin{algorithm}[H]
\raggedright
  \begin{algorithmic}[1]
    \Ensure{A point in the interior of a full dimensional cone
      $\Sigma$ given via a separation oracle SO.}
   \State Set $Q:=I_m$, $R:=I_m$. Call \Call{Oracle von
     Neumann}{$Q,\varepsilon$} to obtain $\{a_i: i\in N\}$, $x\in\R^N$, $y\in\R^m$.
 \While{$Qy\not\in\mathrm{int}(\Sigma)$}
    \State {\bf rescale}
   \[R:=\frac{1}{1+\varepsilon}\left(R+\sum_{i\in N} \frac{x_i}{\|a_i\|_Q^2} a_ia_i^\top \right);\quad  Q:=R^{-1}.\]
    \State Call \Call{Oracle von Neumann}{$Q,\varepsilon$} to obtain
    $\{a_i: i\in N\}$, $x$, $y$.
 \EndWhile
\State \Return{$\bar y:=Qy$.}
 \end{algorithmic}
\caption{Strict Conic Feasibility Algorithm}\label{alg:conic}
\end{algorithm}

\end{minipage}
\end{center}
\end{figure}

\begin{theorem}\label{thm:conic}
For any full dimensional cone $\Sigma$ expressed by a separation oracle,
Algorithm~\ref{alg:conic} finds a point in $\mathrm{int}{(\Sigma)}$ by performing
$O\left(m^3\log{\rho_\Sigma^{-1}}\right)$ von Neumann iterations.
The total number of oracle calls is $O\left(m^3\log{\rho_\Sigma^{-1}}\right)$,
while the total number of arithmetic operations is $O\left(m^5\log{\rho_\Sigma^{-1}}\right)$.
\end{theorem}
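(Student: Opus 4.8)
The plan is to re-run the analysis of Theorem~\ref{thm:main-matrix-dual} almost verbatim, after checking that neither Lemma~\ref{lem:F-in} nor Lemma~\ref{lem:dual-vol-dec} ever used the explicit availability or finiteness of the column set of $A$; the only real work is then the arithmetic bookkeeping. Write $F_\Sigma\eqdef\Sigma\cap\B^m$, the analogue of $F_A$. First I would record the oracle analogue of Lemma~\ref{lem:gamma-form}: after $t$ rescalings $R=\frac{1}{(1+\varepsilon)^t}I_m+\sum_i\gamma_i\hat a_i\hat a_i^\T$ with $\gamma_i\ge 0$, the sum ranging over the finitely many vectors returned so far by $\mathrm{SO}$; in particular $R\in\bb S^m_{++}$ and $Q=R^{-1}$ is always well defined. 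The Oracle von Neumann routine (Algorithm~\ref{alg:orac-Neumann}) produces exactly the same certificate as Algorithm~\ref{alg:Neumann}: vectors $x\in\R_+^N$, $y\in\R^m$ with $\sum_{i\in N}x_i=1$, $y=\sum_{i\in N}x_i a_i/\|a_i\|_Q$, and either $Qy\in\mathrm{int}(\Sigma)$ or $\|y\|_Q\le\varepsilon$. Dantzig's bound $\lceil 1/\varepsilon^2\rceil$ on the number of updates used in Lemma~\ref{lemma:VN running time} depends only on this convex-combination structure and on the update being the minimum-$Q$-norm point of a segment, so it carries over unchanged; since $\varepsilon=\const$, each call makes $O(m^2)$ von Neumann iterations.

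Next I would establish the two geometric invariants. For $F_\Sigma\subseteq E(R)$ the induction of Lemma~\ref{lem:F-in} goes through word for word: any $z\in F_\Sigma$ satisfies $a_i^\T z\ge 0$ for every oracle vector $a_i$ and $\|z\|_R\le 1$ by the hypothesis, the Cauchy--Schwarz estimates $y^\T z\le\varepsilon$ and $0\le a_i^\T z\le\|a_i\|_Q$ hold as before, and the same computation gives $\|z\|_{R'}^2\le(1+y^\T z)/(1+\varepsilon)\le 1$. For the determinant growth, the proof of Lemma~\ref{lem:dual-vol-dec} uses only $\tr(Q^{1/2}a_ia_i^\T Q^{1/2})=\|a_i\|_Q^2$ and $\sum_{i\in N}x_i=1$, so $\det(R)$ grows by at least $2/(1+\varepsilon)^m\ge 16/9$ at each rescaling, i.e.\ $\vol(E(R))$ shrinks by at least $2/3$. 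Since $\Sigma$ is full dimensional, $\rho_\Sigma>0$: there is a unit vector $p$ with $\B^m(p,\rho_\Sigma)\subseteq\Sigma$, hence (as $\Sigma$ is a cone) $\B^m(p,\rho_\Sigma)\subseteq\Sigma\cap(1+\rho_\Sigma)\B^m=(1+\rho_\Sigma)F_\Sigma$, so $F_\Sigma$ contains a ball of radius $\rho_\Sigma/(1+\rho_\Sigma)\ge\rho_\Sigma/2$ and $\vol(F_\Sigma)\ge(\rho_\Sigma/2)^m\nu_m$. Here the definition of $\rho_\Sigma$ plays exactly the role that Lemma~\ref{lem:rho}(ii) plays in the matrix case. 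Combining $\vol(E(I_m))=\nu_m$, the per-rescaling shrinkage, and $\vol(E(R))\ge\vol(F_\Sigma)>0$ shows there are at most $O(m\log\rho_\Sigma^{-1})$ rescalings; since a rescaling occurs only when $\mathrm{SO}$ did not answer `YES', the while-loop must terminate, and it terminates with $\bar y=Qy\in\mathrm{int}(\Sigma)$.

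It remains to collect the complexity. Each rescaling is preceded by one Oracle von Neumann call of $O(m^2)$ iterations, so there are $O(m^3\log\rho_\Sigma^{-1})$ von Neumann iterations in total, and since each iteration issues exactly one oracle call (plus the call $\mathrm{SO}(0)$ that initializes each run), the number of oracle calls is also $O(m^3\log\rho_\Sigma^{-1})$. For arithmetic, the difference from the matrix setting is that we cannot precompute $A^\T QA$: the index set $I$ may be infinite, so each von Neumann iteration recomputes $Qy$ (to query the oracle) and, for the returned $a_k$, the quantities $Qa_k$, $\|a_k\|_Q$ and $\lambda$ in $O(m^2)$ time, while updating $x$ over the active set $N$ of size $O(m^2)$ is likewise $O(m^2)$; thus one von Neumann call costs $O(m^4)$ arithmetic operations. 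A rescaling forms $R'=\frac1{1+\varepsilon}\bigl(R+\sum_{i\in N}\tfrac{x_i}{\|a_i\|_Q^2}a_ia_i^\T\bigr)$ as $|N|=O(m^2)$ rank-one updates at cost $O(m^2)$ each, i.e.\ $O(m^4)$, and inverts $R$ in $O(m^3)$. Over $O(m\log\rho_\Sigma^{-1})$ rescalings this gives $O(m^5\log\rho_\Sigma^{-1})$ arithmetic operations in total.

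The only point that needs care is this last bookkeeping: one must observe that the absence of an explicit finite matrix turns the $O(n)$-per-iteration step of Lemma~\ref{lemma:VN running time} into an $O(m^2)$-per-iteration step (recomputing $Qy$), and that the number of distinct oracle vectors accumulated within a single von Neumann call — hence the support size $|N|$ and the rank of the rescaling update — is bounded by the iteration count $O(m^2)$ rather than by $n$. The genuinely geometric parts (the ellipsoid containment $F_\Sigma\subseteq E(R)$ and the determinant growth) are unchanged, since those arguments never mentioned $n$.
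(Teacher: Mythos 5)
Your proposal is correct and follows essentially the same route as the paper, which simply declares the analysis ``nearly identical'' to that of Theorem~\ref{thm:main-matrix-dual} and then does the same arithmetic bookkeeping: $O(m^2)$ iterations per von Neumann call, $O(m\log\rho_\Sigma^{-1})$ rescalings, $|N|=O(m^2)$ bounding both the per-iteration update cost and the rank of each rescaling, yielding $O(m^5\log\rho_\Sigma^{-1})$ operations. Your explicit verification that Lemmas~\ref{lem:F-in} and~\ref{lem:dual-vol-dec} never use finiteness of the column set, and the lower bound $\vol(F_\Sigma)\ge(\rho_\Sigma/2)^m\nu_m$, are exactly the details the paper leaves implicit.
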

\begin{proof}
The analysis is nearly identical to that of Theorem~\ref{thm:main-matrix-dual}.
As before, Algorithm~\ref{alg:orac-Neumann} terminates in at most $\ceil{1/\varepsilon}\in O(m^2)$ iterations and the number of rescalings
in Algorithm~\ref{alg:conic} is $O\left(m\log{\rho_\Sigma^{-1}}\right)$. Thus we perform $O\left(m^3\log{\rho_\Sigma^{-1}}\right)$ von Neumann iterations,
each requiring one oracle call.
For the number of arithmetic operations,
we observe first that the set $N$ computed by Algorithm~\ref{alg:orac-Neumann} has size at most $\ceil{1/\varepsilon}\in O(m^2)$,
since $|N|$ increases by at most one at every iteration.
In every von Neumann iteration, recomputing $x$ requires $O(|N|)=O(m^2)$ arithmetic operations,  recomputing $y$ requires $O(m)$ operations, while recomputing $Qy$ requires $O(m^2)$ operations, thus overall these computations require $O\left(m^5\log{\rho_\Sigma^{-1}}\right)$ arithmetic operations.
Recomputing $R$ during rescalings requires $O(m^2|N|)=O(m^4)$ operations, while recomputing $Q=R^{-1}$ requires $O(m^3)$ operations,
for a total of $O\left(m^5\log{\rho_\Sigma^{-1}}\right)$ arithmetic operations over all rescalings.
\end{proof}

\section{Maximum support algorithms}\label{sec:max support}

In the case $\rho_A=0$, the volumetric arguments of the previous sections fail: both sets $P_A$ and $F_A$ are lower dimensional and therefore have volume 0. In what follows, we show how both algorithms naturally extend to this scenario.

Given any linear subspace $H$, we denote by $\supp(H_+)$ the {\em maximum support} of $H_+$, that is, the unique inclusion-wise maximal element of the family $\{\supp(x)\st x\in H_+\}$. Note that, since $H_+$ is closed under summation, it follows that $\supp(H_+)=\{i\in[n]\st\exists x\in H_+\; x_i>0\}$.
We denote
\begin{equation}\label{eq:support} S^*_A\eqdef \supp(\ker(A)_+),\quad
  T^* _A\eqdef \supp(\im(A^\T)_{+}).\end{equation}
When clear from the context, we will use the simpler notation $S^*$
and $T^*$.
Since $\ker(A)$ and $\im(A^\T)$ are orthogonal to each other, it is immediate that $S^*\cap T^*=\emptyset$. Furthermore, the  strong
duality theorem implies that $S^*\cup T^*=[n]$.

The Maximum Support Kernel Algorithm (Section~\ref{sec:kernel-max})  finds a
solution $x$ to \eqref{eq:primal} with $\supp(x)=S^*$, and the Maximum Support
Image Algorithm (Section~\ref{sec:image-max}) finds a solution $y$ to
\eqref{eq:dual} with $\supp(A^\top y)=T^*$.
In this section, we show that these algorithms can be directly obtained using
the full support algorithms, by repeatedly removing vectors from the support based
on their lengths after a sequence of rescalings. With this direct implementation
however, the maximum support algorithm runs the corresponding full support
algorithm $n$ times in the kernel case and $m$ times in the image case, leading
to an increase in running time. 

With small modifications, both maximum support
algorithms can be implemented in essentially the same asymptotic running time as
their full support counterparts. We defer these variants to
Appendix~\ref{sec:app-max}, since the amortized
analyses are somewhat technical. Still, they  offer some  interesting insights for
degenerate LPs. In particular, they show how to bound the degradation of an
ellipsoidal outer approximation of the feasible region when moving to a lower
dimensional space, which we believe to be of independent interest.

We will need to argue about lower dimensional ellipsoids and their volumes. Let $Q\in\bb{S}_{++}^d$.
For a linear subspace $H\subseteq \R^d$, we let $E_H(Q)\eqdef E(Q)\cap
H$. Further, we define the projected determinant of $Q$ on $H$ as
\[
\det_H(Q)\eqdef \det(W^\top Q W),
\]
where $W$ is any matrix whose columns form an orthonormal basis of
$H$. Note that the definition is independent of the choice of the
basis $W$. Indeed, if $H$ has dimension $r$, then
\begin{equation}\label{formula:ellipsoid-volume}
\vol_r(E_H(Q))=\frac{\nu_r}{\sqrt{\det_H(Q)}}.
\end{equation}

\subsection{The Maximum Support Kernel Algorithm}\label{sec:kernel-max}
We start with an easy observation; the proof is deferred to  Appendix~\ref{app:proofs}.
\begin{restatable}{lemma}{symmetrized}\label{lem:X-S} Let
  $A\in\R^{m\times n}$ and $S^*=S^*_A$. Then
$\spn(P_A)=\im(A_{S^*})$, and $P_A=P_{A_{S^*}}$.
\end{restatable}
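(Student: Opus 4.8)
Proof proposal for Lemma~\ref{lem:X-S} ($\spn(P_A)=\im(A_{S^*})$ and $P_A=P_{A_{S^*}}$).

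The plan is to exploit the characterization of $P_A=\conv(\hat A)\cap(-\conv(\hat A))$ as the ``symmetric part'' of $\conv(\hat A)$, and to connect it to the maximum support $S^*=\supp(\ker(A)_+)$ via the observation that $0\in\relint(\conv(\hat A_{S^*}))$. First I would show the inclusion $\spn(P_A)\subseteq \im(A_{S^*})$: if $z\in P_A$, then both $z$ and $-z$ lie in $\conv(\hat A)$, so writing $z=\sum_j \lambda_j\hat a_j$ and $-z=\sum_j\mu_j\hat a_j$ with $\lambda,\mu\ge 0$ summing to $1$, adding the two gives $0=\sum_j(\lambda_j+\mu_j)\hat a_j$ with $\sum_j(\lambda_j+\mu_j)=2$; hence the vector with entries $(\lambda_j+\mu_j)/\|a_j\|$ is a nonnegative, nonzero element of $\ker(A)$, so its support is contained in $S^*$. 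Therefore $\lambda_j=\mu_j=0$ for $j\notin S^*$, which means $z\in\spn(\{\hat a_j: j\in S^*\})=\im(A_{S^*})$. This simultaneously shows $P_A\subseteq \conv(\hat A_{S^*})\cap(-\conv(\hat A_{S^*}))=P_{A_{S^*}}$, since all the $\lambda_j,\mu_j$ for $j\notin S^*$ vanish.

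For the reverse inclusions, I would first note $P_{A_{S^*}}\subseteq P_A$ is immediate because $A_{S^*}$ is a submatrix of $A$ (any convex combination of columns of $A_{S^*}$ is a convex combination of columns of $A$), so combined with the previous paragraph we get $P_A=P_{A_{S^*}}$. It then remains to show $\im(A_{S^*})\subseteq \spn(P_A)=\spn(P_{A_{S^*}})$. Here I would use that $0\in\relint(\conv(\hat A_{S^*}))$: by definition of $S^*$ there is $x\in\ker(A)_+$ with $\supp(x)=S^*$, i.e.\ $\sum_{j\in S^*}x_j a_j=0$ with all $x_j>0$; rescaling, $0$ is a strictly positive combination of the $\hat a_j$, $j\in S^*$, hence $0\in\relint(\conv(\hat A_{S^*}))$ and this relative interior is an open neighborhood of $0$ inside $\im(A_{S^*})$. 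Consequently a small ball around $0$ within $\im(A_{S^*})$ is contained in $\conv(\hat A_{S^*})$, and being symmetric it is contained in $P_{A_{S^*}}$ as well; therefore $\spn(P_{A_{S^*}})=\im(A_{S^*})$, which closes the argument.

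The one point that needs a little care — and what I expect to be the main (though minor) obstacle — is justifying that $0\in\relint(\conv(\hat A_{S^*}))$ rather than merely $0\in\conv(\hat A_{S^*})$; this is exactly the content of Lemma~\ref{lem:rho}(i) applied to the submatrix $A_{S^*}$, once one checks $\rho_{A_{S^*}}<0$, which in turn follows from the existence of a strictly positive kernel vector supported on all of $S^*$. An alternative, perhaps cleaner route is to avoid relative interiors entirely: show directly that every column $\hat a_j$ with $j\in S^*$ lies in $\spn(P_A)$, by observing that for such $j$ there is an $x\in\ker(A)_+$ with $x_j>0$, so $\hat a_j=-\tfrac{1}{x_j}\sum_{i\ne j}x_i a_i$ is a nonnegative combination of the other columns; a short convexity manipulation then exhibits both $\hat a_j$ and $-\hat a_j$ as scalar multiples of points of $\conv(\hat A_{S^*})$, placing $\hat a_j\in\spn(P_{A_{S^*}})$. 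Either way the proof is short; I would present the first version and defer the elementary $\rho$-sign check to Lemma~\ref{lem:rho}.
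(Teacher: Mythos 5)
Your proposal is correct and follows essentially the same route as the paper: the argument that $P_A\subseteq P_{A_{S^*}}$ by summing the convex representations of $z$ and $-z$ to produce a nonnegative, nonzero kernel vector (forcing the coefficients outside $S^*$ to vanish) is exactly the paper's, as is the observation that $P_{A_{S^*}}\subseteq P_A$ is immediate. For the span equality the paper uses precisely your ``alternative route''---exhibiting, for each $i\in S^*$, a nonzero multiple $-\lambda_i\hat a_i$ inside $P_{A_{S^*}}$ via the strictly positive kernel vector supported on $S^*$---and your primary version via $0\in\relint(\conv(\hat A_{S^*}))$ together with a small symmetric ball in $\im(A_{S^*})$ is an equally valid minor variant.
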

In this section we observe that, if Algorithm~\ref{alg:primal-algorithm-matrix} is applied to a matrix $A$ with $\rho_A\geq 0$ (that is, \eqref{eq:main problem} is infeasible), then after a certain number of iterations,
based on the encoding size of $A$, we can establish  a column of $A$ that cannot be contained in $S^*_A$.
This is based on the observation contained in the next lemma that columns in $S^*_A$ need to
 remain ``short'' throughout the execution of Algorithm~\ref{alg:primal-algorithm-matrix}.

\begin{lemma}\label{lem:contained-in-ellipsoid}
Let $A\in\R^{m\times n}$ such that $\|a_i\|= 1$ for all $i\in
[n]$. Let $H=\im(A)$. After $t$ rescalings in Algorithm~\ref{alg:primal-algorithm-matrix} with $A$ as input, let $A'$ be the current matrix. Let $M\in\R^{m\times m}$ be the matrix obtained by combining all $t$ rescalings, so that $A'=MA$, and define $Q=(1+3\varepsilon)^{-2t}M^\T M$. The following hold.
\begin{enumerate}[(i)]
\item $P_A\subseteq E(Q)$,
\item $\|a_i\|_Q\le |\rho_{A_{S^*}}|^{-1}$ for every $i\in S^*$,
\item If $\mu:=\max_{i\in[n]}{\|a_i\|_Q}$, then $(\mu^{-1}\cdot|\rho_{(A,-A)}|)\bb{B}^m\cap H\subseteq E_H(Q)$,
\item At every rescaling, the relative volume of $E_H(Q)$ decreases by a factor at least $2/3$.
\end{enumerate}
\end{lemma}
\begin{proof} {\bf (i)} At intialization $Q=I_m$, so $P_A\subseteq E(Q)=\bb{B}^m$. After $t$ rescalings, by Lemma~\ref{lem:volume-increase} applied $t$ times, $M P_A\subseteq (1+3\varepsilon)^tP_{A'}$. Recall that by definition $P_{A'}\subseteq \bb{B}^m$, thus  $P_A\subseteq (1+3\varepsilon)^t M^{-1}\bb{B}^m=E(Q)$.

\noindent {\bf (ii)}  Lemma~\ref{lem:X-S} shows that $P_A=P_{{A_{S^*}}}$. Hence, by  Lemma~\ref{lem:rho} and the fact that $\|a_i\|_2=1$ for all $i\in [n]$, it follows that $|\rho_{A_{S^*}}|a_i\in P_{A}$ for all $i\in S^*$. From (i), $|\rho_{A_{S^*}}|a_i\in E(Q)$, which implies $|\rho_{A_{S^*}}|\|a_i\|_Q\leq 1$.

\noindent{\bf (iii)}  By definition, $\mu^{-1}  a_j\in E(Q)$, therefore $E_H(Q)$ contains $\mu^{-1}\cdot\conv(A,- A)$. Note that $\rho_{(A,-A)}<0$, hence  Lemma~\ref{lem:rho} implies that  $|\rho_{(A,-A)}|\bb{B}^m\cap H\subseteq \conv(A,-A)$. This implies $(\mu^{-1}\cdot\rho_{(A,-A)})\bb{B}^m\cap H\subseteq E_H(Q)$ as needed.

\noindent{\bf (iv)} Let $r=\rk(A)$. Rescaling corresponds to replacing  $M$ by $M'=(I_m+\hat y\hat y^\top)M$, where $y=A'x$ is the current point computed by the algorithm. Accordingly, matrix $Q$ is replaced by $Q'=(1+3\varepsilon)^{-2}M^\T M$. Since $y\in H$, the function $z\to (I_m+\hat y\hat y^\top)z$ is the identity over $H^\bot$ and it is an automorphism over $H$. This implies that  $E_H(Q') =(1+3\varepsilon) (I_m+\hat y\hat y^\top)^{-1} E_H(Q)$ and that $\vol_r(E_H(Q'))=(1+3\varepsilon)^r\det(I_m+\hat y\hat y^\top)^{-1} \vol_r(E_H(Q))$. The statement follows from the fact that $(1+3\varepsilon)^{r}\leq 4/3$ and $\det(I_m+\hat y\hat y^\top)=2$.
\qued \end{proof}

\paragraph{Basic Maximum Support Kernel Algorithm.}
Based on the previous lemma, we can immediately describe an algorithm for the
maximum support kernel problem for a matrix $A\in \Z^{m\times n}$ of encoding
size $L$. Let $\bart:=\bart_A$ defined in \eqref{def:delta}, recalling that $\bart \ge 2^{-4L}$.
Let $S^* := S^*_A$.  Observe that by
Lemma~\ref{lem:numerical-shit} we have $|\rho_{A_{S^*}}| \geq \bart$ if $S^*\neq
\emptyset$ and $|\rho_{(A_S,-A_S)}|\geq \bart$ for any $\emptyset \neq S
\subseteq [n]$ (indeed, note that $\Delta_A\geq \Delta_{A_S}=\Delta_{(A_S,-A_S)}$ by definition).

We start with initial guess $S = [n]$ for the support $S^*$. To get a max support
solution we will iteratively run a slightly modified version of
Algorithm~\ref{alg:primal-algorithm-matrix} on the matrix $\hat A_S$, which will
either return a full support solution $\hat{A}_S x = 0$, $x > 0$, or an index $k
\in S\sm S^*$. In the former case, we return $x$ with zeros on the
components in $[n] \setminus S$ as a max support solution. In the latter case,
we replace $S := S \setminus \set{k}$ and rerun the modified
Algorithm~\ref{alg:primal-algorithm-matrix} on $\hat{A}_S$. We continue this
process until either $S = \emptyset$ or a solution is found.

For the modified Algorithm~\ref{alg:primal-algorithm-matrix} on $\hat{A}_S$, the
only change is a step which recognizes when an index in $S$ is not in the
support $S^*$. For this purpose, we maintain the vector lengths
$\|\hat a_i\|_Q$ as in
in Lemma~\ref{lem:contained-in-ellipsoid} applied to $\hat A_S$. After each
rescaling, which updates $Q$, we simply check if there is an index $k \in S$
such that $\|\hat{a}_k\|_Q > \bart^{-1}$ (here $\hat{a}_k$ refers the normalized
column of the original matrix $A$), and if so, we return it as an index not in $S^*$. Note that this
assertion is justified by
Lemma~\ref{lem:contained-in-ellipsoid}(ii). Let us note that if $A'$
is the current matrix in Algorithm~\ref{alg:primal-algorithm-matrix}
on $\hat{A}_S$
after $t$ rescalings, then $\|\hat
a_i\|_Q=\|a'_i\|/(1+3\varepsilon)^t$. Therefore, we do not need to
maintain the matrix $Q$ explicitly. 

\medskip

We now bound the running time of the modified
Algorithm~\ref{alg:primal-algorithm-matrix} at every call. Let $S \supseteq S^*$
be the current support, $H = \im(\hat{A}_S)$ and $r := \rk(A_S) \leq m$. Note
that as long as we have not identified a column to remove, which would end the
current call on $\hat{A}_S$, by part (iii) of the above lemma we have that
$\bart^{2}\bb{B}^m \cap H \subseteq E_H(Q)$, hence $\vol_r(E_H(Q)) \geq
\bart^{2r} \nu_r$. Since initially $\vol(E_H(Q))_r= \nu_r$, by part (iv) we
conclude that the number $K$ of rescalings is bounded by $O(\log(\bart^{2r}))$,
hence $K\in O(mL)$ (since $r \leq m$). By
Lemma~\ref{lem:delta-rounding} the call to Algorithm~\ref{alg:primal-algorithm-matrix} terminates as soon as the current
vector $y$ has norm less than $\bart \leq |\rho_{(A_S,-A_S)}|$, hence the number of
DV iterations can be bounded exactly as in the proof of
Theorem~\ref{thm:main-matrix} by $O(m^2(n+K+\log(\bart^{-1})))=O(m^3L)$. The
proof of Theorem~\ref{thm:main-matrix} also shows that each DV update can be
performed in time $O(n)$ and each rescaling can be computed in time $O(n^2)$.
Hence each call to Algorithm~\ref{alg:primal-algorithm-matrix} requires
$O((m^3n+mn^2)L)$ arithmetic operations, so that the overall number of
operations to compute a maximum support solution is
$O((m^3n^2+mn^3)L)$.

%%% Local Variables:
%%% mode: latex
%%% TeX-master: "Coordinate_descent"
%%% End:

\subsection{The Maximum Support Image Algorithm}\label{sec:image-max}

In this section we show that if Algorithm~\ref{alg:dual-alg-full} is applied to a
matrix $A$ with $\rho_A \leq 0$ (that is, \eqref{eq:main dual} is infeasible), then after a certain number of iterations,
based on the encoding size of $A$, we can pinpoint an index $k \in
[n]\sm T^*_A$. 

The following will be a key concept in the analysis. Given a convex set $X\subset \R^d$ and a vector $a\in \R^d$, we define the {\em width of $X$ along $a$} as
\begin{equation}\label{eq:set width}\width_X(a)\eqdef \max  \{a^\T z\st z\in X\}.\end{equation}

\begin{lemma}\label{lemma:ellipsoid width}
Given $R\in\bb{S}^d_{++}$, let $E:=E(R)$. For any  $a\in\R^d$,  $\width_E(a)=\|a\|_{R^{-1}}$.
\end{lemma}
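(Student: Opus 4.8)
The plan is to compute $\width_E(a) = \max\{a^\T z : \|z\|_R \le 1\}$ directly by a change of variables that turns the ellipsoid $E(R)$ into the unit ball. Recall from the preliminaries that $E(R) = R^{-1/2}\B^d$, so writing $z = R^{-1/2}w$ we have $z \in E(R)$ if and only if $\|w\|_2 \le 1$. Then
\[
\width_E(a) = \max_{\|w\|_2 \le 1} a^\T R^{-1/2} w = \max_{\|w\|_2 \le 1} (R^{-1/2}a)^\T w,
\]
using that $R^{-1/2}$ is symmetric.

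The next step is to apply the Cauchy--Schwarz inequality (equivalently, the fact that the maximum of a linear functional over the Euclidean unit ball equals the Euclidean norm of its gradient): the maximum over $\|w\|_2 \le 1$ of $(R^{-1/2}a)^\T w$ equals $\|R^{-1/2}a\|_2$, attained at $w = R^{-1/2}a / \|R^{-1/2}a\|_2$ (and equal to $0$ when $a = 0$, consistent with the formula). Finally, I identify this norm with the $R^{-1}$-norm of $a$:
\[
\|R^{-1/2}a\|_2^2 = (R^{-1/2}a)^\T(R^{-1/2}a) = a^\T R^{-1/2}R^{-1/2} a = a^\T R^{-1} a = \|a\|_{R^{-1}}^2,
\]
where I use $R^{-1/2}R^{-1/2} = R^{-1}$, which follows since $R^{1/2}$ is the unique positive definite square root of $R$ and $R^{-1/2}$ its inverse. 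Combining the three displays gives $\width_E(a) = \|a\|_{R^{-1}}$.

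There is essentially no obstacle here; the only thing to be careful about is the degenerate case $a = 0$, where both sides are $0$, and to make sure the change of variables $z = R^{-1/2}w$ is applied in the correct direction (since $\vol(E(Q)) = \det(Q)^{-1/2}\nu_d$ and $E(Q) = Q^{-1/2}\B^d$ are already recorded in the preliminaries, this is immediate). I expect the whole argument to be three or four lines.
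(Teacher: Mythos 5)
Your proof is correct and takes essentially the same route as the paper: both arguments reduce to Cauchy--Schwarz after factoring through $R^{\pm 1/2}$, and your maximizer $w = R^{-1/2}a/\|R^{-1/2}a\|_2$ is exactly the paper's $z = R^{-1}a/\|a\|_{R^{-1}}$ after undoing the change of variables. No gaps; the handling of $a=0$ is a fine (if optional) remark.
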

\begin{proof}
For every $z\in E$, $a^\T z=a^\T R^{-1/2}R^{1/2}z\leq \|a\|_{R^{-1}}\|z\|_{R}\leq \|a\|_{R^{-1}}$ where the first inequality follows from the Cauchy-Schwarz inequality and the second from $z\in E$. On the other hand, if we define $z=R^{-1}a/\|a\|_{R^{-1}}$, it follows that $z\in E$ and $a^\T z=\|a\|_{R^{-1}}$.
\qued \end{proof}

Let us introduce
\begin{equation}
{\omega}_A\eqdef\min_{i\in T_A^*}
\width_{F_A}(\hat a_i), \label{def:omega}
\end{equation}
%where the width of a set was defined in \eqref{eq:set width}. 
The
quantity $\omega_A$ is related to $\rho_{A}$, as illustrated by
the next claim,  whose straightforward proof can be found in Appendix~\ref{app:proofs}.
We recall that $\bart_A$ was defined in \eqref{def:delta}.

\begin{restatable}{Claim}{omegab}\label{cl:omega-bound}
If $T_A^*=[n]$, then $\omega_A\ge \rho_{A}$. If $T_A^*\neq \emptyset$ and $A$ has integer entries, then $\omega_A\ge \bart_A$.
\end{restatable}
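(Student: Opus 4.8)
The plan is to prove each part by exhibiting an explicit point of $F_A$ that realizes the required width.

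For the first part, assume $T_A^*=[n]$. Then \eqref{eq:main dual} is feasible, so $\rho_A>0$ by Lemma~\ref{lem:rho}, and in particular every column of $A$ is nonzero. Let $\hat y$ be a unit vector in $\im(A)$ attaining the maximum in \eqref{def:rho}, so that $\hat a_j^\T\hat y\ge\rho_A>0$ for all $j\in[n]$. Then $A^\T\hat y\ge 0$ and $\|\hat y\|=1$, i.e.\ $\hat y\in F_A$, hence $\width_{F_A}(\hat a_i)\ge\hat a_i^\T\hat y\ge\rho_A$ for every $i$; taking the minimum over $i\in T_A^*$ gives $\omega_A\ge\rho_A$.

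For the second part, fix $i\in T_A^*$ and put $r=\rk(A)$. The witness point will be a vertex of the polyhedron $\mathcal P=\{y\in\im(A):A^\T y\ge 0,\ a_i^\T y=1\}$. First I would check that $\mathcal P\neq\emptyset$: since $i\in T_A^*$ there is $y^\circ$ with $A^\T y^\circ\ge 0$ and $a_i^\T y^\circ>0$, and projecting $y^\circ/(a_i^\T y^\circ)$ onto $\im(A)$ (which changes none of the values $a_j^\T y$) gives a point of $\mathcal P$. Moreover the recession cone $\{d\in\im(A):A^\T d\ge 0,\ a_i^\T d=0\}$ of $\mathcal P$ is pointed, because $\Sigma_A\cap\im(A)$ contains no line ($\pm d\in\Sigma_A$ forces $A^\T d=0$, and $\ker(A^\T)\cap\im(A)=\{0\}$). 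Hence $\mathcal P$ has a vertex $\bar y$; among the constraints tight at $\bar y$ we may select $r$ whose normals form a basis of $\im(A)$ and which include $a_i$ (possible since $a_i\neq 0$ and the equality $a_i^\T y=1$ is tight). This yields $J\subseteq[n]$ with $|J|=r-1$, $\{a_j:j\in J\cup\{i\}\}$ a basis of $\im(A)$, and $a_j^\T\bar y=0$ for $j\in J$. Writing $B=A_{J\cup\{i\}}$ and using $\bar y\in\im(B)$, the tightness conditions read $B^\T\bar y=e$, where $e$ is the indicator of coordinate $i$ inside $J\cup\{i\}$; therefore $\bar y=B(B^\T B)^{-1}e$ and $\|\bar y\|^2=e^\T(B^\T B)^{-1}e=\det(A_J^\T A_J)/\det(B^\T B)$.

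The remaining step is a routine size estimate using integrality of $A$. By the Cauchy--Binet formula $\det(B^\T B)=\sum_{|S|=r}(\det B_{S,\cdot})^2\ge 1$, since $B$ has full column rank and hence some $r\times r$ minor of the integer matrix $B$ is a nonzero integer; and $\det(A_J^\T A_J)$, the Gram determinant of $\{a_j:j\in J\}$, is at most $\prod_{j\in J}\|a_j\|^2\le\Delta_A^2$ by Hadamard's inequality together with $J\in\mathcal B$. Thus $\|\bar y\|\le\Delta_A$. Since $a_i\neq 0$ we also have $\{i\}\in\mathcal B$ and so $\|a_i\|\le\Delta_A$; normalizing $\hat y:=\bar y/\|\bar y\|\in F_A$ gives $\width_{F_A}(\hat a_i)\ge\hat a_i^\T\hat y=\frac{1}{\|a_i\|\,\|\bar y\|}\ge\frac{1}{\Delta_A^2}\ge\bart_A$. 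Minimizing over $i\in T_A^*$ yields $\omega_A\ge\bart_A$.

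The first part is immediate; the work is in the second. The subtlety there is that one cannot simply use an inscribed ball of the relaxed cone $\Sigma_{A_{T_A^*}}$ (such a ball typically meets the full cone $\Sigma_A$ only at the origin), nor run a volumetric argument (the set $F_A$ is lower dimensional when $\rho_A\le 0$). The vertex/basic-solution description together with Cramer's rule and Cauchy--Binet is exactly what keeps the witness point inside the full cone $\Sigma_A$ while controlling its norm by $\Delta_A$.
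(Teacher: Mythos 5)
Your proof is correct, but the second part takes a genuinely different route from the paper's. The paper first applies the min--max inequality, $\omega_A=\min_{j\in T^*}\max_{y\in\Sigma_A\sm\{0\}}\hat a_j^\T\hat y\ \ge\ \max_{y\in\Sigma_A\sm\{0\}}\min_{j\in T^*}\hat a_j^\T\hat y$, which reduces the task to producing a \emph{single} witness $y\in\Sigma_A$ that is uniformly good for all $j\in T^*$; that witness is a basic optimal solution of the LP $\min\vec{e}^\T s$ subject to $A^\T_{T^*}y\ge\vec{e}$, $A^\T_{S^*}y=0$, $s\ge\pm y$, whose $\ell_1$-norm is bounded entrywise by $m\Delta_A$ per coordinate via Cramer's rule, giving $\|y^*\|_1\le m^2\Delta_A$ and hence the bound $1/(m^2\Delta_A^2)$. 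You instead skip the min--max swap and build a \emph{separate} witness for each $i\in T^*$, namely a vertex of $\{y\in\im(A):A^\T y\ge 0,\ a_i^\T y=1\}$, and you control its \emph{Euclidean} norm exactly, $\|\bar y\|^2=\det(A_J^\T A_J)/\det(B^\T B)$, via the Gram/Cauchy--Binet/Hadamard combination. The two approaches trade off as follows: the paper's single-witness argument is shorter and simultaneously yields the intermediate quantity $\eta_A=\max_{y}\min_{j\in T^*}\hat a_j^\T\hat y$, which is what directly generalizes $\rho_A$ and is reused in the proof of Lemma~\ref{lem:numerical-shit}; your per-column construction costs more case analysis (nonemptiness and pointedness of $\mathcal P$, extraction of a basis containing $a_i$) but avoids the $\ell_1$-to-$\ell_2$ slack and the per-entry Cramer bound, yielding the sharper estimate $\omega_A\ge 1/\Delta_A^2$ rather than $1/(m^2\Delta_A^2)=\bart_A$, which of course still implies the claim. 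Your first part is essentially the paper's argument with the max-min optimizer made explicit as a point of $F_A$.
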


The next lemma provides the main tools to detect columns $k\notin T_A^*$ in the
Full Support Image Algorithm. We recall that $F_A$ is as defined in~\eqref{def:FA}.

\begin{lemma}\label{lem:lower-width-2}
Let $R\in\bb{S}_{++}^m$ such that $F_A\subseteq E(R)$, and let $Q=R^{-1}$. For $k\in[n]$, if $\|\hat a_k\|_Q< \omega_A$ then $k\notin
T_A^*$.
\end{lemma}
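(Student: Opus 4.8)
The plan is to prove the contrapositive: assuming $k \in T_A^*$, I will show $\|\hat a_k\|_Q \ge \omega_A$. Since $k \in T_A^*$, by the definition \eqref{def:omega} of $\omega_A$ we immediately have $\width_{F_A}(\hat a_k) \ge \omega_A$. So it suffices to relate $\width_{F_A}(\hat a_k)$ to $\|\hat a_k\|_Q$. The key geometric fact is the hypothesis $F_A \subseteq E(R)$, which is monotone under taking widths: for any vector $a$, $\width_{F_A}(a) \le \width_{E(R)}(a)$ because the maximum of $a^\T z$ over the larger set $E(R)$ is at least the maximum over $F_A$.

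The second ingredient is Lemma~\ref{lemma:ellipsoid width}, which states precisely that $\width_{E(R)}(a) = \|a\|_{R^{-1}} = \|a\|_Q$ for any $a \in \R^m$. Combining these two observations applied to $a = \hat a_k$ gives
\[
\omega_A \le \width_{F_A}(\hat a_k) \le \width_{E(R)}(\hat a_k) = \|\hat a_k\|_Q,
\]
which is exactly the contrapositive of the claimed implication: if $\|\hat a_k\|_Q < \omega_A$ then $k \notin T_A^*$.

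I expect essentially no obstacle here — the lemma is a two-line consequence of the width-monotonicity of set inclusion together with the already-established formula for the width of an ellipsoid (Lemma~\ref{lemma:ellipsoid width}). The only point requiring minor care is the direction of the inequality in the width-monotonicity step (larger set $\Rightarrow$ larger or equal width), and making sure the hypothesis is $F_A \subseteq E(R)$ rather than the reverse; both are as needed. One could equally phrase the argument directly (rather than via the contrapositive): if $\|\hat a_k\|_Q < \omega_A$, then $\width_{F_A}(\hat a_k) \le \width_{E(R)}(\hat a_k) = \|\hat a_k\|_Q < \omega_A$, so $\hat a_k$ does not attain the minimum defining $\omega_A$ over indices in $T_A^*$, hence $k \notin T_A^*$.
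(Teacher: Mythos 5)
Your proof is correct and is essentially identical to the paper's: both argue the contrapositive by combining the width formula of Lemma~\ref{lemma:ellipsoid width} (giving $\|\hat a_k\|_Q=\width_{E(R)}(\hat a_k)$) with the monotonicity of width under the inclusion $F_A\subseteq E(R)$ and the definition of $\omega_A$ as a minimum over $T_A^*$. No issues.
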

\begin{proof}
From Lemma~\ref{lemma:ellipsoid
  width} and $F_A\subseteq E(R)$, we have
$
\|\hat a_k\|_Q={\width_{E(R)}(\hat a_k)}\ge
{\width_{F_A}(\hat a_k)}\ge \omega_A
$.
\qued
\end{proof}

Lemma~\ref{lem:dual-vol-dec} shows that in Algorithm~\ref{alg:dual-alg-full}, $\det(R)$ increases at least by a factor
$16/9$ in every rescaling.
The following lemma bounds $\min_{k\in [n]}\|\hat
a_k\|_Q$ in terms of $\det(R)$.
\begin{lemma}\label{lem:a-k-Q-bound}
At any stage of Algorithm~\ref{alg:dual-alg-full} applied to $A \in \R^{m \times
n}$, if $\det(R)>1$, then there exists $k\in [n]$ such that
$\|\hat a_k\|_Q\le {(\det(R)^{1/m}-1)}^{-1/2}$.
\end{lemma}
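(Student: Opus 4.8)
The plan is to exploit the explicit structural decomposition of $R$ provided by Lemma~\ref{lem:gamma-form}, which lets us write $R=\alpha I_m+\sum_{i=1}^n\gamma_i\hat a_i\hat a_i^\T$ with $\alpha=(1+\varepsilon)^{-t}\le 1$ (where $t$ is the number of rescalings performed so far) and all $\gamma_i\ge 0$, together with the identity $\tr(R)=\alpha m+\sum_{i=1}^n\gamma_i$. The idea is to find the desired index $k$ by an averaging argument: I will show that a $\gamma$-weighted average of the quantities $\|\hat a_i\|_Q^2=\hat a_i^\T Q\hat a_i$ is at most $(\det(R)^{1/m}-1)^{-1}$, so some term in the average — hence some $\|\hat a_k\|_Q^2$ — is at most this value.

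For the numerator of the weighted average, I would compute $\sum_{i=1}^n\gamma_i\|\hat a_i\|_Q^2=\sum_i\gamma_i\tr(Q\hat a_i\hat a_i^\T)=\tr\!\big(Q(R-\alpha I_m)\big)=\tr(I_m)-\alpha\tr(Q)\le m$, using $QR=I_m$ and $\tr(Q)>0$. For the denominator, I would bound $\sum_{i=1}^n\gamma_i=\tr(R)-\alpha m$ from below: Lemma~\ref{lem:linalg}(\ref{det-trace-2}) applied to $R$ gives $m\det(R)^{1/m}\le\tr(R)$, and since $\alpha\le 1$ this yields $\sum_{i=1}^n\gamma_i\ge\tr(R)-m\ge m\big(\det(R)^{1/m}-1\big)$. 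Because $\det(R)>1$ this lower bound is strictly positive, so in particular not all $\gamma_i$ vanish and the weighted average is well defined.

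Combining the two estimates, $\min_{i\in[n]}\|\hat a_i\|_Q^2\le\big(\sum_i\gamma_i\|\hat a_i\|_Q^2\big)/\big(\sum_i\gamma_i\big)\le m/\big(m(\det(R)^{1/m}-1)\big)=(\det(R)^{1/m}-1)^{-1}$, and taking $k$ to be the minimizer and extracting a square root finishes the argument. The only points requiring a little care — and the closest thing to an obstacle — are bookkeeping ones: checking that $\sum_i\gamma_i>0$ so the averaging step is legitimate (which is exactly where the hypothesis $\det(R)>1$ enters), and keeping the inequality directions straight when passing from $\tr(R)-\alpha m$ to $\tr(R)-m$ via $\alpha\le 1$. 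No genuinely delicate computation is involved beyond invoking Lemma~\ref{lem:gamma-form} and Lemma~\ref{lem:linalg}(\ref{det-trace-2}).
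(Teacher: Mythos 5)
Your proof is correct and is essentially the paper's own argument: both use the decomposition of $R$ from Lemma~\ref{lem:gamma-form}, the trace identity $\sum_i\gamma_i\|\hat a_i\|_Q^2=\tr(Q(R-\alpha I_m))<m$, and Lemma~\ref{lem:linalg}(ii) to lower bound $\sum_i\gamma_i\ge m(\det(R)^{1/m}-1)$, finishing by the same averaging step. No substantive difference.
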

\begin{proof}
Let $k=\arg\min_{i\in T}\|\hat a_i\|_Q$. Let us use the decomposition of $R$ as in
Lemma~\ref{lem:gamma-form}. Then
\begin{align}\label{eq:a_k bound}
\|\hat a_k\|^2_Q\sum_{i=1}^n\gamma_i &\leq \sum_{i=1}^n\gamma_i \|\hat a_i\|^2_Q=\sum_{i=1}^n\gamma_i
\left(\hat a_i^\T Q \hat a_i\right)
= \tr\left(Q \sum_{i=1}^n\gamma_i
  \hat a_i\hat a_i^\T\right)\\
&= \tr(Q(R-\alpha I_m))=\tr(I_m-\alpha Q)=m-\alpha \tr(Q)<m~. \nonumber
\end{align}
The third equality used the decomposition of $R$, the fourth used $QR=I_m$, and the final inequality holds since $Q$
is positive definite.

The fact that $\tr(R)=\alpha m+\sum_{i=1}^n\gamma_i\le m+\sum_{i=1}^n\gamma_i$ and Lemma~\ref{lem:linalg}(\ref{det-trace-2}) imply that $\sum_{i=1}^n\gamma_i\geq \tr(R)-m\ge m(\det(R)^{1/m}-1)$. Note that the latter term is positive because $\det(R)>1$, therefore the statement follows from \eqref{eq:a_k bound}.
\qued \end{proof}

\paragraph{Basic Maximum Support Image Algorithm} In light of the
above lemmas, we can extend the Full
Support Image Algorithm (Algorithm~\ref{alg:dual-alg-full}) to the maximum
support case for a matrix $A\in \Z^{m\times n}$ of encoding size
$L$ as follows. Let us assume that $rk(A)=m$.
We again use $\bart_{ A}$ as in
\eqref{def:delta} and observe that by Claim~\ref{cl:omega-bound}, $\omega_A\ge\bart_A$ whenever $T^*\neq\emptyset$.
We run Algorithm~\ref{alg:dual-alg-full} until either we can find
$y$ such that $A^\top Qy>0$, or we find an index $k$ such that  $\|\hat
a_k\|_Q< \bart_A$.

Lemmas~\ref{lem:dual-vol-dec} and \ref{lem:a-k-Q-bound} guarantee that
either outcome is reached within $O(mL)$
rescalings. In the first case,  we terminate with the maximum
support solution $Qy$.
 Lemma~\ref{lem:lower-width-2} guarantees that in the
second case, $k\notin T^*$.

Once an index $k\notin T^*$ is identified, we must have $a_k^\top y=0$
for every solution $y$ to \eqref{eq:dual}.
Hence, the necessary update is to project the columns of $A$ onto the
subspace $a_k^\bot$.
 Formally, we compute an orthonormal basis $W\in \R^{m\times (m-1)}$
 of $a_k^\bot$, and replace the matrix $A$ by $A'$ obtained from $W^\top A$ by removing the zero columns. Then, we
 recursively apply the same algorithm to $A'$ instead of $A$. Assume
 we obtain $y'$ as the output from the recursive call, such that
 ${A'}^\top y$ is a maximum support vector in
 $\im({A'}^\top)_{+}$. Then, we output the vector
 $y=Wy'$ for the original matrix $A$.

To verify the correctness of this recursive call, we need to show that
the maximum support solutions to $A$ and $A'$ are in one-to-one
correspondence. Furthermore, we need to provide a lower bound on
$\omega_{A'}$ in terms of $L$. 
 We show that $\omega_{A'}\ge\omega_A$, and therefore
$\bart:=\bart_{A}$ remains a valid lower bound.
These claims are
formally verified in Lemma~\ref{lem:im-dim-red} below.

To estimate the running time of the algorithm, we recall
that a new column outside $T^*$ can be identified within $O(mL)$
rescalings, and there are at most $m$ recursive calls, since every
call decreases the rank of the matrix $A$.
As in the full support algorithm, we can implement
the iterations between two rescalings in $O(n^2m)$ arithmetic
operations. Further, we need to compute orthonormal bases at every
recursive call, which can be done in time $O(r^2)$ for the current
rank $r$. Thus, we obtain a total running time $O(n^2 m^3 L)$.

\begin{lemma}\label{lem:im-dim-red}
Let $A \in \R^{m \times n}$ and  $H \subseteq \R^m$ an $r$-dimensional subspace such that $\Sigma_{A }\subseteq H$. Let $U
\in \R^{m \times r}$ be an orthornormal basis of $H$ and let $A'$ be the
matrix obtained from $U^\top A$ after removing all 0 columns. The following hold:
\begin{enumerate}[(i)]
\item $F_{A} = U F_{A'}$.
\item For $v \in \R^m$, $w = U^\top v$, we have $\width_{F_{A}}(v) =
\width_{F_{A'}}(w)$ and $\width_{F_{A}}(\hat v) \leq \width_{F_{A'}}(\hat w)$.
\item $\omega_{A} \leq \omega_{A'}$.
\end{enumerate}

\end{lemma}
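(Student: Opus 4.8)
The plan is to reduce everything to the single identity $F_A = U F_{A'}$, which in turn rests on the fact that $U$ is an isometry of $\R^r$ onto $H$ (so $U^\T U = I_r$, $\|Uz\|_2 = \|z\|_2$ for all $z$, and $UU^\T$ is the orthogonal projection onto $H$) together with the hypothesis $\Sigma_A \subseteq H$. First I would prove (i). For $y \in H$ write $y = Uz$ with $z := U^\T y$. The columns of $U^\T A$ are the vectors $U^\T a_i$, and the ones deleted in forming $A'$, namely those with $U^\T a_i = 0$, impose only the vacuous inequality $(U^\T a_i)^\T z = 0 \ge 0$; hence $A^\T y \ge 0 \Leftrightarrow (U^\T a_i)^\T z \ge 0 \ \forall i \in [n] \Leftrightarrow (A')^\T z \ge 0$. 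Thus $U\Sigma_{A'} \subseteq \Sigma_A$, and since by hypothesis every $y \in \Sigma_A$ already lies in $H$, writing $y = Uz$ with $z = U^\T y \in \Sigma_{A'}$ gives the reverse inclusion, so $\Sigma_A = U\Sigma_{A'}$. Because $U$ preserves the Euclidean norm, $\B^m \cap H = U\B^r$, and intersecting yields $F_A = \Sigma_A \cap \B^m = U(\Sigma_{A'} \cap \B^r) = U F_{A'}$.

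Part (ii) is then immediate. For $v \in \R^m$ and $w = U^\T v$,
\[
\width_{F_A}(v) = \max_{z \in F_{A'}} v^\T Uz = \max_{z \in F_{A'}} w^\T z = \width_{F_{A'}}(w).
\]
For the normalized inequality, note that $0 \in F_{A'}$ forces $\width_{F_{A'}}(w) \ge 0$, while $\|w\|_2 = \|U^\T v\|_2 \le \|v\|_2$ since $U^\T$ is a contraction; dividing the displayed equality by $\|v\|_2$ and using $\hat v = v/\|v\|_2$ gives $\width_{F_A}(\hat v) = \width_{F_{A'}}(w)/\|v\|_2 \le \width_{F_{A'}}(w)/\|w\|_2 = \width_{F_{A'}}(\hat w)$. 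The degenerate cases $v=0$ or $w=0$ are covered by the convention $\hat 0 = 0$.

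For (iii) I would first show that $T_A^*$ and $T_{A'}^*$ coincide once the columns of $A'$ are identified with the indices $N := \{i \in [n] : U^\T a_i \ne 0\}$ they come from. Using the characterization of the maximum support ($i$ lies in $T_A^*$ iff there is $y \in \Sigma_A$ with $a_i^\T y > 0$), substituting $\Sigma_A = U\Sigma_{A'}$ and $a_i^\T(Uz) = (U^\T a_i)^\T z$ shows this is equivalent to the existence of $z \in \Sigma_{A'}$ with $(U^\T a_i)^\T z > 0$, which both forces $i \in N$ and is exactly the defining condition for $i$, viewed as a column of $A'$, to lie in $T_{A'}^*$. Hence $T_A^* = T_{A'}^*$, and by (ii) applied to $v = a_i$ we get $\width_{F_A}(\hat a_i) \le \width_{F_{A'}}(\hat a_i')$ for every $i \in T_A^*$; taking the minimum over $T_A^*$ yields $\omega_A \le \omega_{A'}$ (if $T_A^* = \emptyset$ both sides are a minimum over the empty set and there is nothing to prove).

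The only delicate points are bookkeeping ones: verifying that the deleted zero columns impose no constraint on $\Sigma_A$ (which is where $\Sigma_A \subseteq H$ enters), and that the support set $T^*$ transports correctly between $A$ and $A'$. The sole place where an inequality rather than an equality appears in (ii) and (iii) is the contraction bound $\|U^\T v\|_2 \le \|v\|_2$. I do not expect any substantial analytic difficulty.
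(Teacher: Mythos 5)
Your proposal is correct and follows essentially the same route as the paper: part (i) by checking the constraints under the isometry $U$ (you phrase it as $\Sigma_A = U\Sigma_{A'}$ and intersect with $\B^m\cap H = U\B^r$, the paper verifies membership in $F_A$ and $UF_{A'}$ directly, which is the same computation), part (ii) via the width identity $\width_{F_A}(v)=\width_{F_{A'}}(U^\top v)$ plus the contraction $\|U^\top v\|\le\|v\|$ and $0\in F_{A'}$, and part (iii) by identifying $T^*_A$ with $T^*_{A'}$ and applying the normalized inequality of (ii) to each $a_i$. Your explicit verification of the $T^*$ identification via the support characterization is slightly more detailed than the paper's one-line remark, but the argument is the same; no gaps.
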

\begin{proof} {\bf(i)}  Take $x\in U F_{A'}$ and $y\in F_{A'}$ such that $x= U y $.
Firstly, $\|x\| = \|Uy\| = \|y\| \leq 1$ since $y \in F_{A'}$. For $i \in [n]$,
note that if $U^\T a_i = 0$ then $a_i^\T x = (U^\T a_i)^\T y = 0$,
and if not, $a_i$ appears as a column of $A'$ and hence $a_i^\T x =
(U^\T a_i) y \geq 0$ since $y \in F_{A'}$. Thus $x \in F_{A}$. For $x \in F_{A}$, since $F_{A}\subseteq H$, we can write $x = U y$ for $y\in \R^r$. Applying
the previous argument in reverse, we conclude that $y \in F_{A'}$ and hence $x \in
U F_{A'}$. Thus $F_{A} = U F_{A'}$ as needed.

\noindent {\bf (ii)} The equality follows directly from part (i) since
\[
\width_{F_{A}}(v) = \width_{UF_{A'}}(v) = \width_{F_{A'}}(U^\top v) = \width_{F_{A'}}(w)\text{.}
\]
We prove the inequality. By positive homogeneity of width, if either $v$
or $w$ equals $0$, we clearly have $0 = \width_{F_{A}}(\hat v) = \width_{F_{A'}}(\hat
w)$ and the statement follows. So we may assume that both $v,w \neq 0$. Since
$U$ is orthonormal, we see that $0 < \|w\| = \|U^\top v\| \leq \|v\|$. Since
$0 \in F_{A}$, by homogeneity we have that
\[
0 \leq \width_{F_{A}}(\hat v) = \frac{1}{\|v\|} \width_{F_{A}}(v)
= \frac{1}{\|v\|} \width_{F_{A'}}(w)
\leq \frac{1}{\|w\|} \width_{F_{A'}}(w) = \width_{F_{A'}}(\hat w)~,
\]
as needed.

\noindent {\bf (iii)}  First, note that the set $T^*_{ A'}$ comprises the indices of
  the columns $U^\top  a_i$ for which $i\in T^*_{A}$, that is,
  $\width_{F_{A'}}(a_i) > 0 $.
The inequality follows from the last statement in part (ii).
\end{proof}

%%% Local Variables:
%%% mode: latex
%%% TeX-master: "Coordinate_descent"
%%% End:

\section{Conclusions}
We have given polynomial time algorithms for the full support and
maximum support versions of the kernel and image problems.
These methods give new insights on how to leverage
the underlying geometry of linear (and more generally conic)
programs. \lnote{removed ellipsoid}
\iffalse At a high
level, geometric rescaling algorithms can be seen as studying conic analogues
of the ellipsoid method, where the goal is to approximate the ``shape'' of a
cone rather than that of a convex body. This generalization is
interesting due to the projective nature of conic geometry, which provides an extra
degree of freedom from the perspective of geometric rescaling.
\fi

There is an important conceptual difference between the full support and maximum support variants.
The running time of the full support kernel and image algorithms depends on $\log |\rho_A|^{-1}$. However, the algorithms do not require explicit knowledge of $\rho_A$; this parameter only shows up in the running time analysis. These algorithms can be implemented in the real model of computation.

In contrast, the maximum support variants rely on bit complexity
estimations. The algorithms require an integer input matrix, and use $\theta_A$, computed from the Hadamard bound, as a threshold for removing columns from the support.
Given the duality between maximum support versions of~\eqref{eq:main problem}
and~\eqref{eq:main dual}, the most natural goal would be to find a
\emph{complementary pair} of maximum support solutions to~\eqref{eq:primal} and~\eqref{eq:dual}, since such solutions are self-certifying
(i.e.~each would certify that the other is indeed a max support solution).
Developing a rescaling algorithm which solves this problem directly using
natural geometric potentials, as opposed to the bit complexity arguments above,
is an interesting open problem.

We note that the interior point method of
Vavasis and Ye~\cite{vavasis1995,Ye94} provides a complementary pair in the real model of
computation, based on certain condition measures (one of them being
related to our $\omega_A$). However, these condition measures do not improve over the course
of the algorithm. Our goal would be to find an algorithm which
finds a rescaling of the problem that simultaneously approximates both kernel and image
geometries.

\bibliographystyle{abbrv}
\bibliography{rescaled}

\appendix

\section{Faster algorithms for the maximum support problems}\label{sec:app-max}
This Appendix exhibits improved versions of the maximum support kernel and image algorithms described in Section~\ref{sec:max support}. 
The key idea to the amortized analyses is bounding the possible
increase of the volume of the ellipsoidal approximation when moving to
a lower dimensional subspace. The following lemma will be useful in computing the projected
determinant. 
\begin{lemma}\label{lemma:determinant and hyperplane}
  Consider a matrix $R\in \bb{S}^d_{++}$. For a vector $a\in \R^d$, $\|a\|=1$, let $H=\{x\st a^\top x=0\}$. Then
$\displaystyle\det_H(R)=\det(R) \|a\|_{R^{-1}}^2$.
\end{lemma}
\begin{proof}
Let $W\in \R^{d\times(d-1)}$ be a matrix whose columns form an orthonormal basis of $H$.
Since $(W|a)$ is an orthonormal basis of $\R^d$, we have
\begin{eqnarray*}
\det(R)&=&\det\left(\left(\begin{array}{c}W^\top \\a^\top \end{array}\right)R(W|a)\right)=\det\left(\begin{array}{cc}W^\top RW&W^\top Ra\\a^\top RW&\|a\|_R^2\end{array}\right)\\
&=&\det(W^\top RW) (\|a\|_R^2-a^\top RW(W^\top RW)^{-1} W^\top Ra),
\end{eqnarray*}
where the last equality follows from the determinant identity for the Schur's complement. Observe that $\|a\|_R^2-a^\top RW(W^\top RW)^{-1}W^\top Ra=\|q\|^2$, where  $q$ is the orthogonal projection of the vector $v:=R^{\frac{1}{2}} a$ onto the orthogonal complement of the hyperplane $R^{\frac{1}{2}}H$. The orthogonal complement of this hyperplane is the line generated by $p:=R^{-\frac{1}{2}}a$. Thus  $\|q\|=\hat p^\T v=(a^\T R^{-\frac{1}{2}}R^{\frac{1}{2}} a)/\|R^{-\frac{1}{2}}a\|=1/\|a\|_{R^{-1}}$ since $\|a\|=1$. Therefore
$\det_H(R)=\det(W^\top RW)=\det(R)\|a\|_{R^{-1}}^2$, as required.
\qued \end{proof}

\begin{lemma}\label{lemma:ellipsoid and hyperplane}
Let $E\subset \R^d$ be an ellipsoid and $H$ an $r$-dimensional subspace of $\R^d$. Given $a\in H$, $\|a\|=1$, let $H'=\{x\in H\st a^\top x=0\}$.
Then \[\vol_{r-1}(E_{H'})=\frac{\vol_r(E_H)}{\width_E(a)}\cdot
\frac{\nu_{r-1}}{\nu_r}.\]
\end{lemma}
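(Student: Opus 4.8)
The plan is to reduce the statement to the classical volume formula for a hyperplane section of a full‑dimensional, origin‑centered ellipsoid, and then read it off from the relations among projected determinants, widths and volumes already recorded in \eqref{formula:ellipsoid-volume}, Lemma~\ref{lemma:ellipsoid width} and Lemma~\ref{lemma:determinant and hyperplane}.

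First I would carry out two harmless reductions. Since $H'\subseteq H$, we have $E_{H'}=E\cap H'=(E\cap H)\cap H'=E_H\cap H'$, so the left‑hand side depends only on $E_H$; hence we may replace $E$ by $E_H$ and assume $E\subseteq H$ (in particular $\width_E(a)=\max\{a^\top z\st z\in E\}$ is now measured inside $H$). We may also assume $E$ is $r$‑dimensional, since otherwise both sides vanish. Next, fixing an orthonormal basis $W\in\R^{d\times r}$ of $H$ identifies $H$ with $\R^r$; under this isometry $E$ becomes a full‑dimensional centered ellipsoid $E(S)\subseteq\R^r$ with $S\in\bb{S}^r_{++}$, the unit vector $a$ becomes the unit vector $b=W^\top a\in\R^r$ (a unit vector because $a\in H$), and $H'$ becomes the hyperplane $b^{\perp}=\{z\in\R^r\st b^\top z=0\}$. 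Since $W$ has orthonormal columns, $r$‑ and $(r-1)$‑dimensional volumes and the width along $a$ are preserved, so it suffices to prove
\[
\vol_{r-1}\big(E(S)\cap b^{\perp}\big)=\frac{\vol_r(E(S))}{\width_{E(S)}(b)}\cdot\frac{\nu_{r-1}}{\nu_r}.
\]

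For this reduced identity, \eqref{formula:ellipsoid-volume} gives $\vol_r(E(S))=\nu_r/\sqrt{\det(S)}$ and $\vol_{r-1}(E(S)\cap b^{\perp})=\vol_{r-1}(E_{b^{\perp}}(S))=\nu_{r-1}/\sqrt{\det_{b^{\perp}}(S)}$; Lemma~\ref{lemma:determinant and hyperplane} applied in $\R^r$ (using $\|b\|=1$) gives $\det_{b^{\perp}}(S)=\det(S)\,\|b\|_{S^{-1}}^2$; and Lemma~\ref{lemma:ellipsoid width} gives $\width_{E(S)}(b)=\|b\|_{S^{-1}}$. Substituting these three facts yields $\vol_{r-1}(E(S)\cap b^{\perp})=\nu_{r-1}/(\sqrt{\det(S)}\,\|b\|_{S^{-1}})=(\nu_{r-1}/\nu_r)\cdot\vol_r(E(S))/\width_{E(S)}(b)$, which is exactly the claim.

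The only real care needed is in the reduction — checking that passing to $E_H$ and then to coordinates on $H$ leaves $\vol_r(E_H)$, $\vol_{r-1}(E_{H'})$ and $\width_E(a)$ unchanged — and in invoking Lemma~\ref{lemma:determinant and hyperplane}, which is stated for a codimension‑one hyperplane of the ambient space, inside the reduced space $\R^r$. If one prefers not to rely on Lemma~\ref{lemma:determinant and hyperplane}, the reduced identity can instead be obtained directly by Fubini: slicing $E(S)$ by the parallel hyperplanes $\{b^\top z=\lambda\}$ for $\lambda\in[-w,w]$ with $w=\width_{E(S)}(b)$, each slice is a translate of $\sqrt{1-\lambda^2/w^2}\,(E(S)\cap b^{\perp})$, so $\vol_r(E(S))=\vol_{r-1}(E(S)\cap b^{\perp})\cdot w\int_{-1}^{1}(1-s^2)^{(r-1)/2}\,ds$; applying the same slicing to the unit ball $\B^r$ gives $\int_{-1}^{1}(1-s^2)^{(r-1)/2}\,ds=\nu_r/\nu_{r-1}$, and the claim follows.
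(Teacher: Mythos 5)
Your main argument is exactly the paper's proof: reduce to $E=E_H$ identified with a full-dimensional centered ellipsoid $E(S)$ in $\R^r$, then combine \eqref{formula:ellipsoid-volume}, Lemma~\ref{lemma:determinant and hyperplane}, and Lemma~\ref{lemma:ellipsoid width} to get $\vol_{r-1}(E_{H'})=\nu_{r-1}/(\sqrt{\det(S)}\,\|b\|_{S^{-1}})$. The paper compresses the reduction into ``we can assume $H=\R^r$'' while you spell it out (and add an optional Fubini slicing alternative), but the substance is the same and correct.
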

\begin{proof}
We can assume $H=\R^r$, so that $E=E_H$. Let $R\in\bb{S}_{++}^r$ such that $E=E(R)$. The volume of $E$ can be written as
$\vol_{r}(E)=\nu_{r}/\sqrt{\det(R)}$, and using
\eqref{formula:ellipsoid-volume}, we get $\vol_{r-1}(E_{H'})=\nu_{r-1}/\sqrt{\det_{H'}(R)}$. The statement  follows from
Lemmas~\ref{lemma:determinant and hyperplane} and \ref{lemma:ellipsoid width}.
\qued \end{proof}

\subsection{Amortized Maximum Support Kernel Algorithm}\label{sec:app-kernel}

To describe the algorithm, it is more convenient to work with the scalar products defined by $Q$ instead of the rescaled matrix $A'=MA$. Consider a vector $y'=A'x=MAx$ in any iteration of Algorithm~\ref{alg:primal-algorithm-matrix}, and let $y=A x$. Note that $y'=My$, so when computing $\hat a_j'^\T \hat y'$
we have $\hat a_j'^\T \hat y'=\scalar{\frac{a_j}{\|a_j\|_Q},\frac{y}{\|y\|_Q}}_Q$.
Rescaling in Algorithm~\ref{alg:primal-algorithm-matrix} replaces $A'$ by $(I_m+\hat y'\hat y'^\top)A'$,
therefore the corresponding update of the scalar product consists of replacing  $M$ by $(I_m+\hat y'\hat y'^\top)M$ and recomputing $Q$.
Noting that $\|y'\|_2=\|M y\|_2=(1+3\varepsilon)^{t}\| y\|_Q$, the update can be written in terms of $Q$ and $y$, as
\begin{equation}\label{eq:new-Q-kernel}
Q'=\frac{1}{(1+3\varepsilon)^{2(t+1)}}M^\T\Big({I_m+\frac{y' y'^\T}{\|y'\|_2^2}}\Big)\Big({I_m+\frac{y' y'^\T}{\|y'\|_2^2}}\Big)M=\frac{1}{(1+3\varepsilon)^2}\Big(Q+\frac{3Qyy^\T Q}{\| y\|_Q^2}\Big).\end{equation}
We define the procedure \Call{Rescale}{$Q,y$} which, given $Q$ and $y$, replaces $Q$ with the matrix $Q'$ defined in \eqref{eq:new-Q-kernel}.
\bigskip

In this section we show that we can improve the running time estimate of the Basic Maximum Support Kernel Algorithm (Section~\ref{sec:kernel-max}) by a factor $n$ by adopting two ideas.
\begin{itemize}
\item[(a)] Instead of removing a column $a_k$, $k\in T^*$ from $A$ every time we identify one, we maintain as before a set $S\subseteq[n]$ with the property that $S^*\subseteq S$, as well as a set $T\subseteq S$ of indices that we have determined not to belong to $S^*$ (that is, $T\cap S^*=\emptyset$ throughout the algorithm). Whenever we conclude that $i\notin S^*$ for an index $i$ based on Lemma~\ref{lem:contained-in-ellipsoid}(ii), we add $i$ to $T$. Columns indexed by $T$ are removed from $S$ only when doing so decreases the rank of the matrix $A_S$.
\item[(b)] After removing  columns from $A$, instead of restarting from $Q=I_m$ we restart from the same $Q$ we had at the last iteration. If in a given iteration we remove columns from $S$, thus obtaining a set $S'\subsetneq S$, it may happen that the relative volume of $E(Q)\cap\im(A_{S'})$ is larger than the relative volume of $E(Q)\cap\im(A_S)$, but Lemma~\ref{lemma:volume after removal} ensures that the increase in volume is not too large.
\end{itemize}

\renewcommand{\algorithmicrequire}{\textbf{Input:}}
\renewcommand{\algorithmicensure}{\textbf{Output:}}
\begin{figure}[htb]
\begin{center}
\begin{minipage}{0.85\textwidth}
\begin{algorithm}[H]
\raggedright
  \begin{algorithmic}[1]
    \Require{A matrix $A\in\Z^{m\times n}$.}
    \Ensure{A maximum support solution to the system \eqref{eq:primal}.}
\State Compute $\bart:=\bart_A$ as in \eqref{def:delta}.
    \State Compute $\Pi:=\Pi_{\hat A}^K$.
    \State Set $x_j:=1$ for all $j\in [n]$, and $y:=\hat Ax$.
    \State Set $S:=[n]$, $T:=\emptyset$, $Q:=I_m$.
    \While{($S\neq \emptyset$) and ($\Pi x\not> 0$)}
%     \If{$\scalar{a_i,y}_Q\geq 0$ for all $i\in S$}
%     \State $T:=T\cup\{i\in S\st \scalar{a_i,y}_Q>0 \}$,    \Call{Remove}{$T$} ;\label{li:remove strictly positive}
%      \Else
  	\State Let $\displaystyle k:=\arg\min_{i\in S} \scalar{a_i,y}_Q/\|a_i\|_Q$;
       \If{$\scalar{a_k,y}_Q< -\varepsilon \|a_k\|_Q \|y\|_Q$}
          \State {\bf update}
          $\displaystyle x:=x-\frac{\scalar{a_k,y}_Q}{\|a_k\|_Q^2}\vec{e}_k$;\quad
           $\displaystyle y:=y-\frac{\scalar{a_k,y}_Q}{\|a_k\|_Q^2}a_k$;
         \Else~\Call{Rescale}{$Q,y$};

\State $T:=T\cup \{k\in S\sm T: \|\hat a_k\|_Q>\bart^{-1} \}$,\label{li:k too long}
\If{$\rk(A_{S\sm T})<\rk(A_S)$} \Call{Remove}{$T$};\label{li:rank}\EndIf
	\EndIf
%	\EndIf
    \EndWhile
 \If{$\Pi x> 0$}
\State Define $\bar x_i\in \R^n$ by $\bar x_i:=(\Pi x)_i/\|a_i\|_2$ if $i\in S$, $\bar x_i:=0$ if $i\notin S$.
 \Return{$\bar x$.}
    \EndIf
 \If{$S=\emptyset$}
\Return{$\bar x=0$.}
\EndIf
 \end{algorithmic}
\caption{Maximum Support Kernel Algorithm}\label{alg:primal-algorithm-norm}
\end{algorithm}

\begin{algorithm}[H]
\raggedright
  \begin{algorithmic}[1]\label{alg:remove}
\Procedure{Remove}{$T$}
    \State $S:=S\setminus T$; $T:=\emptyset$.
       \State Reset $x_j:=1$ for all $j\in S$;\quad $y:=\hat A_S x$.
       \State Recompute $\Pi:=\Pi_{\hat A_S}^K$;
\EndProcedure
 \end{algorithmic}
\caption{Column deletion}
\end{algorithm}

\end{minipage}
\end{center}
\end{figure}

Algorithm~\ref{alg:primal-algorithm-norm} terminates either with a solution $\bar x\in\ker(A)_+$ with $\supp(\bar x)=S$, in which case we may conclude $S=S^*$, or when $S=\emptyset$ is reached, in which case we may conclude that $\bar x=0$ is a maximum support solution.

\begin{theorem}\label{thm:complexity primal max support} Let $A\in\Z^{m\times n}$. Algorithm~\ref{alg:primal-algorithm-norm}  finds a solution of $Ax=0$, $x\geq 0$ of maximum support in $O\left((m^3n+mn^2)L\right)$ arithmetic operations.
\end{theorem}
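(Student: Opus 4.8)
The plan is to transport the analysis of Theorem~\ref{thm:main-matrix} into the $Q$-inner-product formulation used by Algorithm~\ref{alg:primal-algorithm-norm}, establish correctness through a few invariants, and then bound the running time by an amortized volume argument over the (at most $m$) calls to \textsc{Remove}.

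\textbf{Correctness.} I would maintain three invariants: (1) $S^*_A \subseteq S$; (2) $T \cap S^*_A = \emptyset$; (3) $P_{\hat A_S} \subseteq E(Q)$. Invariant (3) holds at initialization ($Q=I_m$, $P_{\hat A_S}\subseteq\B^m=E(I_m)$); it is preserved by $\textsc{Rescale}(Q,y)$ since, by the derivation of \eqref{eq:new-Q-kernel}, this step implements exactly the rescaling of Algorithm~\ref{alg:primal-algorithm-matrix}, so Lemma~\ref{lem:contained-in-ellipsoid}(i) (equivalently Lemma~\ref{lem:volume-increase}(i)) applies; and it is preserved by \textsc{Remove} because shrinking $S$ shrinks $P_{\hat A_S}$ while $Q$ is deliberately not reset. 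For (2): an index $k$ joins $T$ only when $\|\hat a_k\|_Q>\bart^{-1}$ after a rescaling, whereas $\supp(\ker(\hat A_S)_+)=S^*_A$ (as $S^*_A\subseteq S$) and, by Lemma~\ref{lem:contained-in-ellipsoid}(ii) applied to $\hat A_S$ together with invariant (3), every $i\in S^*_A$ has $\|\hat a_i\|_Q\le|\rho_{A_{S^*}}|^{-1}\le\bart^{-1}$ (using scale-invariance of $\rho$ and Lemma~\ref{lem:numerical-shit}); hence $k\notin S^*_A$. Invariant (1) then follows since \textsc{Remove} only deletes indices of $T$. Finally, the DV-step/termination logic is literally that of Algorithm~\ref{alg:primal-algorithm-matrix} rewritten in $\scalar{\cdot,\cdot}_Q$; by Lemmas~\ref{lemma:rescaling stretch} and~\ref{lem:delta-rounding}, using $|\rho_{(A_S,-A_S)}|\ge\bart$, once $\|\hat A_S x\|_2<\bart$ we get $\Pi x=\Pi^K_{\hat A_S}x>0$, so the loop terminates. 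If it exits with $\Pi x>0$, then $x$ certifies $S\subseteq\supp(\ker(A)_+)=S^*_A$, hence $S=S^*_A$ and $\bar x$ has support exactly $S^*_A$; if it exits with $S=\emptyset$, then $S^*_A=\emptyset$ and $\bar x=0$ is a maximum support solution.

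\textbf{Number of rescalings.} Partition the run into \emph{phases} delimited by the calls to \textsc{Remove}; since each such call strictly decreases $\rk(A_S)\le m$, there are at most $m$ phases. Within a phase with $S$, $H=\im(\hat A_S)$, $r=\rk(A_S)$ fixed, Lemma~\ref{lem:contained-in-ellipsoid}(iv) shows each rescaling shrinks $\vol_r(E_H(Q))$ by a factor at least $2/3$, while part (iii), together with $|\rho_{(A_S,-A_S)}|\ge\bart\ge2^{-4L}$ and $\max_{i\in S}\|\hat a_i\|_Q=O(\bart^{-1})$ (longer columns are moved to $T$), gives $\vol_r(E_H(Q))\ge\bart^{O(r)}\nu_r$, i.e.\ $\log$-volume $\ge-O(mL)$. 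Telescoping the decreases of $\log\vol_{r}(E_{\im(\hat A_S)}(Q))$ across all phases, the total number of rescalings is bounded by (initial $\log$-volume) $-$ (final $\log$-volume) $+\sum_{\text{\textsc{Remove}'s}}(\text{jump in }\log\text{-volume})$; the first two terms contribute $O(mL)$, and the jumps are controlled by Lemma~\ref{lemma:volume after removal}, which bounds the increase of the relative volume $\vol_r(E_H(Q))/\nu_r$ when $H$ drops to a lower-dimensional $H'$ by deleting columns of $Q$-length exceeding $\bart^{-1}$; summed over the at most $m$ \textsc{Remove}'s (total rank drop at most $m$) this also contributes $O(mL)$. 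Hence there are $O(mL)$ rescalings in total.

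\textbf{DV updates and per-operation cost.} With $t$ the current number of rescalings, Lemma~\ref{lemma:rescaling stretch} gives $\|\hat A_S x\|_2\le(1+3\varepsilon)^t\|y\|_Q$, and the quantity $(1+3\varepsilon)^t\|y\|_Q$ is multiplied by $\sqrt{1-\varepsilon^2}$ at each DV update (by the transported \eqref{eq:decrease in norm}) and by $2$ at each rescaling; at the start of each phase $x=\vec e$ on $S$, so this quantity is at most $n\,(1+3\varepsilon)^t\max_{i\in S}\|\hat a_i\|_Q=n\,2^{O(L)}$ (using $t=O(mL)$ and $\varepsilon=\const$). Since a phase ends once $(1+3\varepsilon)^t\|y\|_Q<\bart=2^{-O(L)}$, exactly as in the proof of Theorem~\ref{thm:main-matrix} the number of DV updates in a phase is $O(m^2(\log n+L+K_{\mathrm{phase}}))$ with $K_{\mathrm{phase}}$ its number of rescalings; summing over the $\le m$ phases and using $\log n=O(L)$, $\sum K_{\mathrm{phase}}=O(mL)$, we get $O(m^3L)$ DV updates overall. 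Maintaining $z=\hat A_S^\T Q y$, the lengths $\|\hat a_i\|_Q^2$, $\Pi x$, and the matrix $F=\hat A_S^\T Q\hat A_S$ makes each DV update cost $O(n)$; since $\textsc{Rescale}$ updates $Q$ by a rank-one change, $F$ is updated in $O(n^2)$ (plus $O(m^2)$ for $Q$ and the threshold check) per rescaling; and each of the $\le m$ calls to \textsc{Remove} recomputes $\Pi^K_{\hat A_S}$ and $F$ in $O(mn^2)$, for $O(m^2n^2)=O(m^3nL)$ total using $n=O(L)$. Altogether the cost is $O(m^3L)\cdot O(n)+O(mL)\cdot O(n^2)+O((m^3n+mn^2)L)=O((m^3n+mn^2)L)$.

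\textbf{Main obstacle.} The delicate point is the amortized volume accounting in the second step: when a \textsc{Remove} drops the ambient subspace $H$ to $H'\subsetneq H$, the \emph{relative} volume of the ellipsoid section $E_{H'}(Q)$ may exceed that of $E_H(Q)$, and one must show, via the projected-determinant identities of Lemmas~\ref{lemma:determinant and hyperplane}--\ref{lemma:ellipsoid and hyperplane} and the fact that only columns with $\|\hat a_k\|_Q>\bart^{-1}$ are deleted, that these increases sum to only $O(mL)$ over the whole run (Lemma~\ref{lemma:volume after removal}); this is precisely what lets the amortized bound match the full-support running time instead of losing the factor $n$ of the basic algorithm.
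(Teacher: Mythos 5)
Your proposal is correct and follows essentially the same route as the paper's proof: the same correctness invariants ($S^*\subseteq S$, $T\cap S^*=\emptyset$, containment of $P_{\hat A_S}$ in $E(Q)$), the same decomposition into rounds between \textsc{Remove} calls, the same amortized volume telescoping via Lemma~\ref{lemma:volume after removal} to get $O(mL)$ rescalings, and the same DV-update count and per-step costs. The only (harmless) bookkeeping detail you omit is the cost of the rank test at line~\ref{li:rank} each time a column enters $T$ ($O(m^2n)$ per test, at most $n$ times, hence $O(m^2nL)$, within the stated bound).
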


The proof of  Theorem~\ref{thm:complexity primal max support} requires
the following lemma that gives a bound on the  volume increase of the
relevant ellipsoid at a column removal step.

\begin{lemma}\label{lemma:volume after removal} Consider a stage of
Algorithm~\ref{alg:primal-algorithm-norm} in which \Call{Remove}{$T$} is called.
Let $r=\rk(A_S)$, $r'=\rk(A_{S\sm T})$, and let $E:=E_{\im(A_S)}(Q)$, and
$E':=E_{\im(A_{S\sm T})}(Q)$, where $S,T$ are as in line~\ref{li:rank}. Then
\[\frac{\vol_{r'}(E')}{\vol_{r}(E)}\leq \frac{\nu_{r'}}{\nu_{r}}\left(\frac{2}{\bart^2}\right)^{r-r'} .\]
\end{lemma}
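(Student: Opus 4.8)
The plan is to pass from the $r$-dimensional ellipsoid section $E = E_{\im(A_S)}(Q)$ to the $r'$-dimensional section $E' = E_{\im(A_{S\sm T})}(Q)$ one dimension at a time, applying Lemma~\ref{lemma:ellipsoid and hyperplane} at each step and controlling the width factor that appears. Choose a flag of subspaces $\im(A_{S\sm T}) = H_{r'} \subsetneq H_{r'+1} \subsetneq \cdots \subsetneq H_r = \im(A_S)$, where each $H_{j}$ is obtained from $H_{j+1}$ by intersecting with a hyperplane orthogonal (inside $H_{j+1}$) to a unit vector $a^{(j)}$. Concretely, since $r - r'$ columns of $A_S$ lying in $T$ must be dropped to reduce the rank, one can pick unit vectors $a^{(j)}$ that are suitable normalized combinations of columns indexed by $T$; the key point is only that each $a^{(j)}$ can be taken to lie in $\im(A_S)$ and, more importantly, to have controlled $E(Q)$-width.

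**Key steps.** First I would apply Lemma~\ref{lemma:ellipsoid and hyperplane} iteratively: writing $E_j := E_{H_j}(Q)$, one gets
\[
\frac{\vol_{r'}(E')}{\vol_r(E)} = \prod_{j=r'+1}^{r} \frac{\vol_{j-1}(E_{j-1})}{\vol_j(E_j)} = \prod_{j=r'+1}^{r} \frac{1}{\width_{E_j}(a^{(j)})}\cdot\frac{\nu_{j-1}}{\nu_j} = \frac{\nu_{r'}}{\nu_r}\prod_{j=r'+1}^{r}\frac{1}{\width_{E_j}(a^{(j)})}.
\]
Second, I would bound each width from below by $\bart^2/2$. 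By Lemma~\ref{lemma:ellipsoid width}, $\width_{E_j}(a^{(j)}) = \|a^{(j)}\|_{(R_j)^{-1}}$ where $R_j$ is the matrix representing $Q$ restricted to $H_j$; the content here is that a unit vector built from the removed columns cannot be too short in this dual norm. I would invoke part (iii) of Lemma~\ref{lem:contained-in-ellipsoid}: as long as no column has yet been flagged for removal (which is exactly the regime just before \Call{Remove}{$T$} is triggered), we have $(\mu^{-1}\cdot|\rho_{(A_S,-A_S)}|)\bb{B}^m \cap \im(A_S) \subseteq E_{\im(A_S)}(Q)$, and combining $|\rho_{(A_S,-A_S)}|\ge\bart$ with the bound $\mu = \max_i \|a_i\|_Q \le \bart^{-1}$ gives that $E$ contains a ball of radius $\bart^2$ inside $\im(A_S)$. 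This inclusion persists for every intermediate section $E_j$ (intersecting with a subspace through the origin only shrinks the inscribed ball's ambient radius, not below $\bart^2$), so $\width_{E_j}(a) \ge \bart^2$ for any unit $a \in H_j$, in particular $\width_{E_j}(a^{(j)}) \ge \bart^2 \ge \bart^2/2$. Multiplying the $r-r'$ factors gives $\prod 1/\width_{E_j}(a^{(j)}) \le (1/\bart^2)^{r-r'} \le (2/\bart^2)^{r-r'}$, which is the claimed bound.

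**Main obstacle.** The delicate point is the choice of the flag $H_{r'} \subset \cdots \subset H_r$ and the unit vectors $a^{(j)}$: Lemma~\ref{lemma:ellipsoid and hyperplane} requires at stage $j$ a unit vector $a^{(j)} \in H_j$ with $H_{j-1} = \{x \in H_j : (a^{(j)})^\T x = 0\}$, so the $a^{(j)}$ must be orthonormalized relative to the nested subspaces rather than being the raw columns. One must check that such a flag exists with $H_{r'} = \im(A_{S\sm T})$ and $H_r = \im(A_S)$ — this is just linear algebra (extend an orthonormal basis of $\im(A_{S\sm T})$ to one of $\im(A_S)$) — and then verify that the width lower bound $\bart^2$ applies to each $a^{(j)}$, which follows since each $a^{(j)}$ is a unit vector in $H_j \subseteq \im(A_S)$ and the inscribed-ball argument above is uniform over such vectors. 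So the width bound does not actually depend on $a^{(j)}$ being related to the removed columns at all; any unit vector in the section works, which is what makes the estimate clean. The factor $2$ in $2/\bart^2$ is slack we can afford, presumably left for uniformity with how the bound is consumed in the proof of Theorem~\ref{thm:complexity primal max support}.
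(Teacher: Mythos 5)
Your overall strategy -- build a flag of subspaces from $\im(A_{S\sm T})$ up to $\im(A_S)$, apply Lemma~\ref{lemma:ellipsoid and hyperplane} once per dimension, and lower-bound each width -- is exactly the paper's. The gap is in the width bound. You claim $\mu=\max_{i}\|\hat a_i\|_Q\le\bart^{-1}$ at the moment \textproc{Remove} is called, so that Lemma~\ref{lem:contained-in-ellipsoid}(iii) gives a ball of radius $\bart^2$ inside $E$. But this is false precisely in the situation the lemma addresses: the columns indexed by $T$ are still in $S$ at that moment, they satisfy $\|\hat a_k\|_Q>\bart^{-1}$ by the very definition of line~\ref{li:k too long}, and those added to $T$ in earlier iterations may have had their $Q$-norms doubled many times since, so $\mu$ can exceed $\bart^{-1}$ by an arbitrarily large factor. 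You cannot simply discard those columns either: they are exactly the ones needed to span the $r-r'$ directions of $\im(A_S)$ that disappear in $\im(A_{S\sm T})$, so restricting to the short columns of $S\sm T$ only yields a ball in the lower-dimensional subspace, which gives no control on $\width_{E_j}(a^{(j)})$ for $a^{(j)}\notin\im(A_{S\sm T})$. Your parenthetical ``as long as no column has yet been flagged for removal'' describes a regime that never coincides with a call to \textproc{Remove}.

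The paper closes this gap with a one-iteration look-back: let $T'$ be the state of $T$ \emph{before} the update at line~\ref{li:k too long}. Since \textproc{Remove} was not called in the previous iteration, $\rk(A_{S\sm T'})=\rk(A_S)=r$, so the columns of $S\sm T'$ still span $\im(A_S)$; and each such column had $\|\hat a_i\|_Q\le\bart^{-1}$ in the previous iteration (else it would already be in $T'$), hence at most $2\bart^{-1}$ after the current rescaling, since a rescaling at most doubles $Q$-norms. Running your inscribed-ball (or the paper's direct Goffin-measure) argument with these columns gives a ball of radius $\bart^2/2$, not $\bart^2$, and this is where the factor $2$ in $\left(2/\bart^2\right)^{r-r'}$ comes from -- it is not slack. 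The paper also takes the flag through column subsets $S_{r'}\subset\cdots\subset S_r=S\sm T'$ rather than an arbitrary orthonormal extension, but that choice is cosmetic once the correct width bound is in place.
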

\begin{proof}
Let $T'$ denote the state of $T$ in the previous iteration, i.e.~before the update
at line~\ref{li:k too long}. Since \textproc{Remove} was not called in the
previous iteration, we have that $r = \rk(A_S) = \rk(A_{S \setminus T'}) >
\rk(A_{S \setminus T}) = r'$. Since rank can only decrease by one after the
removal of a column, we can construct a sequence of sets $S \setminus T :=
S_{r'} \subset S_{r'+1} \subset \cdots \subset S_r := S \setminus T'$ such that
$\rk(A_{S_k}) = k$ for $k \in \set{r',\dots,r}$. To prove the desired statement,
it suffices to show that for $k \in \set{r'+1,\dots,r}$,
\[
\frac{\vol_{k-1}(E_{k-1})}{\vol_{k}(E_{k})}\leq
\frac{\nu_{k-1}}{\nu_{k}} \left(\frac{2}{\bart^2}\right)~,
\]
where $E_l := E_{\im(A_{S_l})}(Q)$, $l \in \set{r',\dots,r}$. This follows by
induction, recalling that $E_{r'} = E'$ and $E = E_r$ (since
$\im(A_{S_r})=\im(A_{S \sm T'}) = \im(A_S)$).  

Let $k \in \set{r'+1,\dots,r}$, $H_{k-1} :=
\im(A_{S_{k-1}})$, $H_k := \im(A_{S_k})$. Let $\nu \in H_k$ be the vector
orthogonal to $H_{k-1}$ such that $\|\nu\|_2=1$. By Lemma~\ref{lemma:ellipsoid and hyperplane},
\[
\vol_{k-1}(E_{k-1})=\frac{\vol_{k}(E_k)}{\width_{E_k}(\nu)}\cdot\frac{\nu_{k-1}}{\nu_{k}},
\]
and hence it suffices to show that $\width_{E_k}(a) \geq \frac{\bart^2}{2}$.

Since at every rescaling the $Q$-norm of the columns of $\hat{A}_{S \setminus
T'}$ increases by at most a factor $2$ and since in the previous iteration they
had $Q$-norm at most $\theta^{-1}$ (otherwise they would have been added to
$T'$), their $Q$-norm during the current iteration is at most $2\theta^{-1}$. In
particular, since $S_k \subseteq S \setminus T'$, we have $\|\hat{a}_i\|_Q \leq
2\theta^{-1} ~ \forall i \in S_k$. From here, we have that
%  Since at every rescaling the $Q$-norm of the columns of $\hat A_S$ increases by less than a factor $2$, the columns in $T\sm {S'} $ had $Q$-norm greater than $\bart^{-1}$ before the last rescaling. Since we did not remove the columns in $T\sm {S'} $ at the last rescaling, it follows that $\rk(A_{S'} )=r$, so $\im(A_{S'} )=H$. In particular $a\in \im(A_{S'} )$, therefore we have
\begin{eqnarray*}
\width_{E_k}(\nu)&=&\max\{\nu^\T   z \st \|z\|_Q \leq 1,\, z\in H_k\}\\
&\geq & \max_{i \in S_k}\frac{|\nu^\T  \hat a_i|}{\|\hat a_i\|_Q}\geq
\frac{\bart}{2 }\min_{y \in H_k \sm \set{0}} \max_{i \in {S_k}}{|\hat y^\T  \hat a_i|}\\
&= & \frac{\bart}{2}|\rho_{(A_{S'} ,-A_{S'} )}|\geq \frac{\bart^2}{2},
\end{eqnarray*}
where the last inequality follows from  $|\rho_{(A_{S'} ,-A_{S'} )}|\geq \bart$.
\qued \end{proof}

\begin{proof}[Proof of Theorem~\ref{thm:complexity primal max support}]

If the algorithm terminates with $\Pi x>0$, then it correctly outputs a solution
to (\ref{eq:main problem}). Next we observe that, throughout the algorithm,
$S\supseteq S^*$, which implies that the solution returned at the end is always
a maximum-support solution. To prove this we only need to show that $T\subseteq
T^*$ throughout. New elements are added to $T$ in step \ref{li:k too long},
% In the former case, the vector $z=AQy$ is an element of $\im(A^\T)_+$, and the
% elements added to $T$ are $\{k\in S\st z_k>0\}$, which are in $T^*$ by
% definition. 
which are in $T^*$ by Lemma~\ref{lem:contained-in-ellipsoid} and the fact that
$\rho_{A_{S^*}}\geq \bart_A$ if $S\neq \emptyset$.

%Observe also that by construction, $\rk(A_S)$ decreases at every call of \textsc{Remove}. 
% This is true by construction when we call \textsc{Remove} in step \ref{li:rank}. Suppose \textsc{Remove} is called in step \ref{li:remove strictly positive}. Here we determined $x$ such that $0\neq y=A_S x$ and $A_S^\T Qy\geq 0$. Since $0<\|y\|_Q^2=x^\T A_S^\T Q y$, it follows that $A_S^\top Qy\neq 0$. This shows that there exists $k\in S$ such that $\scalar{a_k,y}_Q>0$, so in particular $\rk(A_{S'})<\rk(A_S)$ where $S':=\{j\in S\st \scalar{a_j,y}_Q=0\}$.
 
We need to argue that the algorithm terminates in the claimed number of iterations.  Recall that, by Lemma~\ref{lem:X-S} we have $P_A=P_{A_S}=P_{A_{S^*}}$ throughout the algorithm, since $S^*\subseteq S$.

A {\em round} of the algorithm consists of the iterations that take place between two consecutive calls of \textproc{Remove}. Since \Call{Remove}{$T$} is called only when $\rk(A_{S\sm T})<\rk(A_S)$, the number of rounds is at most $\rk(A)\leq m$.
We want to bound the total number of rescalings performed by the algorithm.

\begin{Claim}\label{cl:K-bound}
The total number $K$ of rescalings throughout the algorithm is $ O(m\log(\bart^{-1}))$.
\end{Claim}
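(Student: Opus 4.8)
The plan is to track the relative volume of the ellipsoid $E_{\im(A_S)}(Q)$ across the whole run of the algorithm, separating the effect of rescalings (which shrink it by a constant factor) from the effect of \textproc{Remove} calls (which may grow it, but in a controlled way). First I would fix notation: at any point of the algorithm let $H := \im(A_S)$, let $r := \rk(A_S)$, and consider the potential $\Phi := \vol_r(E_H(Q))/\nu_r$, i.e.\ the relative volume of $E_H(Q)$ normalized by the volume of the $r$-dimensional unit ball. At initialization $S=[n]$, $Q=I_m$, so $E_H(Q)$ is the unit ball in $H=\im(A)$ and $\Phi = 1$.

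Next I would establish the two update rules for $\Phi$. For a rescaling step: this is exactly Lemma~\ref{lem:contained-in-ellipsoid}(iv) (with $A_S$ in place of $A$), which gives that each rescaling decreases $\vol_r(E_H(Q))$, hence $\Phi$, by a factor of at least $3/2$ (i.e.\ multiplies it by at most $2/3$); note $r = \rk(A_S)\le m$ is what is needed for the bound $(1+3\varepsilon)^r \le 4/3$. For a \textproc{Remove}$(T)$ step going from $S$ to $S\sm T$, with $r = \rk(A_S)$ and $r' = \rk(A_{S\sm T})$: Lemma~\ref{lemma:volume after removal} gives
\[
\frac{\vol_{r'}(E_{\im(A_{S\sm T})}(Q))}{\vol_r(E_{\im(A_S)}(Q))} \le \frac{\nu_{r'}}{\nu_r}\left(\frac{2}{\bart^2}\right)^{r-r'},
\]
so the normalized potential $\Phi$ increases by a factor of at most $(2/\bart^2)^{r-r'}$ at such a step. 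Crucially, the total of $r-r'$ over all \textproc{Remove} calls telescopes: each call strictly decreases $\rk(A_S)$, and the rank starts at $\rk(A)\le m$ and stays nonnegative, so $\sum (r-r') \le m$ over the whole algorithm. Hence the cumulative multiplicative blow-up of $\Phi$ due to all \textproc{Remove} calls is at most $(2/\bart^2)^m = 2^{O(mL)}$, since $\bart \ge 2^{-4L}$ by Lemma~\ref{lem:numerical-shit}.

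Finally I would bound $\Phi$ from below to cap the number of rescalings. At any moment of the algorithm, as long as no column has yet been flagged for removal in the current round, Lemma~\ref{lem:contained-in-ellipsoid}(iii) applied to $\hat A_S$ gives that $E_H(Q)$ contains $(\mu^{-1}|\rho_{(A_S,-A_S)}|)\,\bb B^m \cap H$ with $\mu = \max_{i\in S}\|\hat a_i\|_Q \le \bart^{-1}$ (columns with larger $Q$-norm would have been moved to $T$), and $|\rho_{(A_S,-A_S)}|\ge \bart$; thus $E_H(Q) \supseteq \bart^2\,\bb B^m\cap H$, giving $\Phi \ge \bart^{2r} \ge \bart^{2m} = 2^{-O(mL)}$. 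Putting the pieces together: writing $K$ for the total number of rescalings, we have
\[
2^{-O(mL)} \;\le\; \Phi_{\text{final}} \;\le\; \Phi_{\text{init}} \cdot \left(\frac{2}{3}\right)^{K} \cdot \left(\frac{2}{\bart^2}\right)^{m} \;=\; \left(\frac{2}{3}\right)^{K} 2^{O(mL)},
\]
and taking logarithms yields $K = O(mL) = O(m\log(\bart^{-1}))$, as claimed. The main obstacle, and the only genuinely delicate point, is making sure the lower bound $\Phi \ge \bart^{2m}$ is valid at \emph{every} iteration — in particular one must check that the argument of Lemma~\ref{lem:contained-in-ellipsoid}(iii) still applies in the amortized algorithm, where $Q$ is carried over from the previous round rather than reset, and where $T$ may be nonempty but its columns are still in $S$ (so $\im(A_S)$ is unchanged and the columns of $T$ have $Q$-norm at most $2\bart^{-1}$, which only affects constants). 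One should also confirm that the lower bound need only hold up to the last rescaling before termination, since after that the DV-iteration count is governed separately by Lemma~\ref{lem:delta-rounding} as in Theorem~\ref{thm:main-matrix}.
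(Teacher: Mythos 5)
Your overall strategy coincides with the paper's: the same normalized relative volume of $E_{\im(A_S)}(Q)$ as potential, the factor-$2/3$ decrease per rescaling from Lemma~\ref{lem:contained-in-ellipsoid}(iv), the telescoping of $\sum(r-r')\le m$ over Lemma~\ref{lemma:volume after removal}, and the lower bound $\vol_r(E)\ge\nu_r\bart^{2r}$. However, there is one concrete flaw in how you establish that lower bound when $T\neq\emptyset$. You invoke Lemma~\ref{lem:contained-in-ellipsoid}(iii) with $\mu=\max_{i\in S}\|\hat a_i\|_Q$ and assert $\mu\le\bart^{-1}$ because ``columns with larger $Q$-norm would have been moved to $T$'' --- but flagged columns stay in $S$ until \textproc{Remove} is called, and your patch that they have $Q$-norm at most $2\bart^{-1}$ is false: a column enters $T$ with $Q$-norm up to $2\bart^{-1}$, and each subsequent rescaling in the same round can double its $Q$-norm again, so after $k$ further rescalings it can reach $2^{k+1}\bart^{-1}$. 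Hence $\mu$ is not bounded by any fixed multiple of $\bart^{-1}$, and your lower bound on $\Phi$ does not follow as written.

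The repair is exactly what the paper does: restrict attention to the columns indexed by $S\sm T$, all of which satisfy $\|\hat a_j\|_Q\le\bart^{-1}$ by the test in line~\ref{li:k too long}, so that $E\supseteq\bart\conv(\hat A_{S\sm T},-\hat A_{S\sm T})$. Since \textproc{Remove} is triggered precisely when $\rk(A_{S\sm T})<\rk(A_S)$, one has $\rk(A_{S\sm T})=r$ at every rescaling of a round \emph{except possibly the last}, and then Lemma~\ref{lem:rho} gives an $r$-dimensional ball of radius $|\rho_{(A_{S\sm T},-A_{S\sm T})}|\ge\bart$ inside that convex hull, yielding $\vol_r(E)\ge\nu_r\bart^{2r}$. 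The excluded final rescaling of each round contributes at most $m$ extra rescalings overall (one per round), which the paper accounts for by bounding the total by ``$m$ plus'' the $K$ satisfying $(2/3)^K(2/\bart^2)^m<\bart^{2m}$. Two smaller points: your chain $K=O(mL)=O(m\log(\bart^{-1}))$ has the implication in the wrong direction (since $\log\bart^{-1}\le 4L$ but not conversely); working directly with $(3/2)^K\le 2^m\bart^{-4m}$ gives $K=O(m\log\bart^{-1})$ without detouring through $L$. Also, the lower bound must be available at the point where you evaluate $\Phi$ in your final inequality, which the round-by-round accounting above handles cleanly.
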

\begin{proof}
In any given round, let $E:=E_{\im(A_S)}(Q)$ and  $r=\rk(A_S)$. We first show that at every rescaling within the round, except for the last, the invariant
\begin{equation}
\vol_r(E)\geq \nu_r\bart^{2r}\label{eq:volume-lower}
\end{equation}
is maintained.
Indeed, by Lemma~\ref{lem:contained-in-ellipsoid}(i), $P_A\subseteq E$ throughout. Since $\|\hat a_j\|_Q\leq \bart^{-1}$ for all $j\in S\sm T$, it follows that $E\supseteq \bart\conv(\hat A_{S\sm T}, -\hat A_{S\sm T})$. Since at every rescaling except for the last one of the round we have  $\rk(A_{S\sm T})=r$, it follows by Lemma~\ref{lem:rho} that $\conv(\hat A_{S\sm T}, -\hat A_{S\sm T})$ contains an $r$-dimensional ball of radius $|\rho_{(A_{S\sm T}, -A_{S\sm T})}|\geq \bart$. This implies \eqref{eq:volume-lower}.

At the first iteration, $Q=I_m$, $S=\emptyset$, and $E=\bb{B}_m\cap \im( A )$, therefore initially $\vol_{r}(E)\leq \nu_{r}$.
 By Lemma~\ref{lem:contained-in-ellipsoid}(iii), at every rescaling in which we do not remove any column $\vol_r(E)$ decreases by at least 2/3; Lemma~\ref{lemma:volume after removal} bounds the increase in $\vol_r(E)$ at column removals.
Combined with the lower bound \eqref{eq:volume-lower}, we obtain that the total number of  rescalings is at most $m$ plus the smallest number $K$ satisfying
\[\left(\frac23\right)^K \cdot\left(\frac{2}{\bart^2}\right)^m<\bart^{2m}.\]
The claimed bound on $K$ follows.
\qued \end{proof}

By Lemma~\ref{lem:delta-rounding}, the algorithm is guaranteed to terminate when $\|y\|_2<|\rho_{(A_S,-A_S)}|$, so in particular $\|y\|_2\geq \bart$ throughout the algorithm, since $\bart\leq|\rho_{(A_S,-A_S)}|$. By Lemma~\ref{lemma:rescaling stretch}, after $t$ rescalings $\|y\|_2\leq \|y\|_Q(1+3\varepsilon)^t$. Hence the algorithm is guaranteed to terminate if $\|y\|_Q\leq \bart/(1+3\varepsilon)^K$.

At the beginning of each round, we re-initialize $x$ so that $x_j=1$ for all $j\in S$. In particular,  $y=\hat A_S x$ satisfies $\|y\|_Q\leq {|S|}\bart^{-1} \leq {n}\bart^{-1} $, since
$\|\hat a_j\|_Q\leq \bart^{-1} $ for all $j\in S$.  At every rescaling within the same round, $\|y\|_Q$ increases by $2/(1+3\varepsilon)$, and in every DV update, it decreases by at least a factor $\sqrt{1-\varepsilon^2}$. Let $R$ be the number of rounds, and $K_1,\ldots,K_R$ be the number of rescaling within rounds $1,\ldots,R$.

It follows that, at the $i$th round, the number of DV updates is at most the smallest number $\kappa_i$ such that
\[{n}\bart^{-1}(1-\varepsilon^2)^{\kappa_i/2} 2^{K_i}< \bart/(1+3\varepsilon)^K.\]
Taking the logarithm on both sides and recalling that $\log(1-\varepsilon^2)<-\varepsilon^2$ and $\log(1+3\varepsilon)\geq 3\varepsilon$, it follows from our choice of $\varepsilon$ that $\kappa_i\in O(m^2) K_i+O(m)K$. Since $K=K_1+\cdots+K_R$ and $R\leq m$, this implies that the total number of DV updates is $O(m^2) K$. Using Claim~\ref{cl:K-bound}, the total number of DV updates is $O(m^3\log(\bart^{-1}))$. As explained in the proof of Theorem~\ref{thm:main-matrix}, we can perform each DV update in $O(n)$ arithmetic operations, provided that at each rescaling we recompute $F=A_S^\T Q A_S$, which as we showed can be done in $O(n^2)$ arithmetic operations. Observe also that $\|a_j\|^2_Q$ is the $j$th diagonal entry of $F$.  Since $\bart^{-1}\leq 2^{4L}$ by Lemma~\ref{lem:numerical-shit}, the total number of arithmetic operations performed for DV updates and rescalings is within the stated bound.

Every time a new column is added to $T$ at step \ref{li:k too long}, we need to then test at step~\ref{li:rank} is $\rk(A_{S\sm T})<\rk(A_S)$. This can be done in $O(m^2n)$ operations via Gaussian elimination. Since new columns are added to $T$ at most $n$ times, and since $n\leq L$, the total number of arithmetic operations required to test rank is $O(m^2n L)$, which is within the stated running time bound.

Finally, at the beginning of each round we need to recompute the projection matrix. Computing each projection matrix requires time $O(n^2m)$, and the total number of rounds is at most $m$. Since $n\leq L$, the total number of arithmetic operations performed to recompute the projection  matrices is $O(m^2nL)$, which is within the stated bound.
\qued \end{proof}

\subsection{Amortized Maximum Support Image Algorithm}\label{sec:app-image}
Analogously to the kernel setting, we now improve the running time of
the Basic Maximum Support Image Algorithm (Section~\ref{sec:image-max}) by a factor $m$.
The Maximum Support Image Algorithm (Algorithm
~\ref{alg:dual-alg-maxsup}) maintains a set $T$ of
indices with the property $T^*\subseteq T$. The set $T$ is
initialized as $T=[n]$, and we remove an index $a_k$ once we conclude
that $k\notin T^*$. The algorithm terminates with a
solution $\bar y$ such that $a^\T_k \bar y>0$ for all $i\in T$ and
$a^\T_k \bar y=0$
for all $i\notin T$, verifying $T=T^*$ at termination.
We maintain $r$ as the number of rows of $A$ throughout the algorithm. As in the full support case, we assume that initially the matrix has full row rank; this
will be preserved throughout the reduction steps.

\renewcommand{\algorithmicrequire}{\textbf{Input:}}
\renewcommand{\algorithmicensure}{\textbf{Output:}}
\begin{figure}[htb]
\begin{center}
\begin{minipage}{0.85\textwidth}
\begin{algorithm}[H]
\raggedright
  \begin{algorithmic}[1]
    \Require{A matrix $A\in\R^{m\times n}$ with $\rk(A)=m$.}
    \Ensure{A  solution $\bar  y\in \R^m$ to \eqref{eq:dual} satisfying the maximum number
    of strict inequalities.}
\State Compute $\bart:=\bart_A$ as in \eqref{def:delta}.
    \State Set $Q:=I_m$, $R:=I_m$, $U:=I_m$, $T:=[n]$, $r:=m$.
 \While{$T\neq\emptyset$}
    \State Call \Call{von Neumann}{$A,Q,\varepsilon$} to obtain $(x,y)$.\label{l:v-N}
    \If{$A^\T Qy>0$} \Return{$\bar y=UQy$.} {\bf Terminate.}\label{l:dual-feas}
\Else  {\bf\ rescale}
   \[R:=\frac{1}{1+\varepsilon}\left(R+\sum_{i\in T} \frac{x_i}{\|a_i\|_Q^2} a_ia_i^\T\right) ;\quad  Q:=R^{-1}.\]\label{l:rescale}
    \EndIf
 \While{$\exists k\in T$ such that ($\|\hat a_k\|_Q<\bart$) }
\State \Call{Remove}{$k$}.\label{l:short-column}
\EndWhile
 \EndWhile
\Return{$\bar y=0$.}
 \end{algorithmic}
\caption{Maximum Support Image Algorithm} \label{alg:dual-alg-maxsup}
\end{algorithm}

\begin{algorithm}[H]
\raggedright
  \begin{algorithmic}[1]\label{alg:remove-im}
\Procedure{Remove}{$k$}
 \State Select $W\in \R^{r\times (r-1)}$ whose columns form an orthonormal basis of $a_k^\bot$.
 \State Set $A:=W^\T A$, delete all $0$ columns, and remove the corresponding indices from $T$.
 \State Set $R:=W^\T RW$,  $U:=UW$, and $r:=r-1$. Recompute
 $Q=R^{-1}$.
\EndProcedure
 \end{algorithmic}
\caption{Column deletion}
\end{algorithm}
\end{minipage}
\end{center}
\end{figure}

\begin{theorem}\label{thm:complexity dual max support} Let the matrix $A\in
  \Z^{m\times n}$ have $\rk(A)=m$, and encoding length $L$.
Algorithm~\ref{alg:dual-alg-maxsup} finds a maximum support solution to
  $A^\T y\ge 0$ in
  $O\left(m^2n^2\cdot L\right)$  arithmetic
  operations. Using the Smoothed Perceptron of \cite{Pena-Soheili-smooth} or the  Mirror Prox method of \cite{Yu-Karzan-Carbonell} instead of von Neumann requires $O\left(m^3n\sqrt{\log n} \cdot L\right)$
  arithmetic operations.
\end{theorem}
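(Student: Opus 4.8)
The plan is to mirror the amortized analysis of the Maximum Support Kernel Algorithm (Theorem~\ref{thm:complexity primal max support}), combining the per-call guarantees for the Full Support Image Algorithm with an amortized counting of rescalings across the recursive dimension reductions. First I would establish correctness: throughout the algorithm $T \supseteq T^*$, because an index $k$ is removed at line~\ref{l:short-column} only when $\|\hat a_k\|_Q < \bart \le \omega_A$ (using Claim~\ref{cl:omega-bound}, which gives $\omega_A \ge \bart_A$ whenever $T^* \neq \emptyset$), and then Lemma~\ref{lem:lower-width-2} certifies $k \notin T^*$, provided we know $F_A \subseteq E(R)$ at the moment of removal. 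The invariant $F_A \subseteq E(R)$ is exactly Lemma~\ref{lem:F-in}, extended to account for the reduction step: when \textproc{Remove}$(k)$ replaces $A$ by $W^\T A$ (columns $a_i$ with $a_k^\T a_i$ no longer relevant) and $R$ by $W^\T R W$, Lemma~\ref{lem:im-dim-red}(i) shows $F_{A'} = W^\T F_A$ (applied with $H = a_k^\bot$ inside the current ambient space), and $\|W^\T z\|_{W^\T RW} = \|z\|_R$ for $z \in H$, so $F_{A'} \subseteq E(W^\T R W)$ is preserved. By Lemma~\ref{lem:im-dim-red}(iii), $\omega_{A'} \ge \omega_A \ge \bart$, so the same threshold $\bart$ remains valid after every reduction — this is why $\bart := \bart_A$ is computed once at the start. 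Hence upon termination $T = T^*$, and $\bar y = UQy$ pulls the solution back through the accumulated basis changes $U$; when $T = \emptyset$ the maximum support is empty and $\bar y = 0$ is correct.

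Next I would bound the total number of rescalings $K$ across the whole run. A \emph{round} is the set of iterations between two consecutive calls of \textproc{Remove}; since each removal drops $r$ by one, there are at most $m$ rounds. Within a round, $\det(R)$ increases by a factor $\ge 16/9$ per rescaling by Lemma~\ref{lem:dual-vol-dec} (the proof there is insensitive to the ambient dimension, so it applies to the current $r\times r$ matrix $R$), equivalently $\vol_r(E(R))$ shrinks by a factor $\le 3/4$. For a lower bound on $\vol_r(E(R))$ during a round: as long as no column has been removed, every $i \in T$ satisfies $\|\hat a_i\|_Q \ge \bart$, i.e. $\width_{E(R)}(\hat a_i) \ge \bart$ by Lemma~\ref{lemma:ellipsoid width}, and combined with $F_A \subseteq E(R)$ and $F_A$ being full-dimensional in the current space (because $\rk$ of the surviving matrix equals $r$), one gets $\vol_r(E(R)) \ge \bart^{\Theta(r)}\nu_r \ge \bart^{\Theta(m)}\nu_m$. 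At a \textproc{Remove} step the volume may jump back up; the needed bound on this jump is the image-side analogue of Lemma~\ref{lemma:volume after removal}, which I would prove via Lemma~\ref{lemma:ellipsoid and hyperplane}: passing from $E_H(Q)$ to $E_{H'}(Q)$ with $H'$ a hyperplane of $H$ multiplies the (lower-dimensional) volume by $\nu_{r-1}/(\nu_r \cdot \width_{E_H}(\nu))$, and $\width_{E_H}(\nu) \ge \bart/2$ follows because the columns in the surviving set had $Q$-norm $< \bart$ at removal and grow by at most a factor $2$ per subsequent rescaling — wait, on the image side the short columns are the ones \emph{removed}, so the relevant bound comes from $\width_{E(R)}(\hat a_i) = \|\hat a_i\|_Q$ and the fact that $\vol$ is controlled through $\det$: more directly, $\det(W^\T RW) = \det(R)\cdot\|a_k\|_{R^{-1}}^2/\|a_k\|^2$ by Lemma~\ref{lemma:determinant and hyperplane}, and $\|a_k\|_{R^{-1}} = \|\hat a_k\|_Q\|a_k\| < \bart\|a_k\|$, so $\det$ can only \emph{increase} by going to the hyperplane, i.e. $\vol$ can only decrease. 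So in fact the removal step does not hurt the volume bound at all on the image side, which makes this easier than the kernel case. Telescoping $\det$ across all rescalings, anchored by the initial $\det(R)=1$ and the final bound $\vol_r(E(R)) \ge \bart^{\Theta(m)}\nu_r$, yields $K = O(m \log \bart^{-1}) = O(mL)$ using $\bart \ge 2^{-4L}$ from Lemma~\ref{lem:numerical-shit}.

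For the running-time accounting: between two rescalings the von Neumann algorithm runs $O(1/\varepsilon^2) = O(m^2)$ iterations (Lemma~\ref{lemma:VN running time}), each costing $O(n)$ once $A^\T Q A$ is precomputed, for $O(m^2 n)$ per round-segment; recomputing $A^\T Q A$ costs $O(n^2 m)$, updating $R$ costs $O(m^2 n)$, and inverting to get $Q$ costs $O(m^3)$, so $O(n^2 m)$ dominates per rescaling. Over $K = O(mL)$ rescalings this is $O(m^2 n^2 L)$. The \textproc{Remove} bookkeeping — choosing $W$, forming $W^\T A$, $W^\T R W$, $U W$ — costs $O(r^2 n + r^3)$ per call and occurs at most $m$ times, which is absorbed. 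Swapping in the Smoothed Perceptron of~\cite{Pena-Soheili-smooth} or Mirror-Prox of~\cite{Yu-Karzan-Carbonell} changes the inner iteration count to $O(\sqrt{\log n}/\varepsilon) = O(m\sqrt{\log n})$ with $O(mn)$ per iteration, giving $O(m^2 n \sqrt{\log n})$ per rescaling and $O(m^3 n \sqrt{\log n}\cdot L)$ overall. I expect the main obstacle to be the careful amortized volume argument in Claim-analogue for $K$: specifically, making precise that $F_A$ stays full-dimensional in the current coordinate system and that the outer ellipsoid $E(R)$ degrades in a controlled way as the ambient dimension drops — the content of Lemma~\ref{lem:im-dim-red} together with the determinant identity Lemma~\ref{lemma:determinant and hyperplane} — and confirming that the removal steps genuinely cost nothing (rather than a bounded loss as in the kernel case), so that the round structure gives the clean $O(mL)$ bound without an extra additive $m$ term per round.
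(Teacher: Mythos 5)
Your correctness argument and the outer running-time accounting match the paper's, but the heart of the amortized analysis---what happens to the potential $\det(R)$ at a column removal---contains a sign error at exactly the point where the real work lies. By Lemma~\ref{lemma:determinant and hyperplane}, $\det(W^\T R W)=\det(R)\,\|\hat a_k\|_{Q}^{2}$, and the column $a_k$ is removed precisely because $\|\hat a_k\|_Q<\bart<1$. Hence $\det(W^\T R W)<\bart^{2}\det(R)$: the determinant \emph{decreases} (equivalently, the volume of the outer ellipsoid \emph{increases}) at every removal --- the opposite of your conclusion that ``$\det$ can only increase, so the removal step costs nothing.'' A priori $\|\hat a_k\|_Q$ could be arbitrarily small, so without a lower bound on it a single removal could collapse the potential and the $O(mL)$ bound on the number of rescalings would not follow. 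The paper closes this with Lemma~\ref{lem:gamma-form-2}, which maintains the explicit decomposition $R=\alpha I_r+\sum_{i\in T}\gamma_i\hat a_i\hat a_i^\T$ with $\gamma_i\le 2/\bart^{2}$ through both rescalings and removals, and deduces $\|v\|_Q\ge \bart/\sqrt{2(n+1)}$ for every unit vector $v$; this gives Lemma~\ref{lem:remove-kosher}, i.e.\ $\det(R')\ge \det(R)\,\bart^{2}/(2(n+1))$, so the at most $m$ removals cost only an extra $O(m\log(n\bart^{-1}))$ rescalings. This decomposition lemma is the genuinely new ingredient of the max-support image analysis, and your proposal omits it.

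Your within-round lower bound on $\vol_r(E(R))$ is also unsound: you invoke ``$F_A$ being full-dimensional in the current space,'' but $\Sigma_A$ (hence $F_A$) has empty interior whenever $T^*_A\neq[n]$, which is exactly the degenerate regime this theorem addresses; $F_A\subseteq E(R)$ therefore gives no volume lower bound. The paper instead bounds $\det(R)$ from \emph{above}: Lemma~\ref{lem:a-k-Q-bound} (whose proof again rests on the trace identity from the decomposition of $R$) shows that once $\det(R)>(1+\bart^{-2})^{m}$ some column satisfies $\|\hat a_k\|_Q<\bart$ and is removed. Telescoping $\det(R)$ from its initial value $1$, with a factor $16/9$ gain per rescaling, a loss of at most $\bart^{2}/(2(n+1))$ per removal, and this upper bound, yields $K=O(m\log(n\bart^{-1}))=O(mL)$ rescalings; the per-rescaling cost $O(n^{2}m)$ then gives the stated bounds. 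So the route is right in outline, but both potential-tracking steps need to be replaced by the determinant-based argument above.
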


We need the following stronger version of Lemma~\ref{lem:gamma-form}, with explicit
bounds on the coefficients. Note that the dimension $m$ is replaced by the
actual dimension $r$ and the set of columns $[n]$ by $T$.

\begin{lemma}\label{lem:gamma-form-2}
At any stage of the algorithm,  the matrix $R$ is positive definite and can be written in the form
\[
R=\alpha I_r+\sum_{i\in T} \gamma_i \hat{a}_i\hat{a}_i^\T
\]
where $\gamma_i \leq 2/\bart^2$, $\forall i \in T$,
$\alpha=1/(1+\varepsilon)^t$ for the total number of rescalings $t$ performed
thus far, and $\gamma_i\ge 0$. The trace is $\tr(R)=\alpha r+\sum_{i\in
T}\gamma_i$. Furthermore, for any
$v \in \R^r$ with $\|v\|=1$, we have $\|v\|_Q \geq \bart/{\sqrt{2(n+1)}}$.
\end{lemma}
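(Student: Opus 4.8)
The plan is to establish all four assertions — the displayed form of $R$, positive definiteness, the bound $\gamma_i\le 2/\bart^2$, and the trace identity — simultaneously by induction over the operations that modify $R$, namely the rescalings (line~\ref{l:rescale}) and the calls to \textproc{Remove} (line~\ref{l:short-column}); the von Neumann call does not touch $R$, $Q$, $T$ or $r$, so it suffices to check these two steps.

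For the structural part: at initialization $R=I_m$ has the claimed form with $\alpha=1=1/(1+\varepsilon)^0$ and all $\gamma_i=0$. A rescaling replaces $R=\alpha I_r+\sum_{i\in T}\gamma_i\hat a_i\hat a_i^\T$ by $\tfrac1{1+\varepsilon}\bigl(R+\sum_{i\in T}\tfrac{x_i}{\|a_i\|_Q^2}a_ia_i^\T\bigr)$, and since $a_i=\|a_i\|_2\hat a_i$ one checks $\tfrac{x_i}{\|a_i\|_Q^2}a_ia_i^\T=\tfrac{x_i}{\|\hat a_i\|_Q^2}\hat a_i\hat a_i^\T$, so the new matrix equals $\tfrac{\alpha}{1+\varepsilon}I_r+\sum_{i\in T}\tfrac1{1+\varepsilon}\bigl(\gamma_i+\tfrac{x_i}{\|\hat a_i\|_Q^2}\bigr)\hat a_i\hat a_i^\T$ — again of the claimed form, with the exponent of $(1+\varepsilon)$ in $\alpha$ increased by one (matching the updated rescaling count) and with nonnegative coefficients. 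A call to \textproc{Remove}$(k)$ replaces $R$ by $W^\T RW$, with $W\in\R^{r\times(r-1)}$ an orthonormal basis of $a_k^\bot$; using $W^\T W=I_{r-1}$ this is $\alpha I_{r-1}+\sum_i\gamma_i(W^\T\hat a_i)(W^\T\hat a_i)^\T$, and for a surviving column $W^\T\hat a_i=c_i\hat a_i'$, where $\hat a_i'$ normalizes the new column $W^\T a_i$ and $c_i=\|W^\T a_i\|_2/\|a_i\|_2\le1$ because $WW^\T$ is an orthogonal projection; hence the new coefficient is $c_i^2\gamma_i\le\gamma_i$, and $\alpha$ is unchanged since no rescaling takes place. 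Positive definiteness is immediate in both cases (a PSD matrix is added to a PD one and scaled by a positive constant; and $v^\T W^\T RWv=(Wv)^\T R(Wv)>0$ for $v\ne0$). The trace identity follows from $\|\hat a_i\|_2=1$.

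The crux is $\gamma_i\le 2/\bart^2$; note that naively summing the geometric series of increments over all rescalings gives only $O(m/\bart^2)$. Instead I would first record a self-bounding inequality, valid at every stage purely from the form of $R$: for $i\in T$, dropping the nonnegative terms $j\ne i$ gives $R\succeq\alpha I_r+\gamma_i\hat a_i\hat a_i^\T\succ0$, hence $Q=R^{-1}\preceq(\alpha I_r+\gamma_i\hat a_i\hat a_i^\T)^{-1}$ by antitonicity of matrix inversion, and a Sherman--Morrison computation (using $\|\hat a_i\|_2=1$) yields $\hat a_i^\T(\alpha I_r+\gamma_i\hat a_i\hat a_i^\T)^{-1}\hat a_i=1/(\alpha+\gamma_i)$, so $\|\hat a_i\|_Q^2\le 1/(\alpha+\gamma_i)\le 1/\gamma_i$, i.e. $\gamma_i\le 1/\|\hat a_i\|_Q^2$. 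Next use the algorithmic invariant that at the top of the outer loop — hence at initialization and immediately before every rescaling after the first — every $i\in T$ satisfies $\|\hat a_i\|_Q\ge\bart$: this is exactly what the inner removal loop enforces, and at initialization $\|\hat a_i\|_Q=1\ge\bart$. Combining, $\gamma_i\le 1/\bart^2$ at those moments; a rescaling then sends $\gamma_i$ to $\tfrac1{1+\varepsilon}\bigl(\gamma_i+\tfrac{x_i}{\|\hat a_i\|_Q^2}\bigr)\le\tfrac1{1+\varepsilon}\bigl(\tfrac1{\bart^2}+\tfrac1{\bart^2}\bigr)\le 2/\bart^2$ (here $x_i\le\vec e^\T x=1$ for the von Neumann output and $\|\hat a_i\|_Q\ge\bart$ is taken for the $Q$ just before the rescaling); and the removal loop only multiplies each $\gamma_i$ by a factor $c_i^2\le1$. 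So $\gamma_i\le 2/\bart^2$ holds at every stage.

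Finally, for a unit $v\in\R^r$ we have $\|v\|_Q^2=v^\T R^{-1}v\ge 1/\lambda_{\max}(R)\ge 1/\tr(R)$, so it suffices to bound the trace. Using the trace identity together with $\alpha\le1$, $r\le m$, $|T|\le n$ and $\gamma_i\le 2/\bart^2$ gives $\tr(R)\le m+2n/\bart^2$; since $\Delta_A\ge1$ by definition we have $\bart_A\le 1/m^2\le1$, hence $m\le 1/\bart\le 1/\bart^2$, so $\tr(R)\le (2n+1)/\bart^2\le 2(n+1)/\bart^2$ and therefore $\|v\|_Q\ge\bart/\sqrt{2(n+1)}$. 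The main obstacle, as the above makes clear, is getting the constant $2$ in $\gamma_i\le 2/\bart^2$: it rests on the self-bounding inequality $\gamma_i\le 1/\|\hat a_i\|_Q^2$, obtained via operator monotonicity of inversion, combined with the invariant that columns kept in $T$ are never too short in the $Q$-norm.
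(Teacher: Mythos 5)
Your proof is correct and follows essentially the same route as the paper: induction over rescaling and column-removal steps, preservation of the decomposition with $\alpha=1/(1+\varepsilon)^t$ and shrinking coefficients under $W^\T RW$, the key self-bounding inequality $\gamma_i\le 1/\|\hat a_i\|_Q^2$ combined with the invariant $\|\hat a_i\|_Q\ge\bart$ enforced by the removal loop before each rescaling, and a trace-type bound for the final norm estimate. The only cosmetic differences are that you obtain $\gamma_i\le 1/\|\hat a_i\|_Q^2$ via operator antitonicity and Sherman--Morrison where the paper reads it off from the width characterization $\|\hat a_i\|_Q=\width_{E(R)}(\hat a_i)$, and you bound $\|v\|_Q^2\ge 1/\tr(R)$ (which needs the harmless extra observation $m\le\bart^{-2}$, valid for integer input since $\Delta_A\ge 1$) where the paper bounds $x^\T Rx\le(\alpha+\sum_{i\in T}\gamma_i)\|x\|^2$ directly; both yield the stated $\bart/\sqrt{2(n+1)}$.
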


\begin{proof}
Clearly any matrix of the form above is positive definite. The proof is by induction. The formula and bound are valid at initialization
when $R=I_m$ and $\gamma_i = 0 ~ \forall i \in [n]$. Let $R = \alpha I_r + \sum_{i
\in T} \gamma_i \hat{a}_i\hat{a}_i^\T$ denote the current decomposition, where
$\gamma_i \leq 2/\bart^2$. We show that the required form and bounds
hold for the next update.

Assume that we rescale in the current iteration. Let $R$ and $Q$
denote the matrices before, and $R'$ and $Q'$ after the rescaling.
For $i \in [n]$, using Lemma~\ref{lemma:ellipsoid width} we see that
\[
\|\hat{a}_i\|_Q^2 = \width^2_{E(R)}(\hat a_i)=\max \Big\{ ({\hat{a}_i}^\T{x})^2: \alpha \|x\|^2 + \sum_{j
\in T} \gamma_j (\hat{a}_j^\T{x})^2 \leq 1, x \in \R^r\Big\} \leq \frac1{\gamma_i} \text{.}
\]
Now, let $x$ be the convex combination returned by the von Neumann
algorithm in line~\ref{l:v-N}.  By the
rescaling formula in line~\ref{l:rescale}, the matrix $R$ is updated to $R'$ satisfying
\[
R' = \frac{1}{1+\epsilon}\Big(R + \sum_{i \in T} \frac{x_i}{\|a_i\|_Q^2} a_i a_i^\T\Big)
   = \frac{1}{1+\epsilon}\Big(\alpha I_r + \sum_{i \in T} \Big(\gamma_i +
\frac{x_i}{\|\hat a_i\|_Q^2}\Big) \hat{a}_i\hat{a}_i^\T\Big) \text{.}
\]
Hence, recalling that $\|\hat{a}_i\|_Q \geq \bart$ for every $i\in T$ at the beginning of every iteration, each $\gamma_i$ is updated to $\gamma_i'$ satisfying
\[
\gamma_i' = \frac{1}{1+\epsilon}\Big(\gamma_i + \frac{x_i}{\|\hat a_i\|_Q^2}\Big)
\leq \frac{2}{\|\hat a_i\|_Q^2} \leq \frac {2}{\bart^2}\text{.}
\]

Consider now a step when some columns are eliminated. Then the matrices $A$ and
$R$ are updated to $A'$ and $R'$, where $A'$ is obtained by removing the zero
columns from  $W^\top A$ and  $R'=W^\T RW$. We denote by $T'\subseteq T$ the
index set of columns of $A'$. Thus
\[
R'=\alpha W^\T W+\sum_{i\in T'} \gamma_i W^\T \hat{a}_i \hat{a}^\T_i W=\alpha
I_{r-1}+\sum_{i\in T'} \gamma_i\|W^\T \hat{a}_i\|^2 \frac{a'_i
{a'_i}^\T}{\|a'_i\|^2},
\]
where the last equality follows from $W^\T W=I_{r-1}$ and the fact that  $W^\T
a_i=0$ for all $i\in T\sm T'$. Setting $\alpha'=\alpha$ and $\gamma_i'=\gamma_i
\|W^\T \hat{a}_i\|^2$ for $i\in T'$ gives the desired decomposition of $R'$.
Next, since $\|W^\T\hat{a}_i\| \leq \|\hat{a}_i\| \leq 1$, we get that
$\gamma_i' \leq \gamma_i \leq 2/\bart^2$, for all $i \in T'$.

We now prove the last part lower bounding $\|v\|_Q$ for any unit
vector $v\in \R^r$. Firstly, for any $x\in \R^r$, the
Cauchy-Schwarz inequality gives
\[
x^\T R x = \alpha \|x\|^2 + \sum_{i \in T} \gamma_i ({\hat{a}_i}^\top{x})^2
\leq \Big(\alpha + \sum_{i \in T} \gamma_i\Big) \|x\|^2 \text{ ,}
\]
and hence $E(R)$ contains a Euclidean ball of radius at least $1/\sqrt{\alpha +
\sum_{i \in T} \gamma_i}$. Therefore, for any unit vector $v \in
\R^r$, using Lemma~\ref{lemma:ellipsoid width} we get
\[\|v\|_Q = \max \set{{v}^\top{x}: x \in E(R)}
\geq \frac{1}{\sqrt{\alpha + \sum_{i \in T} \gamma_i}}
\geq \frac{1}{\sqrt{1 + 2|T|/\bart^2}}
\geq \frac{\bart}{\sqrt{2(n+1)}} \text{ ,}
\]
as needed.
\qued \end{proof}

The next lemma gives a lower bound on the decrease in $\det(R)$ for
column removal steps.

\begin{lemma}\label{lem:remove-kosher}
Assume that at a given iteration $F_A \subseteq E(R)$, and consider an index $k\in T\setminus T^*$. Let $W\in\R^{r\times (r-1)}$ be a matrix whose columns form an orthonormal basis of $a^\bot_k$. Let $A'$ be the matrix obtained by removing all zero columns from $W^\T A$, and let $R'=W^\T RW$. Then  $F_{A'}\subseteq E(R')$ and $\det(R')\ge \det(R){\bart^2}/{(2(n+1))}$.
\end{lemma}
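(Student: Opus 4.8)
The plan is to treat the two assertions separately, each via a short linear-algebra argument built around the $(r-1)$-frame $W$ (whose columns span $a_k^\bot=\hat a_k^\bot$, and which satisfies $W^\T W=I_{r-1}$).

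For the inclusion $F_{A'}\subseteq E(R')$, I would take an arbitrary $z'\in F_{A'}$ and show $Wz'\in F_A$, so that the hypothesis $F_A\subseteq E(R)$ applies. Indeed $\|Wz'\|=\|z'\|\le 1$ since $W$ has orthonormal columns, and for each index $i$ in the current support either $W^\T a_i$ is a column of $A'$, whence $a_i^\T(Wz')=(W^\T a_i)^\T z'\ge 0$ because $z'\in F_{A'}$, or $W^\T a_i=0$ (in particular for $i=k$), whence $a_i^\T(Wz')=0$. Thus $Wz'\in\Sigma_A\cap\B^r=F_A\subseteq E(R)$, so $\|z'\|_{R'}^2=z'^\T(W^\T R W)z'=\|Wz'\|_R^2\le 1$, i.e.\ $z'\in E(R')$. (Note $R'=W^\T RW$ is positive definite since $W$ has full column rank and $R\succ 0$, so $E(R')$ is a genuine ellipsoid.) Notice this part does not even use $k\notin T^*$.

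For the determinant bound I would use the already-proven projected-determinant lemma. Since $W$ is an orthonormal basis of $H:=\hat a_k^\bot$ and $\|\hat a_k\|=1$, by the very definition of the projected determinant $\det(R')=\det(W^\T RW)=\det_H(R)$, and Lemma~\ref{lemma:determinant and hyperplane} (applied with $d=r$, $a=\hat a_k$) gives $\det_H(R)=\det(R)\,\|\hat a_k\|_{R^{-1}}^2=\det(R)\,\|\hat a_k\|_Q^2$, using $Q=R^{-1}$. Finally, $\hat a_k$ is a unit vector of the current space $\R^r$, so the last statement of Lemma~\ref{lem:gamma-form-2} yields $\|\hat a_k\|_Q\ge \bart/\sqrt{2(n+1)}$, and therefore $\det(R')=\det(R)\,\|\hat a_k\|_Q^2\ge \det(R)\,\bart^2/(2(n+1))$, as claimed.

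The only points requiring a little care are bookkeeping of definitions: that $\det(W^\T RW)$ is exactly $\det_H(R)$ for $H=\hat a_k^\bot$ so that Lemma~\ref{lemma:determinant and hyperplane} applies verbatim, and that the lower bound of Lemma~\ref{lem:gamma-form-2} is stated for every unit vector in the current $\R^r$ and hence covers $\hat a_k$. Beyond that, I expect no genuine obstacle; as an alternative to invoking Lemma~\ref{lemma:determinant and hyperplane}, one can instead complete $W$ to the orthogonal matrix $[\,W\mid \hat a_k\,]$ and read off $\det(R)=\det(R')/\|\hat a_k\|_Q^2$ from the Schur complement of the $(1,1)$-block of $[\,W\mid\hat a_k\,]^\T R\,[\,W\mid\hat a_k\,]$, but using the existing lemma is cleaner.
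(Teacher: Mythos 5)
Your proof is correct and follows essentially the same route as the paper: the determinant bound is obtained identically via Lemma~\ref{lemma:determinant and hyperplane} applied to $H=\hat a_k^\bot$ together with the unit-vector lower bound of Lemma~\ref{lem:gamma-form-2}, while for the inclusion you verify $WF_{A'}\subseteq F_A$ directly instead of citing Lemma~\ref{lem:im-dim-red}(i), which is just the easy direction of that lemma inlined. Your side remark that this inclusion does not need $k\notin T^*$ is accurate (only the reverse containment $F_A\subseteq WF_{A'}$ uses it).
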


\begin{proof}
The first part of the statement follows from the fact that
$F_A\subseteq E_{a_k^\bot} (R)$. Further,
Lemma~\ref{lem:im-dim-red}(i) implies $F_{A'}=W^\top WF_{A'}=W^\top F_A$.  Thus, $F_{A'}=W^\T F_A\subseteq W^\T E _{a_k^\bot} (R)=W^\T W E(R')=E(R')$. For the second part, note that $\det(R')=\det_{a^\bot_k}(R)=\det(R)\|\hat a_k\|^2_Q$ using Lemma~\ref{lemma:determinant
and hyperplane}.  To obtain the desired
bound, we use the estimate $\|\hat a_k\|^2_Q \ge \bart^2/(2(n+1))$ from
Lemma~\ref{lem:gamma-form-2}, which holds since $\hat{a}_k$ is a unit vector.
\qued \end{proof}

We now prove Theorem~\ref{thm:complexity dual max support} based on these
lemmas and the results proved in Section~\ref{sec:dual-full-analysis}.

\begin{proof}[Proof of Theorem~\ref{thm:complexity dual max support}]
We first argue the correctness of the algorithm. Let $A$ be the input matrix, and let $A'$
be the current matrix during any stage of the algorithm. For the current matrix $R$, Lemmas~\ref{lem:F-in} and~\ref{lem:remove-kosher}
ensure that $F_{A'} \subseteq E(R)$. Therefore Lemma~\ref{lem:lower-width-2} implies that $T\supseteq T^*$ throughout the algorithm, so that $\Sigma_A$ is contained
in the subspace  $H:=\{x\in\R^m\st a_i^\T x=0\,\forall i\in[n]\sm T\}$. By construction, the columns of $U$ are an orthonormal basis of $H$,
and $A'$ is obtained from the matrix $U^\T A$ by removing the $0$ columns.

We next show that the solution $\bar y$ returned by the algorithm is
a solution to \eqref{eq:dual} satisfying the maximum number of strict
inequalities.  If $T=\emptyset$ at termination, then $\bar y=0$ is indeed a maximum support
solution. Assume the algorithm terminated at line~\ref{l:dual-feas} with $\bar
y=UQy$. Then, for every $k\in T$ we have $a_k^\T \bar y=a_k^\T UQy=(a_k')^\T Qy>0$, whereas
for every  $k\notin T$ we have $a_k^\T \bar y=a_k^\T UQy=0$ because $U^\T  a_k=0$ by construction.

We now prove that the algorithm terminates in the claimed  number of iterations.
Lemma~\ref{lem:a-k-Q-bound} remains valid; the proof uses
Lemma~\ref{lem:gamma-form-2} in place of Lemma~\ref{lem:gamma-form}. By Lemmas~\ref{lem:a-k-Q-bound} and~\ref{lem:im-dim-red}(iii), whenever $\det(R)> (1+\bart^{-2})^m$ we
can find $k\in T$ such that $\|a\|_Q<\bart$, thus we remove at least one column at step~\ref{l:short-column}. The potential $\det(R)$ is initially 1; by Lemma~\ref{lem:dual-vol-dec} it increases by at least a factor $16/9$ at every rescaling, and by Lemma~\ref{lem:remove-kosher} it decreases by at
most a factor $\frac{\bart^2}{2(n+1)}$ after the elimination of a column.
Since $\rk(A)$ decreases by $1$ every time we remove a column, the
algorithm performs at most $m$ column removals. Consequently, within
$O(m\log(n\bart^{-1}))=O(mL)$ rescalings, all columns outside $T^*$ will be
removed and the algorithm terminates.

As in the proof of Theorem~\ref{thm:main-matrix-dual}, the iterations
between two rescalings can be implemented in time $O(n^2m)$ whereas recomputing $R$ and $Q$ when rescaling requires $O(m^2n)$ operations. This contributes $O(m^2n^2L)$ to the overall running time. When removing a column, computing $W$ requires computing an orthonormal basis of $a_k$ in $\R^r$, which can be done by closed-form-formula in $O(r^2)$ arithmetic operations; computing $WA$ and $WRW$ require $O(m^2n)$ and $O(m^3)$, respectively; recomputing the inverse $Q$ or $R$ requires $O(m^3)$ operations. Hence the total number of arithmetic operations needed for the $O(m)$ column removals is $O(m^3 n)$.  This implies the stated running-time bound.

From the above, following the proof of Theorem~\ref{thm:main-matrix-smoothed} we obtain the running time
bound for the Smoothed Perceptron of \cite{Pena-Soheili-smooth} or the  Mirror Prox method of \cite{Yu-Karzan-Carbonell}.
\qued \end{proof}

\section{Missing proofs}\label{app:proofs}

\rhodef*
\begin{proof} Note that $\rho_A=\tau_{\hat A}$ as defined  in Lemma~\ref{lemma:radius} below, which shows that  $|\rho_A|$ is the distance of $0$ from the relative boundary of $\conv(A)$.
{\bf (i)} By Lemma~\ref{lemma:radius}(ii), $\rho_A<0$ if and only if $0$ is in the relative interior of $\conv(A)$, which is the case if and only if there exists $x>0$ such that $Ax=0$.

{\bf (ii)} For any $\bar y\in\Sigma_A$, $\|\bar y\|=1$, the distance between $\bar y$ and the hyperplane $\{y\st a_j^\T y=0\}$ ($j\in [n]$) is $\hat a_j^\T \bar y$, therefore $\min_{j\in[n]}\hat a_j^\T \bar y$ is the distance of $\bar y$ from the boundary of $\Sigma_A$, that is, the radius of the largest ball centered at $\bar y$ and contained in $\Sigma_A$. The statement now follows from the definition of $\rho_A$.
\qued \end{proof}

\begin{lemma}\label{lemma:radius} Let $A\in\R^{m\times n}$. Let $p$ be a point of minimum norm in the relative boundary of $\conv(A)$. Define \[\tau_A\eqdef\max_{y\in\im(A)\setminus\{0\}} \min_{z\in\conv(A)}z^\T   \hat y.\]
\begin{enumerate}[(i)]
\item If $0\notin \conv(A)$, then $\|p\|=\tau_A=\min_{j\in[n]} a^\T_j \hat p$.
\item If $0$ is in the relative interior of $\conv(A)$, then $p$ is in the relative interior of some facet of $\conv(A)$ and $\|p\|=-\tau_A=\max_{j\in[n]} a^\T_j \hat p$.
\end{enumerate}
\end{lemma}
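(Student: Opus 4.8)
The plan is to prove both parts with one toolkit: the variational characterization of the nearest point of a convex set, the fact that a linear functional is minimized over $\conv(A)$ at one of the columns (so $\min_{z\in\conv(A)}z^\T\hat y=\min_{j\in[n]}a_j^\T\hat y$ and hence $\tau_A=\max_{y\in\im(A)\smz}\min_{j\in[n]}a_j^\T\hat y$), and --- for part (ii) --- the facet description of a polytope. I read ``relative boundary of $\conv(A)$'' as the boundary of $\conv(A)$ inside the subspace $\im(A)$; this is the reading that makes the statement correct, and in case (i) it just amounts to the fact that the nearest point of $\conv(A)$ to the origin lies on it. The hard part will be the facet-location step in (ii): showing that a minimum-norm boundary point is relatively interior to a facet whose outer normal points straight at the origin. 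A direct normal-cone dimension count runs into the difficulty that $p$ need not a priori be relatively interior to the facets containing it, so instead I would argue via minimizing the facet offsets $\beta_F$ together with the equality case of Cauchy--Schwarz.

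For (i), let $p$ be the nearest point of $\conv(A)$ to $0$; since $0\notin\conv(A)$ we have $p\neq0$, and $p$ cannot be relatively interior (scaling $p$ towards $0$ inside $\im(A)$ would shorten it), so $p$ is the minimum-norm point of the relative boundary. First I would use the projection inequality $p^\T z\ge\|p\|^2$ for all $z\in\conv(A)$: applied to $z=a_j$ it gives $a_j^\T\hat p\ge\|p\|$ for every $j$, so $\min_j a_j^\T\hat p\ge\|p\|$; writing $p=\sum_j\lambda_j a_j$ as a convex combination, $\|p\|^2=\sum_j\lambda_j\,a_j^\T p\ge\|p\|^2$ forces $a_j^\T p=\|p\|^2$ whenever $\lambda_j>0$, so $\min_j a_j^\T\hat p=\|p\|$. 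For $\tau_A$, taking $y=p$ gives $\tau_A\ge\min_j a_j^\T\hat p=\|p\|$, while for any $y\neq0$ we have $\min_j a_j^\T\hat y=\min_{z\in\conv(A)}z^\T\hat y\le p^\T\hat y\le\|p\|$ by Cauchy--Schwarz; hence $\tau_A=\|p\|$ and the three quantities coincide.

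For (ii), $0\in\relint(\conv(A))$ forces $\aff(\conv(A))=\im(A)$, of dimension $r:=\rk(A)$ (I may assume $r\ge1$, else $A=0$ and the statement is vacuous); let $p$ be a minimum-norm point of the relative boundary, so $p\neq0$. For each facet $F$, write $c_F\in\im(A)$ for its outer unit normal and $\beta_F=\max_{z\in\conv(A)}c_F^\T z$; since $0$ lies strictly inside every facet hyperplane, $\beta_F>0$, and $\conv(A)=\{z\in\im(A):c_F^\T z\le\beta_F\text{ for all }F\}$. Let $F^*$ minimize $\beta_F$. Then $\beta_{F^*}c_{F^*}$ satisfies every facet inequality (because $c_F^\T c_{F^*}\le1$ and $\beta_{F^*}\le\beta_F$), hence lies in $F^*$ and so on the relative boundary, giving $\|p\|\le\beta_{F^*}$. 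Conversely $p$ lies in some facet $F'$, so $\beta_{F'}=c_{F'}^\T p\le\|p\|$; combining, $\beta_{F^*}=\|p\|=\beta_{F'}$ and the equality $c_{F'}^\T p=\|c_{F'}\|\,\|p\|$ in Cauchy--Schwarz forces $c_{F'}=\hat p$. The same argument applies to every facet containing $p$, so they all have normal $\hat p$; since distinct facets have distinct normals, $p$ lies in a unique facet $F$, hence in its relative interior (a point on the relative boundary of $F$ would lie in a lower face, hence in at least two facets of $\conv(A)$). This also shows $\conv(A)\subseteq\{z:\hat p^\T z\le\beta_F=\|p\|\}$.

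With the facet $F$ in hand, $\max_j a_j^\T\hat p=\|p\|$: every $a_j\in\conv(A)$ gives at most $\|p\|$, and a vertex of $F$ --- necessarily one of the columns --- attains $\|p\|$. For $\tau_A$: taking $y=-p$ gives $\min_j a_j^\T\hat y=\min_j a_j^\T(-\hat p)=-\max_j a_j^\T\hat p=-\|p\|$, so $\tau_A\ge-\|p\|$; and for any $y\neq0$, the ray $\{-t\hat y:t\ge0\}$ starts at the relatively interior point $0$ and leaves the compact set $\conv(A)$ at some $t^*>0$ with $-t^*\hat y$ on the relative boundary, so $t^*\ge\|p\|$ and $\min_{z\in\conv(A)}z^\T\hat y\le(-t^*\hat y)^\T\hat y=-t^*\le-\|p\|$, giving $\tau_A\le-\|p\|$. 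Hence $\tau_A=-\|p\|=-\max_j a_j^\T\hat p$, completing (ii). Apart from the facet step, the remaining ingredients are routine: projection geometry, Cauchy--Schwarz, and the standard facet description of polytopes.
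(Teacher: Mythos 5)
Your proof is correct, and part (i) is essentially the paper's argument (projection inequality $p^\T z\ge\|p\|^2$ plus Cauchy--Schwarz for the reverse bound). Part (ii), however, is organized along a genuinely different route. The paper first proves $\tau_A\le-\|p\|$ via the inscribed ball (for any unit $y\in\im(A)$ the point $-\|p\|y$ lies in $\conv(A)$), then takes an \emph{arbitrary} facet $F$ containing $p$, lets $q$ be the minimum-norm point of its affine hyperplane, obtains $\tau_A\ge-\|q\|\ge-\|p\|$, and squeezes: equality forces $p=q$, from which uniqueness of the facet and the relative-interior claim fall out as byproducts of the $\tau_A$ computation. You instead decouple the facet-location claim from $\tau_A$ entirely: you minimize the facet offsets $\beta_F$, show $\beta_{F^*}c_{F^*}$ satisfies all facet inequalities so that $\|p\|=\min_F\beta_F$, and use the equality case of Cauchy--Schwarz to identify $c_{F'}=\hat p$ for every facet through $p$, getting uniqueness and $\max_j a_j^\T\hat p=\|p\|$ before touching $\tau_A$; the bounds on $\tau_A$ then follow from $y=-p$ and a ray-exit argument replacing the paper's inscribed-ball step. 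What your version buys is a self-contained geometric statement about the minimum-norm boundary point of a polytope with $0$ in its relative interior, plus explicit verification of the claims ($\min_j a_j^\T\hat p=\|p\|$ in (i), $\max_j a_j^\T\hat p=\|p\|$ and the ``unique facet implies relative interior'' step in (ii)) that the paper leaves implicit; the paper's version is shorter because the squeeze does double duty. Your reading of ``relative boundary'' as boundary within $\im(A)$ is indeed the one required for the lemma (and for its use in establishing $\rho_A=\tau_{\hat A}$), and flagging it is a point in your favor.
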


\begin{proof} {\bf (i)} Assume $0\notin\conv(A)$. Then $p$ is a point of minimum norm in $\conv(A)$. It follows that $p^\T z\geq \|p\|^2$ for every $z\in\conv(A)$, implying that $\|p\|\leq \tau_A$.
We now show that $\tau_A\leq \|p\|$. If not, then there exists $y\in\im(A)$ such that $\|y\|=1$ and $\min_{j\in [n]} a_j^\T y>\|p\|$. In particular, this implies that every point in $\conv(A)$ has distance greater than $\|p\|$ from the origin, contradicting our choice of $p\in\conv(A)$.
\medskip

{\bf (ii)}  Assume $0$ is in the relative interior of $\conv(A)$.
By our choice of $p$, for any $y\in\im(A)$, $\|y\|=1$, we have $z:=-\|p\|y\in\conv(A)$ and $z^\T y=-\|p\|$,
which implies that  $\tau_A\leq -\|p\|$.
For the other direction, consider any facet $F$ of $\conv(A)$ containing $p$, let $H$ be the affine hyperplane of $\im(A)$ containing $F$,
and let $q$ be the minimum norm point in $H$.
Since $p\in F\subseteq H$,  by definition $\|q\|\leq \|p\|$.
Since $F$ is a facet,  $q^\T z\leq \|q\|^2$ is a defining inequality for $F$ (i.e. it is verified by all $z\in \conv(A)$
and satisfied as equality by all $z\in F$). If we let $y=-q$, this shows $\tau_A\geq \min_{z\in\conv(A)}\hat y z\geq -\|q\|\geq -\|p\|$.
This shows that $p=q$ and $\tau_A=-\|p\|$. In particular, $F$ must be the only facet containing $p$, therefore $p$ is in the relative interior of $F$.
\qued \end{proof}

\begin{Claim}\label{lemma:width estimate}
Let $A\in\Z^{m\times n}$. If $T_A^*\neq\emptyset$, then
\[\displaystyle\max_{y\in \Sigma_A\sm\{0\}}\min_{j\in T^*} a_j^\T \hat y\geq \frac{1}{m^2 \Delta_A}.\]
\end{Claim}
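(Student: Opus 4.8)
The plan is to exhibit a single vector $\bar y\in\Sigma_A\sm\{0\}$ with $a_j^\T\bar y\ge 1$ for every $j\in T^*$ and $\|\bar y\|\le m\Delta_A$; then $\min_{j\in T^*} a_j^\T(\bar y/\|\bar y\|)\ge 1/\|\bar y\|\ge 1/(m\Delta_A)\ge 1/(m^2\Delta_A)$, which is exactly the asserted inequality (using $m\ge 1$). I will look for $\bar y$ among the vertices of the polyhedron
\[
\mathcal P\eqdef\{y\in\im(A)\st a_j^\T y\ge 1\ \forall j\in T^*,\quad a_j^\T y=0\ \forall j\notin T^*\}.
\]
First I would check that $\mathcal P\neq\emptyset$. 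Since $T^*=\supp(\im(A^\T)_+)$ is the inclusionwise maximal support, there is $y^*$ with $A^\T y^*\ge 0$ and $\supp(A^\T y^*)=T^*$; replacing $y^*$ by its orthogonal projection onto $\im(A)$ (which does not change $A^\T y^*$, as every column of $A$ lies in $\im(A)$) and rescaling so that $\min_{j\in T^*} a_j^\T y^*=1$ lands in $\mathcal P$. Observe $\mathcal P\subseteq\Sigma_A$ and $0\notin\mathcal P$.

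Second I would produce a vertex of $\mathcal P$. The recession cone of $\mathcal P$ has lineality space $\{z\in\im(A)\st A^\T z=0\}=\ker(A^\T)\cap\im(A)=\{0\}$, so $\mathcal P$ contains no line and hence has a vertex $\bar y$. Working inside the $r$-dimensional subspace $\im(A)$, where $r=\rk(A)$, such a vertex is pinned down by $r$ linearly independent active constraints taken from $\{a_j^\T y\ge 1:j\in T^*\}\cup\{a_j^\T y=0:j\notin T^*\}$; hence there is a set $B\subseteq[n]$ with $|B|=r$, the vectors $\{a_j\st j\in B\}$ linearly independent (so a basis of $\im(A)$), and $a_j^\T\bar y=c_j$ for $j\in B$ where each $c_j\in\{0,1\}$. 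Since $\bar y\in\im(A)=\spn\{a_j\st j\in B\}$ we may write $\bar y=A_B\beta$; then $A_B^\T A_B\beta=c_B$, so $\beta=(A_B^\T A_B)^{-1}c_B$ and $\|\bar y\|^2=\beta^\T A_B^\T A_B\beta=c_B^\T(A_B^\T A_B)^{-1}c_B$.

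The main point is to bound the entries of $(A_B^\T A_B)^{-1}$. By Cramer's rule each such entry is, up to sign, an $(r-1)\times(r-1)$ minor of the Gram matrix $A_B^\T A_B$ divided by $\det(A_B^\T A_B)$. The denominator is a positive integer, since $A$ has integer entries and $A_B$ has linearly independent columns, hence it is $\ge 1$. Each minor equals $\pm\det(A_C^\T A_{C'})$ for suitable $C,C'\subseteq B$ with $|C|=|C'|=r-1$, and by the Cauchy--Schwarz inequality for determinants together with Hadamard's inequality,
\[
|\det(A_C^\T A_{C'})|\le\sqrt{\det(A_C^\T A_C)}\,\sqrt{\det(A_{C'}^\T A_{C'})}\le\Big(\prod_{j\in C}\|a_j\|\Big)\Big(\prod_{j\in C'}\|a_j\|\Big)\le\Delta_A^2,
\]
using that every subset of $B$ is an independent set of the linear matroid of $A$, so its column-norm product is at most $\Delta_A$. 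Therefore every entry of $(A_B^\T A_B)^{-1}$ is at most $\Delta_A^2$ in absolute value, and $\|\bar y\|^2=c_B^\T(A_B^\T A_B)^{-1}c_B\le\|c_B\|_1^2\,\Delta_A^2\le r^2\Delta_A^2\le m^2\Delta_A^2$, i.e.\ $\|\bar y\|\le m\Delta_A$. The proof then concludes as indicated in the first paragraph. I expect this determinant estimate to be the only subtle step: nonemptiness of $\mathcal P$ and the existence of a vertex are routine, and the restriction to $\im(A)$ serves only to keep the working dimension equal to $r$ (one could instead assume $\rk(A)=m$, which changes neither side of the claim).
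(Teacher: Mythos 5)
Your proof is correct, and it takes a genuinely different (though related in spirit) route from the paper's. The paper introduces the auxiliary LP $\min\{\vec{e}^\T s \st A_{T^*}^\T y\ge \vec{e},\ A_{S^*}^\T y=0,\ s\pm y\ge 0\}$, takes a basic optimal solution $(y^*,s^*)$, and bounds $\|y^*\|_1\le m^2\Delta_A$ by applying Cramer's rule to the extended constraint matrix together with Hadamard's bound $|\det(A')|\le\Delta_A$ for square submatrices of $A$; the conclusion then follows from $\min_{j\in T^*}a_j^\T\hat y^*\ge 1/\|y^*\|_1$. You instead dispense with the auxiliary variables: you take a vertex $\bar y$ of $\{y\in\im(A)\st A_{T^*}^\T y\ge\vec e,\ A_{S^*}^\T y=0\}$ (your nonemptiness and no-lineality arguments are fine, and since all $a_j\in\im(A)$ the vertex is indeed cut out by $r=\rk(A)$ linearly independent columns), and you bound $\|\bar y\|_2$ directly through $\|\bar y\|^2=c_B^\T(A_B^\T A_B)^{-1}c_B$, controlling the entries of the inverse Gram matrix via integrality of $\det(A_B^\T A_B)\ge 1$, Cauchy--Binet/Cauchy--Schwarz for $|\det(A_C^\T A_{C'})|$, and Hadamard, using that every subset of $B$ lies in $\mathcal{B}$ so its norm product is at most $\Delta_A$. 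What each approach buys: the paper's argument is more elementary (only Cramer plus Hadamard on submatrices of $A$ itself, no Gram-determinant machinery) and matches the stated constant $1/(m^2\Delta_A)$ directly; yours avoids the $\ell_1$ detour and the doubling of variables, works intrinsically in $\im(A)$, and yields the slightly stronger bound $1/(r\Delta_A)\ge 1/(m\Delta_A)$, at the price of invoking the Cauchy--Binet inequality for rectangular Gram determinants. Both uses of integrality of $A$ (the paper's integrality of subdeterminants, your $\det(A_B^\T A_B)\ge 1$) are essential and correctly placed.
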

\begin{proof}
Let $S^*:=S_A^*$ and $T^*:=T^*_A$. Let $(y^*,s^*)\in \R^{2m}$ be an optimal basic solution of the following linear program:
\begin{equation}\label{eq:1-norm y}
\begin{array}{rrcl}
\min & \vec{e}^\T s\\
&A^\T_{T^*}y&\geq &\vec{e}\\
&A^\T_{S^*}y &=&0\\
&s-y&\geq &0\\
&s+y&\geq &0
\end{array}
\end{equation}
This LP is feasible by the definition of $T^*$ and the optimal value
equals $\|y^*\|_1$.
Note that, for every square submatrix $A'$ of $A$, $|\det(A')|\leq \Delta_{A'}\leq \Delta_A$, where the first inequality follows from Hadamard's bound,
and the second from the fact that the entries of $A$ are integer.
From this fact, a straightforward application of Cramer's rule implies that  $s^*_j\leq m\Delta_A$ for all $j\in [n]$,
so $\|y^*\|_1\leq m^2\Delta_A$. Since by construction $y^*\in \Sigma_A\sm\{0\}$,
the statement follows from the fact that
\[ \min_{j\in T^*} a_j^\T \frac{y^*}{\|y^*\|_2}\geq\frac{1}{\|y^*\|_2}\geq \frac{1}{\|y^*\|_1}\geq \frac{1}{m^2 \Delta_A}.\]
\end{proof}

\numerical*

\begin{proof}
Let $\alpha:=\max_{i\in T^*}\|a_i\|$. Note that $|\rho_A|\geq |\tau_A|/\alpha$, where $\tau_A$ is defined as in Lemma~\ref{lemma:radius}. Since $\alpha\leq \Delta_A$, it suffices to show that $|\tau_A|\geq 1/(m^2\Delta_A)$.

If $0\notin\conv(A)$, then $T^*=[n]$ and we observe that $\tau_A=\displaystyle\max_{y\in \Sigma_A\sm\{0\}}\min_{j\in [n]} a_j^\T \hat y\geq \frac{1}{m^2 \Delta_A}$, where the inequality follows from  Claim~\ref{lemma:width estimate}. Assume now that $0$ is in the relative interior of $\conv(A)$. Let $p$ be a point of minimum norm in the relative boundary of $\conv(A)$. According to Lemma~\ref{lemma:radius}, $|\tau_A|=\|p\|$, and $p$ is contained in the relative interior of a facet $F$ of $\conv(A)$. Let $A'$ be the submatrix of $A$ comprised of the columns that are contained in $F$. In particular, $\conv(A')=F$, therefore $0\notin\conv(A')$, which implies $\tau_{A'}>0$. By the previous argument, $\tau_{A'}\geq 1/(m^2\Delta_{A'})\geq 1/(m^2\Delta_A)$. Since $p$ is the point of minimum norm in $F$, it follows from Lemma~\ref{lemma:radius} that $\tau_{A'}=\|p\|$. It follows that $|\tau_A|=\tau_{A'}\geq 1/(m^2\Delta_{A'})$.

Finally, since $\Delta_A\leq 2^L$ (see for example \cite[Lemma 1.3.3]{glsbook}) and $m\leq L$, it follows that $1/(m^2\Delta^2_A)\geq 2^{-4L}$.
\qued \end{proof}

\symmetrized*
\begin{proof}
We first show $P_A=P_{A_{S^*}}$. The inclusion $P_{A_{S^*}}\subseteq P_A$ is obvious. For the reverse inclusion, consider $y\in P_A$ and let $x,z\in\R^n_+$ such that $\vec{e}^\T x=\vec{e}^\T z=1$ and $y=\hat Ax=-\hat Az$. Then $\hat A(x+z)=0$, $x+z\geq 0$, which implies $x_i=z_i=0$ for all $i\in[n]\sm {S^*}$, which shows that $y\in P_{A_{S^*}}$.

We show $\spn(P_{A})=\im(A_{S^*})$. It suffices to show that
$\spn(P_{A_{S^*}})=\im(A_{S^*})$ because $P_A=P_{A_{S^*}}$. The inclusion
$\spn(P_{A_{S^*}})\subseteq \im(A_{S^*})$  is obvious. For the reverse inclusion, it suffices to show that, for every $i\in S$, there exists $\alpha\neq 0$ such that $\alpha a_i\in P_{A_{S^*}}$. Consider $\lambda\in\R^{|{S^*}|}_{++}$ such that $\hat A_{S^*} \lambda=0$, and assume without loss of generality that $\sum_{j\in {S^*}\sm \{i\}}\lambda_j=1$. Then $-\lambda_i \hat a_i=\sum_{j\in {S^*}\sm \{i\}}\lambda_j \hat a_j$, which implies $-\lambda_i \hat a_i\in P_{A_{S^*}}$.
\qued \end{proof}

\omegab*
\begin{proof}
First, observe that \[\omega_A=\min_{j\in T^*} \max_{y\in \Sigma_A\sm\{0\}}\hat a_j^\T \hat y\geq \max_{y\in \Sigma_A\sm\{0\}}\min_{j\in T^*}\hat a_j^\T \hat y\eqdef\eta_A.\]
Note that, if $T^*=[n]$, then $\eta_A=\rho_A$, which proves the first part of the statement. For the second part of the statement, assume that $A$ has integer entries and that $T^*\neq\emptyset$. Letting $\alpha:=\max_{i\in T^*}\|a_i\|$, we have
\[\eta_A\geq \alpha^{-1} \max_{y\in \Sigma_A\sm\{0\}}\min_{j\in T^*} a_j^\T \hat y\geq \frac{1}{m^2 \Delta^2_A}=\theta_A,\]
where the last inequality follows from $\alpha\leq \Delta_A$ and Claim~\ref{lemma:width estimate}.
\end{proof}

%%% Local Variables:
%%% mode: latex
%%% TeX-master: "Coordinate_descent"
%%% End:

\end{document}